\font\pppppcarac=ptmr8y at 5pt
\font\ppppcarac=ptmr8y at 6pt
\font\pcarac=ptmr8y at 9pt
\font\Ppcarac=ptmr8y at 10pt
\font\bf=ptmb8y at 10pt
\newcommand{\bfC}{{\bm{C}}}
\newcommand{\bfL}{{\bm{L}}}
\newcommand{\bfS}{{\bm{S}}}
\newcommand{\bfU}{{\bm{U}}}
\newcommand{\bfW}{{\bm{W}}}
\newcommand{\bfX}{{\bm{X}}}
\newcommand{\bfY}{{\bm{Y}}}
\newcommand{\bfZ}{{\bm{Z}}}
\newcommand{\bfzero}{{ \hbox{\bf 0} }}
\newcommand{\bfk}{{\bm{k}}}
\newcommand{\bfs}{{\bm{s}}}
\newcommand{\bfu}{{\bm{u}}}
\newcommand{\bfv}{{\bm{v}}}
\newcommand{\bfw}{{\bm{w}}}
\newcommand{\bfx}{{\bm{x}}}
\newcommand{\bfy}{{\bm{y}}}
\newcommand{\bfz}{{\bm{z}}}
\newcommand{\bfLambda}{{\bm{\Lambda}}}
\newcommand{\bfPhi}{{\bm{\Phi}}}
\newcommand{\bfXi}{{\bm{\Xi}}}
\newcommand{\bfbeta}{{\bm{\beta}}}
\newcommand{\bflambda}{{\bm{\lambda}}}
\newcommand{\bfomega}{{\bm{\omega}}}
\newcommand{\bfvarphi}{{\bm{\varphi}}}
\newcommand{\bftau}{{\bm{\tau}}}
\newcommand{\bfxi}{{\bm{\xi}}}
\newcommand{\bfzeta}{{\bm{\zeta}}}
\newcommand{\CC}{{\mathbb{C}}}
\newcommand{\HH}{{\mathbb{H}}}
\newcommand{\KK}{{\mathbb{K}}}
\newcommand{\LL}{{\mathbb{L}}}
\newcommand{\MM}{{\mathbb{M}}}
\newcommand{\NN}{{\mathbb{N}}}
\newcommand{\RR}{{\mathbb{R}}}
\newcommand{\cc}{{\mathbb{c}}}
\newcommand{\GammaGamma}{{\mathbb{\Gamma}}}
\newcommand{\gammagamma}{{\mathbb{\gamma}}}
\DeclareMathAlphabet{\mathonebb}{U}{bbold}{m}{n}
\def\11{{\ensuremath{\mathonebb{1}}}}
\newcommand{\curB}{{\mathcal{B}}}
\newcommand{\curC}{{\mathcal{C}}}
\newcommand{\curG}{{\mathcal{G}}}
\newcommand{\curL}{{\mathcal{L}}}
\newcommand{\curN}{{\mathcal{N}}}
\newcommand{\curP}{{\mathcal{P}}}
\newcommand{\curS}{{\mathcal{S}}}
\newcommand{\curT}{{\mathcal{T}}}
\newcommand{\bfcurS}{ {\boldsymbol{{\mathcal{S}}}} }
\newcommand{\peff}{{\hbox{{\ppppcarac eff}}}}
\newcommand{\sgn}{\hbox{{\Ppcarac sgn}}\,}
\newcommand{\pmin}{{\hbox{{\ppppcarac min}}}}
\newcommand{\pmax}{{\hbox{{\ppppcarac max}}}}
\newcommand{\tr}{{\hbox{{\textrm tr}}}}
\newcommand{\psim}{{\hbox{{\ppppcarac sim}}}}
\newcommand{\ppsim}{{\hbox{{\pppppcarac sim}}}}
\newcommand{\pconv}{\hbox{{\pcarac conv}}\,}
\newcommand{\unc}{{\hbox{{\ppppcarac unc}}}}
\newdefinition{definition}{Definition}
\newtheorem{lemma}{Lemma}
\newtheorem{proposition}{Proposition}
\newtheorem{corollary}{Corollary}
\newproof{proof}{Proof}
\newdefinition{remark}{Remark}
\newdefinition{hypothesis}{Hypothesis}
\newdefinition{notation}{Notation}
\newproof{example}{Example}
\numberwithin{equation}{section}
\journal{arXiv}
\begin{document}

\begin{frontmatter}
\title{Stochastic elliptic operators defined by non-gaussian random fields\\
 with uncertain spectrum}

\author[1]{C. Soize \corref{cor1}}
\ead{christian.soize@univ-eiffel.fr}

\cortext[cor1]{Corresponding author: C. Soize, christian.soize@univ-eiffel.fr}
\address[1]{Universit\'e Gustave Eiffel, MSME UMR 8208 CNRS, 5 bd Descartes, 77454 Marne-la-Vall\'ee, France}

\begin{abstract}
This paper present a construction and the analysis of a class of non-Gaussian positive-definite matrix-valued homogeneous random fields
with uncertain spectral measure for stochastic elliptic operators. Then the stochastic elliptic boundary value problem in a bounded domain of the 3D-space is introduced and analyzed for stochastic homogenization.
\end{abstract}

\begin{keyword}
Non-Gaussian random field \sep uncertain spectral measure \sep stochastic elliptic boundary value problem \sep stochastic homogenization \sep random effective elasticity tensor
\end{keyword}

\end{frontmatter}

\section{Introduction}
\label{sec:Section1}
Random fields theory has extensively been developed \cite{Yadrenko1983,Rozanov1998,Adler2010,Vanmarcke2010,Rosenblatt2012}, in particular in the context of continuum physics \cite{Ostoja1998,Soize2017b,Malyarenko2018}.
The framework of this paper is that of the analysis of the stochastic homogenization of a 3D-linear anisotropic elastic random medium. The elasticity field is modeled by a Non-Gaussian positive-definite fourth-order tensor-valued homogeneous random field. This paper present an extension of the works \cite{Soize2006,Nouy2014,Soize2017e,Soize2017b} devoted to random field representations for stochastic elliptic boundary value problems and stochastic homogenization. We propose a novel probabilistic modeling  to take into account uncertainties in the spectral measure of the elasticity random  field and we analyze the stochastic elliptic boundary value problem (BVP) that has to be solved to perform the stochastic homogenization.\\

\noindent{\textbf{Notations}

\noindent The following notations are used:\\
$x$: lower-case Latin or Greek letters are deterministic real variables.\\
$\bfx$: boldface lower-case Latin, Greek, and calligraphic letters are deterministic vectors.\\
$X$: upper-case Latin, Greek, and calligraphic letters are real-valued random variables.\\
$\bfX$: boldface upper-case Latin or Greek letters are vector-valued random variables.\\
$[x]$: lower-case Latin of Greek letters between brackets are deterministic matrices.\\
$[\bfX]$: boldface upper-case letters between brackets are matrix-valued random variables.\\
$\curC_w,\curC_y,\curC_z,\curC_\xi,\curC_\varphi,\curC_\curS$:  set of values for $\bfw,[y],\bfz,\bfxi,\bfvarphi,\bfcurS$.\\
$\bfcurS$: vector-valued deterministic parameter that controls spectrum uncertainties.\\
$\CC$: fourth-order tensor-valued random field.\\
$\NN$, $\RR$: set of all the integers $\{0,1,2,\ldots\}$, set of all the real numbers.\\
$\RR^n$: Euclidean vector space on $\RR$ of dimension $n$.\\
$\MM_{n,m}$: set of all the $(n\times m)$ real matrices.\\
$\MM_n$: set of all the square $(n\times n)$ real matrices.\\
$\MM_n^S$: set of all the symmetric $(n\times n)$ real matrices.\\
$\MM_n^+$: set of all the positive-definite symmetric $(n\times n)$ real matrices.\\
$[I_{n}]$: identity matrix in $\MM_n$.\\
$\bfx = (x_1,\ldots,x_n)$: point in $\RR^n$.\\
$\langle \bfx,\bfy \rangle_2 = x_1 y_1 + \ldots + x_n y_n$: inner product in $\RR^n$.\\
$\Vert\bfx\Vert_2$:  norm in $\RR^n$ such that $\Vert\bfx\Vert^2 = \langle \bfx,\bfx \rangle_2$.\\
$\Vert\,[a]\,\Vert_2 \, = \, \sup_\bfx\{ \Vert[a]\bfx\Vert_2 /\Vert\bfx\Vert_2\}$ for $[a]\in\MM_n$ and $\bfx\in\RR^n$.\\
$[x]^T$: transpose of matrix $[x]$.\\
$\tr \{[x]\}$: trace of the square matrix $[x]$.\\
$\langle[x] , [y] \rangle_F = \tr \{[x]^T \, [y]\}$, inner product of matrices $[x]$ and $[y]$ in $\MM_{n,m}$.\\
$\Vert\, [x]\, \Vert_F$: Frobenius norm of matrix  $[x]$ such that  $\Vert [x] \Vert^2_F = \langle [x] , [x]\rangle_F$.\\
$\11_B$: indicatrix function of set $B$.\\
$\iota$: imaginary unit.\\
$\delta_{kk'}$: Kronecker's symbol.\\
$\delta_{\bfx_0}$: Dirac measure at point $\bfx_0$.\\
$a.s$: almost surely.\\
$E$: mathematical expectation.\\

\section{Non-Gaussian random field with uncertain spectral measure}
\label{sec:Section2}
\noindent \textit{Physical framework of the considered random fields class}.
For stochastic homogenization of linear elastic heterogeneous media presented in Section~\ref{sec:Section5}, the physical space $\RR^3$ is referred to a Cartesian reference system for which the generic point is $\bfx=(x_1,x_2,x_3)$. Nevertheless, all the developments presented in Sections~\ref{sec:Section2} to \ref{sec:Section4}, can easily be adapted to any finite dimension greater or equal to $1$. We consider a linear elastic heterogeneous medium for which the  elasticity field is a non-Gaussian fourth-order tensor-valued random field $\widetilde\CC =\{\widetilde\CC_{ijpq}\}_{ijpq}$ with $i$, $j$, $p$, and $q$ in $\{1,2,3\}$.
A general probabilistic construction has been proposed in \cite{Guilleminot2013,Guilleminot2013a,Soize2017b} in order to take into account the material symmetry in a given symmetry class for the mean value of the elasticity random field and considering the statistical fluctuations either in the same symmetry class, or in another symmetry class, or in a mixture of two symmetry classes.
In this paper, we start the construction of the random field with the initial formulation proposed in \cite{Soize2006}. It is thus assumed that the mean value of the elasticity random field $\underline{\widetilde\CC}$ is independent of $\bfx$ and belongs to any symmetry class. The statistical fluctuations of $\widetilde\CC$ around $\underline{\widetilde\CC}$ are assumed to be anisotropic and  statistically homogeneous in $\RR^3$ (it should be noted that the developments presented could be extended to a more general case of material symmetry for the statistical fluctuations but would greatly complicate the presentation). An important quantity that controls the statistical fluctuations is the spectral measure that allows the spatial correlation structure to be described (see for instance \cite{Skorokhod1973,Kree1986,Vanmarcke2010,Leonenko2013,Malyarenko2014}) and that we will assumed to be uncertain in this paper.\\

\noindent \textit{Principle of construction of the uncertain spectral measure}.
The uncertainties are modeled using the probability theory. A parameterization of the spectral measure, involving a parameter $\bfcurS$, is introduced. The uncertain spectral measure is obtained by modeling $\bfcurS$ by a random variable $\bfS$. We then construct a non-Gaussian positive-definite fourth-order tensor-valued homogeneous random field $\CC(\cdot;\bfcurS)$ parameterized with $\bfcurS$ such that $\widetilde\CC = \CC(\cdot;\bfS)$. Throughout the paper the quantities surmounted by a tilde correspond to the case of uncertain spectral measure modeled by a random spectral measure.\\

\noindent \textit{Non-Gaussian positive-definite matrix-valued homogeneous random fields with uncertain spectral measure}. For all $\bfx$ fixed in $\RR^3$,
the fourth-order random tensor $\CC(\bfx;\bfcurS)$ verifies the usual symmetry and positiveness properties. Let $\textbf{i} =(i,j)$ with $1\leq i\leq j\leq 3 $ and $\textbf{j} =(p,q)$ with $1\leq p\leq q\leq 3$ be the indices with values in $\{1,\ldots , 6\}$, which allow for defining the $\MM^+_6$-valued random matrix $[\CC(\bfx;\bfcurS)]$ such that $[\CC(\bfx;\bfcurS)]_{\textbf{i}\textbf{j}} = \CC_{ijpq}(\bfx;\bfcurS)$ (use of the representation in Voigt notation for the constitutive equation).\\

\noindent \textit{Random effective elasticity matrix}. For fixed $\bfcurS$, the parameterized effective elasticity matrix $[\CC^\peff(\bfcurS)]$ is a random matrix in $\MM^+_6$,
which is obtained by stochastic homogenization solving a stochastic elliptic BVP on a bounded domain $\Omega$ of $\RR^3$. The random effective elasticity matrix $[\widetilde\CC^\peff]$, corresponding to the elasticity random field with uncertain spectral measure, is then given by $[\widetilde\CC^\peff] = [\CC^\peff(\bfS)]$.
%
\begin{definition}[{\textit{Non-Gaussian homogeneous random field}} $\hbox{[}\CC(\cdot;\bfcurS)\hbox{]}$ {\textit{given}} $\bfcurS$] \label{definition:1}
Let $[\underline\CC]$ be a given matrix in $\MM^+_6$ independent of $\bfx$ and $\bfcurS$. We define $\{ [\CC(\bfx;\bfcurS)],\bfx\in\RR^3\}$ as a non-Gaussian $\MM^+_6$-valued second-order random field, on a probability space $(\Theta,\curT,\curP)$, indexed by $\RR^3$, homogeneous, mean-square continuous, whose mean value is
$[\underline\CC] = E\{[\CC(\bfx;\bfcurS)]\}$ that is therefore independent of $\bfx$ and $\bfcurS$. We have
\begin{equation}\label{eq:eq1}
\tr \,[\underline\CC] = \underline c_1 \quad , \quad \langle [\underline\CC]\bfomega , \bfomega\rangle_2 \,\,\, \geq \,\, \underline c_0\, \Vert\bfomega\Vert_2^2  \,\, ,\,\, \forall \,\bfomega\in\RR^6\, ,
\end{equation}
in which $\underline c_0$ and $\underline c_1$ are two positive finite constants.
\end{definition}
With the construction proposed in this paper, $E\{[\widetilde\CC(\bfx)]\} = E\{[\CC(\bfx;\bfS)]\}$ will not be equal to $[\underline\CC]$ (that is not a difficulty). However, we will see that $E\{[\widetilde\CC(\bfx)]\} \simeq [\underline\CC]$.
%
\begin{lemma}[Normalization of random field $\hbox{[}\CC(\cdot;\bfcurS)\hbox{]}$ given $\bfcurS$] \label{lemma:1}
Let $[\underline\LL]$ be the upper triangular $(6\times 6)$ real matrix such that $[\underline\CC]= [\underline\LL]^T\, [\underline\LL]$. For fixed $\bfcurS$, the normalized representation of $[\CC(\bfx;\bfcurS)]$ is written as,
\begin{equation} \label{eq:eq2}
[\CC(\bfx;\bfcurS)] = \frac{1}{1+\epsilon}\,[\underline\LL]^T(\epsilon \,[I_6] + [\bfC(\bfx;\bfcurS)] ) [\underline\LL]\, ,
\end{equation}
in which $\epsilon > 0$ is given and where $\{ [\bfC(\bfx;\bfcurS)],\bfx\in\RR^3\}$ is a $\MM^+_6$-valued random field (by construction), defined on $(\Theta,\curT,\curP)$,
indexed by $\RR^3$. Then $ [\bfC(\cdot;\bfcurS)]$ is homogeneous, mean-square continuous, and such that
\begin{equation} \label{eq:eq3}
E\{[\bfC(\bfx;\bfcurS)]\} = [I_6] \quad , \quad \forall\,\bfx \in \RR^3\, .
\end{equation}
\end{lemma}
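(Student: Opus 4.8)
The plan is to invert the defining relation~\eqref{eq:eq2} so as to write $[\bfC(\cdot;\bfcurS)]$ as a fixed, $\bfx$-independent affine transformation of $[\CC(\cdot;\bfcurS)]$, and then to transport each of the three asserted properties from $[\CC(\cdot;\bfcurS)]$ — which are available through Definition~\ref{definition:1} — across this transformation. Since $[\underline\CC]\in\MM_6^+$, its Cholesky factor $[\underline\LL]$ has strictly positive diagonal entries and is therefore invertible; left- and right-multiplying~\eqref{eq:eq2} by $[\underline\LL]^{-T}$ and $[\underline\LL]^{-1}$ and isolating $[\bfC(\bfx;\bfcurS)]$ yields
\[
[\bfC(\bfx;\bfcurS)] \,=\, (1+\epsilon)\,[\underline\LL]^{-T}\,[\CC(\bfx;\bfcurS)]\,[\underline\LL]^{-1} \,-\, \epsilon\,[I_6]\, .
\]

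For the mean~\eqref{eq:eq3}, I would take the expectation of this identity. Because $[\underline\LL]^{-T}$, $[\underline\LL]^{-1}$ and the scalars are deterministic, $E$ commutes with the two matrix products, and substituting $E\{[\CC(\bfx;\bfcurS)]\}=[\underline\CC]=[\underline\LL]^T[\underline\LL]$ from Definition~\ref{definition:1} makes the product telescope, $(1+\epsilon)[\underline\LL]^{-T}[\underline\LL]^T[\underline\LL][\underline\LL]^{-1}-\epsilon[I_6]=(1+\epsilon)[I_6]-\epsilon[I_6]=[I_6]$, which is independent of $\bfx$ as claimed.

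For homogeneity and mean-square continuity I would exploit that the map above is affine with constant deterministic coefficients. Homogeneity of $[\CC(\cdot;\bfcurS)]$ means that its (finite-dimensional, or at least second-order) laws are invariant under any translation $\bfx\mapsto\bfx+\bftau$; applying the same deterministic affine map pointwise preserves this invariance, so $[\bfC(\cdot;\bfcurS)]$ is homogeneous. For mean-square continuity, differencing the inversion identity at $\bfx+\bfh$ and $\bfx$ cancels the constant term, leaving $[\bfC(\bfx+\bfh;\bfcurS)]-[\bfC(\bfx;\bfcurS)]=(1+\epsilon)\,[\underline\LL]^{-T}\bigl([\CC(\bfx+\bfh;\bfcurS)]-[\CC(\bfx;\bfcurS)]\bigr)\,[\underline\LL]^{-1}$; using the submultiplicative bound $\Vert[a][b][c]\Vert_F\leq\Vert[a]\Vert_2\,\Vert[b]\Vert_F\,\Vert[c]\Vert_2$ together with $\Vert[\underline\LL]^{-T}\Vert_2=\Vert[\underline\LL]^{-1}\Vert_2$, squaring and taking expectation gives
\[
E\{\Vert[\bfC(\bfx+\bfh;\bfcurS)]-[\bfC(\bfx;\bfcurS)]\Vert_F^2\}\,\leq\,(1+\epsilon)^2\,\Vert[\underline\LL]^{-1}\Vert_2^4\,E\{\Vert[\CC(\bfx+\bfh;\bfcurS)]-[\CC(\bfx;\bfcurS)]\Vert_F^2\}\, ,
\]
whose right-hand side tends to $0$ as $\Vert\bfh\Vert_2\to 0$ by the mean-square continuity of $[\CC(\cdot;\bfcurS)]$.

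I do not expect a genuine obstacle here: the statement is essentially a verification. The only points requiring a little care are the invertibility of $[\underline\LL]$, which is guaranteed by the positive-definiteness of $[\underline\CC]$, and the operator/Frobenius norm inequality used to propagate mean-square continuity through the two-sided multiplication.
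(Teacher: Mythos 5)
Your proposal is correct and coincides in substance with the paper's proof, which simply states that under Definition~\ref{definition:1} the properties of $[\bfC(\cdot;\bfcurS)]$ are ``easy to see'': the verification it leaves implicit is exactly your argument of inverting the affine relation~\eqref{eq:eq2} and transporting the mean, homogeneity, and mean-square continuity through the deterministic map. The care you take with the invertibility of $[\underline\LL]$ and the bound $\Vert[a][b][c]\Vert_F\leq\Vert[a]\Vert_2\,\Vert[b]\Vert_F\,\Vert[c]\Vert_2$ is sound and fills in the details the paper omits.
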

%
\begin{proof} (Lemma~\ref{lemma:1}).
Under the hypotheses introduced in Definition~\ref{definition:1}, it is easy to see that $[\bfC(\cdot;\bfcurS)]$ is a second-order, homogeneous, mean-square continuous random field, and satisfies Eq.~\eqref{eq:eq3}.
\end{proof}
It should be noted that the lower bound $\epsilon\,[\underline\CC]/(1+\epsilon)$ used in Eq.~\eqref{eq:eq2} could be replaced by a more general lower bound $[\CC_b]\in\MM_6^+$ as proposed in \cite{Soize2017b,Soize2017e}. Note also that, as previously, introducing $[\widetilde\bfC(\bfx)\} = [\bfC(\bfx;\bfS)]$, $E\{[\widetilde\bfC(\bfx)]\}$ will not be equal to $[I_6]$ and we will see that $E\{[\widetilde\bfC(\bfx)]\} \simeq [I_6]$.
%
\begin{hypothesis}[{\textit{Principle of construction of random field}} $\hbox{[}\bfC(\cdot;\bfcurS)\hbox{]} $ {\textit{given}} $\bfcurS$] \label{hypothesis:1}
By construction (see Lemma~\ref{lemma:1}), $[\bfC(\cdot;\bfcurS)]$ is a $\MM^+_6$-valued random field indexed by $\RR^3$ and homogeneous.
For all $\bfx$ fixed in $\RR^3$, the $\MM^+_6$-valued random variable $[\bfC(\bfx;\bfcurS)]$ is constructed by using the Maximum Entropy Principle under the following available information,
\begin{equation} \label{eq:eq6}
E\{[\bfC(\bfx;\bfcurS)]\} = [I_6] \quad , \quad E\{ \log (\det[\bfC(\bfx;\bfcurS)] ) \} = b_c \, ,
\end{equation}
in which $b_c$ is independent of $\bfx$ and $\bfcurS$ and such that $\vert b_c \vert < +\infty$. The second equality is introduced in order  that the random matrix
$[\bfC(\bfx;\bfcurS)]^{-1}$ (that exists almost surely) be a second-order random variable:
$E\{\Vert[\bfC(\bfx;\bfcurS)]^{-1}\Vert_2^2\} \leq E\{\Vert[\bfC(\bfx;\bfcurS)]^{-1}\Vert_F^2\} < +\infty$.
With such a construction, $[\bfC(\bfx;\bfcurS)]$ will appear as a nonlinear transformation of $6\times(6+1)/2 = 21$ independent normalized Gaussian real-valued random variables $\{ G_{mn}(\bfx;\bfcurS), 1 \leq m \leq n\leq 6\}$, such that
\begin{equation} \label{eq:eq8}
E\{G_{mn}(\bfx;\bfcurS)\} = 0 \quad , \quad E\{G_{mn}(\bfx;\bfcurS)^2\} = 1 \, .
\end{equation}
The spatial correlation structure of random field $\{[\bfC(\bfx;\bfcurS)],\bfx\in\RR^3\}$ is introduced by considering  $21$ independent  real-valued random fields
$\{G_{mn}(\bfx;\bfcurS),\bfx\in\RR^3\}$  for $1\leq m \leq n \leq 6$,  corresponding to $21$ independent copies of a unique normalized Gaussian homogeneous mean-square continuous real-valued random field $\{G(\bfx;\bfcurS),\bfx\in\RR^3\}$  given  its normalized spectral measure parameterized by $\bfcurS$.
Note that the Gaussian random field $G(\cdot;\bfcurS)$ is entirely defined by its normalized spectral measure (parameterized by $\bfcurS$) because, $\forall\bfx\in\RR^3$,
$E\{G(\bfx;\bfcurS)\} = 0$ and $E\{G(\bfx;\bfcurS)^2\} = 1$.
The constant $b_c$ is eliminated in favor of a hyperparameter $\delta_c$,  which allows for controlling the level of statistical fluctuations of $[\bfC(\bfx;\bfcurS)]$, defined by $\delta_c =(E\{\Vert [\bfC(\bfx;\bfcurS)] - [I_6]\Vert_F^2\} / 6)^{1/2}$, independent of $\bfx$ and chosen independent of $\bfcurS$.
\end{hypothesis}
%
\begin{proposition}[Random field $\hbox{[}\bfC(\cdot;\bfcurS)\hbox{]}$] \label{proposition:1}
Let us assume Hypothesis~\ref{hypothesis:1}.

\noindent (i) Let $d^S C = 2^{15/2} \Pi_{1\leq m\leq n\leq 6} \,dC_{mn}$ be the volume element on Euclidean space $\MM^S_6$ in which $dC_{mn}$ is the Lebesgue measure on $\RR$. For all $\bfx$ fixed in $\RR^3$, the probability measure $P_{[\bfC(\bfx;\bfcurS)]}(d^S C)$ of the $\MM^+_6$-valued random variable $[\bfC(\bfx;\bfcurS)]$ constructed with the Maximum Entropy Principle under the constraints defined by Eq.~\eqref{eq:eq6}, is independent of $\bfx$ (homogeneous random field), independent of $\bfcurS$ (this marginal probability measure does not depend of the correlation structure), and written as $P_{[\bfC(\bfx;\bfcurS)]}(d^S C) = p_{[\bfC(\bfx;\bfcurS)]}([C])\,d^S C$ in which the probability density function is written as
$p_{[\bfC(\bfx;\bfcurS)]}([C]) = \11_{\MM^+_6}([C])\, c_c (\det [C])^{7(1-\delta_c^2)/(2\delta_c^2)}\exp(-7 \tr\{[C]\} /(2\delta_c^2))$
with $c_c$ the normalization constant and where hyperparameter $\delta_c$ must belong to the real interval $]0\, , \sqrt{7/11}[$.

\noindent (ii) For all $\bfx$ fixed in $\RR^3$, random matrix $[\bfC(\bfx;\bfcurS)]$ is written as
\begin{equation} \label{eq:eq11}
[\bfC(\bfx;\bfcurS)] = [\bfL(\bfx;\bfcurS)]^T\, [\bfL(\bfx;\bfcurS)] \, ,
\end{equation}
in which $[\bfL(\bfx;\bfcurS)]$ is an upper triangular random matrix in $\MM_6$ such that

\noindent 1) the $21$ random variables $\{[\bfL(\bfx;\bfcurS)]_{mn}, 1\leq m\leq n\leq 6\}$ are mutually independent.

\noindent 2) for $1\leq m < n\leq 6$, $[\bfL(\bfx;\bfcurS)]_{mn} = \sigma_c\, G_{mn}(\bfx;\bfcurS)$ with $\sigma_c= \delta_c/\sqrt{7}$ and where $G_{mn}(\bfx;\bfcurS)$
is a normalized Gaussian real-valued random variable (see Eq.~\eqref{eq:eq8}).

\noindent 3) for $1\leq m=n\leq 6$, $[\bfL(\bfx;\bfcurS)]_{mm} = \sigma_c\, \sqrt{ 2\, h( G_{mm}(\bfx;\bfcurS);\alpha_m) }$ with
$\alpha_m = 1/ (2\sigma_c^2) + (1-m)/2$  such that $\alpha_1 > \ldots > \alpha_6 > 3$ and where $G_{mm}(\bfx;\bfcurS)$ is a normalized Gaussian real-valued random variable (see Eq.~\eqref{eq:eq8}). The function $b\mapsto h(b;\alpha)$ is such that $\GammaGamma_\alpha = h(\curN;\alpha)$ is a Gamma random variable with parameter $\alpha$
when $\curN$ is the normalized Gaussian real-valued random variable.

\noindent (iii) The $21$ random fields $\{G_{mn}(\bfx;\bfcurS),\bfx\in\RR^3\}$  for $1\leq m \leq n \leq 6$ are $21$ independent copies of a  normalized Gaussian homogeneous mean-square continuous real-valued random field $\{G(\bfx;\bfcurS),\bfx\in\RR^3\}$,
\begin{equation} \label{eq:eq12}
E\{G(\bfx;\bfcurS)\} = 0 \quad , \quad E\{G(\bfx;\bfcurS)^2\} = 1 \quad , \quad \forall \bfx\in\RR^3 \, ,
\end{equation}
and which will be defined in Section~\ref{sec:Section3} for imposing its spatial correlation structure via its spectral measure. parameterized by $\bfcurS$.
\end{proposition}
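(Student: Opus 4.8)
The plan is to establish the three assertions in sequence, taking Hypothesis~\ref{hypothesis:1} (and hence the representation of Lemma~\ref{lemma:1}) as the starting point.

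\emph{Part (i).} First I would write the Maximum Entropy problem explicitly: maximize the entropy $-\int_{\MM^+_6} p([C])\,\log p([C])\,d^S C$ over probability density functions $p$ supported on $\MM^+_6$, under normalization and the two constraints of Eq.~\eqref{eq:eq6}. Introducing a scalar Lagrange multiplier for normalization, a symmetric matrix multiplier $[\Lambda]$ for $E\{[\bfC]\}=[I_6]$ and a scalar $\mu$ for $E\{\log\det[\bfC]\}=b_c$, the stationarity condition yields the exponential-family form $p([C]) = \11_{\MM^+_6}([C])\,c_c\,(\det[C])^{-\mu}\exp(-\langle[\Lambda],[C]\rangle_F)$, the signs of the multipliers being fixed afterwards by the constraints. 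The key structural step is to show $[\Lambda]=\lambda\,[I_6]$ for some scalar $\lambda$: the reference volume element $d^S C$, the cone $\MM^+_6$, and both constraints are invariant under the conjugation $[C]\mapsto[Q]^T[C][Q]$ for every orthogonal $[Q]$, so the (unique, by strict concavity) maximizer inherits this invariance, which forces $[Q][\Lambda][Q]^T=[\Lambda]$ for all such $[Q]$, hence $\langle[\Lambda],[C]\rangle_F=\lambda\,\tr[C]$. I would then eliminate $(\mu,\lambda)$ in favour of $\delta_c$ by imposing $E\{[\bfC]\}=[I_6]$ together with the definition $\delta_c^2=E\{\Vert[\bfC]-[I_6]\Vert_F^2\}/6$ of the dispersion, which gives $\lambda = 7/(2\delta_c^2)$ and $-\mu = 7(1-\delta_c^2)/(2\delta_c^2)$, i.e. exactly the stated density.

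\emph{Part (ii).} Here the tool is the Cholesky change of variables $[C]=[L]^T[L]$, with $[L]$ upper triangular and positive diagonal, whose classical Jacobian is $\prod_{m\le n}dC_{mn}=2^6\,\prod_{m}[L]_{mm}^{7-m}\,\prod_{m\le n}dL_{mn}$ (this, together with the convention factor $2^{15/2}$ in $d^S C$, is absorbed into the normalization). Substituting $\det[C]=\prod_m[L]_{mm}^2$ and $\tr[C]=\sum_{m<n}[L]_{mn}^2+\sum_m[L]_{mm}^2$ into the density of part~(i) makes the joint density of the $21$ variables $\{[L]_{mn}\}$ a product of one-dimensional marginals, which already yields item~1 (mutual independence). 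The off-diagonal marginals are $\propto\exp(-[L]_{mn}^2/(2\sigma_c^2))$ with $\sigma_c^2=1/(2\lambda)=\delta_c^2/7$, i.e. centred Gaussians of standard deviation $\sigma_c$, giving item~2. Each diagonal marginal is $\propto[L]_{mm}^{2(-\mu)+7-m}\exp(-[L]_{mm}^2/(2\sigma_c^2))$; the substitution $t=[L]_{mm}^2/(2\sigma_c^2)$ turns it into a Gamma density $\propto t^{\alpha_m-1}e^{-t}$ with $\alpha_m=(-\mu)+4-m/2=1/(2\sigma_c^2)+(1-m)/2$, which is item~3 once the function $h$ is defined by the probability-integral transform $h(b;\alpha)=F_{\GammaGamma_\alpha}^{-1}(\Phi(b))$, so that $h(\curN;\alpha)$ has the law $\GammaGamma_\alpha$ for a normalized Gaussian $\curN$.

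\emph{Part (iii)} is then a direct consequence of the spatial-correlation prescription of Hypothesis~\ref{hypothesis:1}: the $21$ scalar fields $\{G_{mn}(\cdot;\bfcurS)\}$ are, by construction, independent copies of a single normalized Gaussian homogeneous mean-square continuous field $G(\cdot;\bfcurS)$ obeying Eq.~\eqref{eq:eq12}; since every entry of $[\bfL(\cdot;\bfcurS)]$, and therefore of $[\bfC(\cdot;\bfcurS)]$, is a fixed measurable function of these copies evaluated at the same $\bfx$, the homogeneity and mean-square continuity of $[\bfC(\cdot;\bfcurS)]$ follow.

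\emph{Main obstacle.} The formal variational calculation is routine; the two delicate points are the reduction $[\Lambda]=\lambda[I_6]$ (which requires existence and uniqueness of the maximizer so that the invariance argument applies) and, above all, pinning down the admissible interval $\delta_c\in\,]0,\sqrt{7/11}[$. Integrability of $p$ and non-degeneracy only give the mild lower bound $\delta_c>0$; the upper bound is the threshold guaranteeing the second-order property $E\{\Vert[\bfC]^{-1}\Vert_F^2\}<+\infty$ demanded in Hypothesis~\ref{hypothesis:1}. In the factored representation of part~(ii) this becomes a finiteness condition on the inverse moments of the $[L]_{mm}$, controlled by the smallest shape parameter; since $\alpha_6=7/(2\delta_c^2)-5/2$, the condition $\alpha_6>3$ is equivalent to $\delta_c<\sqrt{7/11}$, and verifying that this single inequality suffices to control all the relevant inverse (cross-)moments is the step demanding the most care.
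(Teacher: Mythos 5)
Your proposal is correct in substance, but it takes a genuinely different route from the paper. The paper's own proof is essentially bibliographic: it invokes \cite{Soize2000} for the Maximum Entropy construction of the marginal density in (i) and \cite{Soize2006,Soize2017e,Soize2017b} for the Cholesky-type representation in (ii), and then spends its effort on one explicit computation --- verifying from the representation that $E\{[\bfC(\bfx;\bfcurS)]_{mm}\} = 2\sigma_c^2\alpha_m + \sigma_c^2(m-1) = 1$ and $E\{[\bfC(\bfx;\bfcurS)]_{mn}\}=0$ for $m\neq n$ --- precisely because that computation is reused in Remark~\ref{remark:1} to analyze what happens when $G(\cdot;\bfS)$ is no longer Gaussian. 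You instead reconstruct the whole chain from scratch: the Lagrange-multiplier form of the MaxEnt density, the orthogonal-invariance argument forcing $[\Lambda]=\lambda[I_6]$, the Cholesky change of variables with Jacobian $2^6\prod_m [L]_{mm}^{7-m}$, and the probability-integral-transform definition of $h$; your arithmetic checks out ($\sigma_c^2 = 1/(2\lambda) = \delta_c^2/7$, $\alpha_m = -\mu + 4 - m/2 = 1/(2\sigma_c^2)+(1-m)/2$, and $\alpha_6 = 7/(2\delta_c^2)-5/2 > 3 \iff \delta_c < \sqrt{7/11}$, matching the stated interval). What your approach buys is self-containedness: it explains where $\sigma_c$, the $\alpha_m$, the exponent $7(1-\delta_c^2)/(2\delta_c^2)$, and the admissible range of $\delta_c$ actually come from, none of which the paper derives. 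What the paper's approach buys is brevity plus the explicit mean-value identities needed downstream in Remark~\ref{remark:1}, which your derivation makes true by construction but never exhibits in the factored form $2\sigma_c^2 E\{h(G_{mm};\alpha_m)\} + \sigma_c^2(m-1)$ that the remark manipulates. The two points you flag as delicate (existence/uniqueness of the MaxEnt maximizer for the invariance argument, and the fact that $\alpha_6>3$ alone controls $E\{\Vert[\bfC]^{-1}\Vert_F^2\}$) are indeed the places where a fully rigorous version of your plan would need the detailed estimates that the cited references supply.
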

%
\begin{proof} (Proposition~\ref{proposition:1}).
We refer the reader  to \cite{Soize2000} for the construction using the Maximum Entropy Principle and to \cite{Soize2006,Soize2017e,Soize2017b} for the representation  defined by Eq.~\eqref{eq:eq11}.
However, we have to prove the properties that yield $E\{[\bfC(\bfx;\bfcurS)]\} = [I_6]$, this proof being used in Remark~\ref{remark:1}.
For $1\leq m\leq n\leq 6$, $[\bfC(\bfx;\bfcurS)]_{mn} = \sum_{\ell=1}^m [\bfL(\bfx;\bfcurS)]_{\ell m}\, [\bfL(\bfx;\bfcurS)]_{\ell n}$.
For $m=n$, $E\{[\bfC(\bfx;\bfcurS)]_{mm} = 2 \sigma_c^2\, E\{h( G_{mm}(\bfx;\bfcurS);\alpha_m)\} + \sigma_c^2\sum_{\ell < m} E\{ G_{\ell m}(\bfx;\bfcurS)^2\}$.
Eq.~\eqref{eq:eq8} yields $\sum_{\ell < m} E\{ G_{\ell m}(\bfx;\bfcurS)^2\}$ $= \sum_{\ell < m} 1 = m-1$ and
$E\{ h( G_{mm}(\bfx;\bfcurS);\alpha_m)\} = E\{ \GammaGamma_{\alpha_m}\} = \alpha_m = 1/(2\sigma_c^2) + (1-m)/2$. Therefore,
$E\{ [\bfC(\bfx;\bfcurS)]_{mm}\} = 1$.
For $1 \leq m < n \leq 6$, we have $E \{ [\bfC(\bfx;\bfcurS)]_{mn} = \sigma_c^2\, E\{ G_{mn}(\bfx;\bfcurS)$ $\sqrt{2\, h( G_{mm}(\bfx;\bfcurS);\alpha_m)}\} + \sigma_c^2\sum_{\ell < m} E\{ G_{\ell m}(\bfx;\bfcurS)\,$ $G_{\ell n}(\bfx;\bfcurS)\}$.
Proposition~\ref{proposition:1}-(iii) and Eq.~\eqref{eq:eq8} yield
$E\{ G_{\ell m}(\bfx;\bfcurS)$ $\,G_{\ell n}(\bfx;\bfcurS)\}\! = \! E\{ G_{\ell m}(\bfx;\bfcurS)\}\times E\{G_{\ell n}(\bfx;\bfcurS)\} = 0$ and
$E \{ G_{mn}(\bfx;\bfcurS)\sqrt{ 2\, h( G_{mm}(\bfx;\bfcurS);\alpha_m) } \} $ $= E\{ G_{mn}(\bfx;\bfcurS)\}\times E\{ \sqrt{2\, h( G_{mm}(\bfx;\bfcurS);\alpha_m)}\} \! = \! 0$. Therefore, $E\{ [\bfC(\bfx;\bfcurS)]_{mn}\} = 0$. We thus obtain the first equation in Eq.~\eqref{eq:eq6}.
\end{proof}
%
\begin{lemma}[Properties of function $h$] \label{lemma:2}
(i) Let be $\alpha > 3$. Function $b\mapsto h(b;\alpha): \RR\mapsto ]0\, , +\infty[$ defined in Proposition~\ref{proposition:1} is written as
$h(b;\alpha) = F_\alpha^{-1}(F(b))$ in which $F(b) = \int_{-\infty}^b (2\pi)^{-1/2} e^{-t^2/2}\, dt$ and where $F_\alpha^{-1}$ is the reciprocical function of
$F_\alpha(\gammagamma) = \GammaGamma(\alpha)^{-1}\times$ $\int_0^\gammagamma t^{\alpha -1} e^{-t}\, dt$ for $\gammagamma \geq 0$ with $\GammaGamma(\alpha) = \int_0^{+\infty} t^{\alpha -1} e^{-t}\, dt$. For all $\alpha > 3$ and for all $b\in\RR$, $h(b;\alpha) \leq 2\alpha + b^2$.

\noindent (ii) If $\curN$ is  Gaussian with $E\{\curN\} = 0$ and $E\{\curN^2\} = 1$, then $E\{ h(\curN;\alpha) \} = \alpha$.

\noindent (iii) If $\curG$ is non-Gaussian with $E\{\curG\} = 0$ and $E\{\curG^2\} = 1$, we have $E\{ h(\curG;\alpha) \} \not = \alpha$, but
for $\alpha\rightarrow +\infty$, $E\{ h(\curG;\alpha)\} \rightarrow \alpha$.
\end{lemma}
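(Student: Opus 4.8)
\section*{Proof proposal for Lemma~\ref{lemma:2}}

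The plan is to treat the three items in turn, exploiting throughout that $h(\cdot;\alpha)$ is, by its construction in Proposition~\ref{proposition:1}, the increasing map that transports the standard Gaussian law onto the Gamma law of parameter $\alpha$. For item (i), the representation $h(b;\alpha)=F_\alpha^{-1}(F(b))$ follows from the probability integral transform: since $F$ is the distribution function of $\curN$, the variable $F(\curN)$ is uniform on $]0,1[$, so $F_\alpha^{-1}(F(\curN))$ has distribution function $F_\alpha$, i.e. is the Gamma variable $\GammaGamma_\alpha$ required in Proposition~\ref{proposition:1}; monotonicity of $F$ and $F_\alpha$ makes this transport map unique, which identifies it with $h$. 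To prove the inequality $h(b;\alpha)\le 2\alpha+b^2$ I would rewrite it, using that $F_\alpha$ is increasing, as the tail comparison $1-F_\alpha(2\alpha+b^2)\le 1-F(b)$.

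For $b\le 0$ the right-hand side is $\ge 1/2$, whereas $2\alpha+b^2\ge 2\alpha$ exceeds the mean $\alpha$ of $\GammaGamma_\alpha$, which in turn exceeds its median; hence $1-F_\alpha(2\alpha+b^2)<1/2$ and the inequality is immediate. The substantive case is $b>0$: here I would bound the Gamma tail $1-F_\alpha(2\alpha+b^2)$ from above by the Cram\'er--Chernoff estimate $(\gamma/\alpha)^\alpha e^{\alpha-\gamma}$ at $\gamma=2\alpha+b^2$, bound the Gaussian tail $1-F(b)$ from below by a Mills-ratio estimate, and compare. Because the two estimates have very different quality near $b=0$ and for large $b$, I expect to split $b>0$ into a bounded range (where $1-F(b)$ stays close to $1/2$ and the crude Gamma bound at $\gamma=2\alpha$ already suffices for $\alpha>3$) and a large-$b$ range (where the $e^{-b^2}$ factor of the optimized Chernoff bound beats the $e^{-b^2/2}$ Gaussian decay). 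Verifying this comparison uniformly over all $\alpha>3$ and $b>0$ is the fiddly technical core of (i); the asymptotics $h(b;\alpha)\approx\alpha+\sqrt\alpha\,b$ show there is ample slack, which is what makes the crude bounds viable.

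Item (ii) is then immediate: $h(\curN;\alpha)=\GammaGamma_\alpha$ by construction and $E\{\GammaGamma_\alpha\}=\alpha$, equivalently $\int_0^1 F_\alpha^{-1}(v)\,dv=\alpha$. For the first assertion of (iii), if $\curG$ is non-Gaussian then $F(\curG)$ is not uniform on $]0,1[$, so $h(\curG;\alpha)=F_\alpha^{-1}(F(\curG))$ is not Gamma-distributed and its mean has no reason to equal $\alpha$; the expansion below exhibits the discrepancy explicitly, through the third and higher moments of $\curG$.

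The limit in (iii) is the most delicate point. Writing $E\{h(\curG;\alpha)\}-\alpha=E\{h(\curG;\alpha)\}-E\{h(\curN;\alpha)\}$ by (ii), I would insert the Cornish--Fisher expansion of the Gamma quantile expressed at the Gaussian deviate $b$, namely $h(b;\alpha)=\alpha+\sqrt\alpha\,b+\tfrac13(b^2-1)+r_\alpha(b)$ with $r_\alpha(b)\to 0$ as $\alpha\to\infty$ for each fixed $b$. In the difference of the two expectations the constant term cancels, the term $\sqrt\alpha\,b$ contributes $\sqrt\alpha\,(E\{\curG\}-E\{\curN\})=0$ since both means vanish, and the term $\tfrac13(b^2-1)$ contributes $\tfrac13(E\{\curG^2\}-E\{\curN^2\})=0$ since both second moments equal $1$; thus the two normalization conditions kill exactly the diverging and the constant orders, leaving $E\{h(\curG;\alpha)\}-\alpha=E\{r_\alpha(\curG)\}-E\{r_\alpha(\curN)\}$. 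The hard part is to show this remainder tends to $0$: pointwise $r_\alpha(b)\to 0$, but the bound of (i) controls only growth and not smallness, so I would need a quantitative quantile expansion of the form $|r_\alpha(b)|\le C\,(1+|b|^3)/\sqrt\alpha$ on the bulk, together with the bound $h(b;\alpha)\le 2\alpha+b^2$ (and a matching lower bound) to dominate the far tails using the finite moments of $\curG$. Granting such remainder control, dominated convergence yields $E\{h(\curG;\alpha)\}-\alpha\to 0$, which is the claimed asymptotic equality.
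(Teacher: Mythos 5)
Your representation $h(b;\alpha)=F_\alpha^{-1}(F(b))$, your treatment of $b\le 0$ (which works, and could even be simplified: by Markov's inequality $1-F_\alpha(2\alpha+b^2)\le 1-F_\alpha(2\alpha)\le \alpha/(2\alpha)=1/2\le 1-F(b)$), and part (ii) are fine. The genuine gap is the case $b>0$ of (i), which is exactly the part you leave as a plan, and the plan as described fails. The Cram\'er--Chernoff bound $1-F_\alpha(\gamma)\le(\gamma/\alpha)^\alpha e^{\alpha-\gamma}$ evaluated at $\gamma=2\alpha+b^2$ is, near the lower end of the parameter range $\alpha>3$, larger than the \emph{exact} Gaussian tail: at $\alpha=3$, $b=1$ one gets $(7/3)^3e^{-4}\simeq 0.233$, while $1-F(1)\simeq 0.159$ (and your Mills-ratio lower bound is $\simeq 0.121$); the same failure persists for $\alpha$ in a neighborhood of $3$ (up to roughly $\alpha\approx 4.5$) and moderate $b$, roughly $b\in(0.3,2.7)$. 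Since the Chernoff estimate already exceeds the exact Gaussian tail there, no sharpening of the Gaussian lower bound can rescue the comparison, and the ``ample slack'' you invoke does not exist in this regime: the slack in $h(b;\alpha)\le 2\alpha+b^2$ is real, but the Chernoff bound at $\alpha=3$ overshoots the true Gamma tail by nearly an order of magnitude and consumes all of it. The paper closes precisely this case by a different argument: it sets $J_\alpha(b)=F_\alpha(2\alpha+b^2)-F(b)$, notes $J_\alpha(0)>0$ and $J_\alpha(b)\to 0_+$ as $b\to+\infty$, and shows $J'_\alpha(b)<0$ for $b>0$ (reducing to $a_\alpha\, b\,(1+b^2/(2\alpha))^{\alpha-1}<e^{b^2/2}$ with $a_\alpha=2\sqrt{2\pi}\,e^{-2\alpha}(2\alpha)^{\alpha-1}/\GammaGamma(\alpha)\le 0.224$ for $\alpha>3$), so that $J_\alpha$ decreases to $0$ and is therefore positive. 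To complete (i) you would need either this monotonicity argument or a Gamma tail estimate substantially sharper than Chernoff.

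On (iii), your Cornish--Fisher route, with the cancellation of the orders $\sqrt{\alpha}\,b$ and $\tfrac13(b^2-1)$ by the normalization $E\{\curG\}=0$, $E\{\curG^2\}=1$, is sound in outline and actually more careful than the paper's own argument, which invokes the normal approximation of the chi-square c.d.f.\ to get $h(b;\alpha)\sim\alpha+b\sqrt{\alpha}$ pointwise and then takes expectations without justifying the interchange of limit and expectation. But as written your argument is conditional on the quantitative remainder bound $\vert r_\alpha(b)\vert\le C(1+\vert b\vert^3)/\sqrt{\alpha}$ and on a matching lower bound for $h$ in the tails, neither of which you prove; so (iii) is also a plan rather than a proof, albeit one whose missing ingredients are standard, whereas the missing ingredient in your (i) is a step that is false as proposed.
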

%
%
\begin{proof} (Lemma~\ref{lemma:2}).
(i) Function $h$ defined in Proposition~\ref{proposition:1} shows that $F_\alpha(h(b;\alpha)) = F(b)$ in which $F(b)$ is the c.d.f of $\curN$ and $F_\alpha(\gammagamma)$ is the c.d.f of the $\GammaGamma_\alpha$ random variable such that $E\{\GammaGamma_\alpha\} = \alpha$. We then deduce that $h(b;\alpha) = F_\alpha^{-1}(F(b))$.
In order to prove that $h(b;\alpha) \leq 2\alpha + b^2$, since $u\mapsto F_\alpha^{-1}(u)$ is a strictly increasing function from $[0,1[$ into $\RR^+$, we have to prove that, for all $\alpha > 3$ and for all $b\in\RR$, we have $J_\alpha(b) \geq 0$ in which
\begin{equation} \label{eq:eq13}
J_\alpha(b) = F_\alpha(2\alpha+b^2)- F(b) = \frac{1}{\GammaGamma(\alpha)}\int_0^{2\alpha+b^2} t^{\alpha-1}e^{-t}\, dt -\frac{1}{\sqrt{2\pi}}\int_{-\infty}^b e^{-t^2/2}\, dt\, ,
\end{equation}
which can be rewritten as
\begin{equation} \label{eq:eq14}
J_\alpha(b) = P_\alpha + \frac{1}{\GammaGamma(\alpha)}\int_{2\alpha}^{2\alpha+b^2} t^{\alpha-1} e^{-t}\, dt -\frac{\sgn(b)}{\sqrt{2\pi}}\int_0^{\vert b\vert} e^{-t^2/2}\, dt\, ,
\end{equation}
in which $P_\alpha = F_\alpha(2\alpha)-1/2$ that is such that, $\forall\alpha > 3$, $P_\alpha > F_3(6)-1/2 \simeq 0.438$, and $\sgn$ is the sign function.
Equation~\eqref{eq:eq14} shows that $\forall\alpha > 3$  and $\forall \, b < 0$, $J_\alpha(b) > 0$ because $J_\alpha(b)$ is the sum of three positive terms. For $b=0$ and
$\forall\alpha > 3$, Eq~.~\eqref{eq:eq14} shows that $J_\alpha(0) = P_\alpha > 0$. For $b> 0$, we use Eq~.~\eqref{eq:eq13}.
Let $J'_\alpha(b)= d J_\alpha(b)/db$ be such that
$J'_\alpha(b) = (2\pi)^{-1/2} e^{-b^2/2}  (a_\alpha  b \,(1+{b^2}/{(2\alpha)})^{\alpha-1} e^{-b^2/2} - 1 )$,
in which $a_\alpha = 2\sqrt{2\pi} e^{-2\alpha} (2\alpha)^{\alpha-1} /\GammaGamma(\alpha)$ that is such that, $\forall\alpha > 3$, $a_\alpha \leq 0.223679$.
From Eq.~\eqref{eq:eq13}, it can be seen  that, $\forall\alpha > 3$, for $b\rightarrow +\infty$, $J_\alpha(b) \rightarrow 0$ and
$J'_\alpha(b) \sim -e^{-b^2/2}/\sqrt{2\pi}$. Consequently, $J_\alpha \rightarrow 0_+$. Since $J_\alpha(0) > 0$, we will have $J_\alpha(b) > 0$ for $b > 0$ if
$J'_\alpha(b) < 0$, that is to say if $a_\alpha b\, (1+\frac{b^2}{2\alpha})^{\alpha-1} < e^{b^2/2}$, which is true for $\alpha > 3$ and $b >0$.
(ii) If $\curN$ is normalized and Gaussian, then $h(\curN;\alpha) = \GammaGamma_\alpha$ and hence $E\{h(\curN;\alpha)\}=\alpha$.
(iii)  Let $P(x^2 \vert \widehat\alpha)$ be defined by
$P(x^2 \vert \widehat\alpha) = ( 2^{\widehat\alpha/2} \GammaGamma(\widehat\alpha/2) )^{-1} \int_0^{x^2} v^{(\widehat\alpha/2) -1} e^{-v/2} \, dv$.
The change of variable $v=2t$ yields
$P(x^2 \vert \widehat\alpha) = ( \GammaGamma(\widehat\alpha/2) )^{-1} \int_0^{x^2/2} t^{(\widehat\alpha/2) -1} e^{-t} \, dt$. Taking $\widehat\alpha = 2\alpha$ yields
$P(x^2 \vert \widehat\alpha) = F_\alpha(x^2/2)$. For $\widehat\alpha \rightarrow +\infty$, $P(x^2 \vert \widehat\alpha) \sim F(y)$ with $y=(x^2-\widehat\alpha)/\sqrt{2\widehat\alpha}$ and hence $F_\alpha(x^2/2) \sim F(y)$. Since $F_\alpha(h(b;\alpha))=F(b)$, taking $x^2/2 = h(b;\alpha)$ yields
$F(b)\sim F(y)$ that is to say $b \sim y = (2h(b;\alpha) - 2\alpha ) /\sqrt{4\alpha}$, which shows that $h(b;\alpha) \sim \alpha + b\sqrt{\alpha}$. If $\curG$ is a non-Gaussian random variable such that $E\{\curG\}=0$, then $E\{h(\curG;\alpha)\} \sim \alpha + \sqrt{\alpha} \, E\{\curG\} = \alpha$.
\end{proof}

\begin{remark} \label{remark:1}
The analysis of the proof of Proposition~\ref{proposition:1} shows that the property $E\{[\bfC(\bfx;\bfcurS)]\} = [I_6]$ holds for a fixed value of $\bfcurS$. When $\bfcurS$ will be modeled by a random variable $\bfS$ in order to take into account uncertainties in the spectral measure (see Section~\ref{sec:Section4} and as we have previously explained  at the beginning of Section~\ref{sec:Section2}) the random field $\{[\widetilde\bfC(\bfx)],\bfx\in\RR^3\}$ such that $[\widetilde\bfC(\bfx)] = [\bfC(\bfx;\bfS)]$ will then depend on copies of the random field $\{G(\bfx;\bfS),\bfx\in\RR^3\}$ that will always satisfy $E\{G(\bfx;\bfS)\} = 0$ and $E\{G(\bfx;\bfS)^2\} = 1$, but which will no longer be Gaussian. By examining the proof of Proposition~\ref{proposition:1}, it can be seen that we will always have
$E\{[\widetilde\bfC(\bfx)]_{mn}\}=0$ for $m\not= n$ but that $E\{[\widetilde\bfC(\bfx)]_{mm}\}= 2 \, \sigma_c^2 \, E\{h(G(\bfx;\bfS);\alpha_m)\} + \sigma_c^2(m-1) \not = 1$. Nevertheless, from Proposition~\ref{proposition:1}, since $\alpha_m > 3$ and from Lemma~\ref{lemma:2}-(iii), taking $\curG = G(\bfx;\bfS)$ yields
$E\{[\widetilde\bfC(\bfx)]_{mm}\}\simeq 1$. It has numerically been verified that, if $\curG$ is a uniform random variable (that will not be the case for $G(\bfx;\bfS)$) such that $E\{\curG\} = 0$ and $E\{\curG^2\} = 1$ (that will  be the case for $G(\bfx;\bfS)$), then we have $E\{h(\curG;\alpha_m)\} \simeq \alpha_m$ with an error of  $5\times 10^{-4}$ for all $1\leq m\leq 6$.
\end{remark}

\section{Construction and analysis of the Gaussian random field $G(\cdot;\bfcurS)$ with uncertain spectral measure parameterized by $\bfcurS$}
\label{sec:Section3}
We start by constructing a normalized Gaussian, homogeneous, second-order, mean-square continuous random field $\{G(\bfx),\bfx\in\RR^3\}$. This field corresponds to $G(.;\underline\bfcurS)$ for which its spectral measure is given  and represented by a given value $\underline\bfcurS$ of $\bfcurS$, that is to say, $G = G(\cdot;\underline\bfcurS)$.
Therefore, there exists a positive bounded spectral measure $m_G(d\bfk)$ on $\RR^3$ such that the correlation function $\rho_G$ of $G$ is written,
for all $\bfx$ and $\bfzeta$ in $\RR^3$, as
\begin{equation} \label{eq:eq19}
\rho_G(\bfzeta) = E\{G(\bfx+\bfzeta)\, G(\bfx)\} = \int_{\RR^3} e^{\iota\, \bfk\cdot\bfzeta} m_G(d\bfk) =  \int_{\RR^3} \cos(\bfk\cdot\bfzeta)\, m_G(d\bfk)\, ,
\end{equation}
in which $\iota =\sqrt{-1}$, $\bfk\cdot\bfzeta = \sum_{j=1}^3 k_j\zeta_j$, $d\bfk = dk_1dk_2dk_3$. In addition, it is assumed that
$m_G(d\bfk) = s(\bfk)\, d\bfk$ admits a spectral density function (s.d.f) $\bfk\mapsto s(\bfk): \RR^3\rightarrow \RR^+$. Equations~\eqref{eq:eq12} and \eqref{eq:eq19}
yield $\rho_G(0)= E\{G(\bfx)^2\} = 1$, and consequently,
\begin{equation} \label{eq:eq20}
  \int_{\RR^3} s(\bfk)\,d\bfk = 1\, .
\end{equation}
In this section, we begin with the analysis of the spectral measure and the modeling of uncertainties. Then we introduce a finite representation of $G(\cdot;\bfcurS)$ and we study its properties. Note that a dimensionless s.d.f, $\chi$, of $s$ will be introduced.
%
\begin{hypothesis}[{\textit{Spectral density function}} $s$ {\textit{and spatial correlation length of}} $G$] \label{hypothesis:2}
It is assumed that $s$ has a compact support $\overline\KK = \partial\KK\cup\KK$ with
$\KK= \Pi_{j=1}^3 \,  ]-K_j , K_j[$ in which $K_j\in [K_j^\pmin, K_j^\pmax]$ with $0 < K_j^\pmin < K_j^\pmax < +\infty$.
It is assumed that $s$ is a continuous function on $\RR^3$. Since supp $s = \overline\KK$, we must have $s(\bfk)=0$, $\forall\bfk\in\partial\KK$, and thus
\begin{equation} \label{eq:eq23}
\int_{\RR^3} s(\bfk)\, d\bfk =\int_{\overline\KK} s(\bfk)\, d\bfk = 1\, .
\end{equation}
Since $G$ is real, we have $s(-\bfk) = s(\bfk)$ for all $\bfk\in\RR^3$. In addition to this symmetry property,
we assume that $s$ satisfies the following quadrant symmetry \cite{Vanmarcke2010}: defining $\bfk_{\{-j\}}$ as vector $\bfk$ for which its component $k_j$ is replaced by $-k_j$, then $s(\bfk_{\{-j\}}) = s(\bfk)$ for $j=1,2,3$ and $\forall\bfk\in\RR^3$. The spatial correlation length for coordinate $\zeta_j$ is defined by
\begin{equation} \label{eq:eq23bis}
L_{cj} = \int_0^{+\infty} \vert \rho_j(0,\ldots,\zeta_j,\ldots,0)\vert \, d\zeta_j \, ,
\end{equation}
and is assumed to be finite.
\end{hypothesis}
%
\begin{definition}[{\textit{Spectral domain sampling}}] \label{definition:2}
Let $\nu_s$ be a given even integer.
For $j\in\{1,2,3\}$, we define $\Delta_j = 2\,K_j/\nu_s$ as the sampling step of interval $[-K_j,K_j]$
and $k_{j\beta_j} = -K_j +( \beta_j-1/2)\,\Delta_j$ for $\beta_j=1,\ldots , \nu_s$ as its spectral sampling points.
Let $\curB = \{\bfbeta = (\beta_1,\beta_2,\beta_3)\, , \beta_j =1,\ldots , \nu_s\}$ be the finite subset of  $\NN^3$.
We define $\Delta$, $K$, $\nu$, and $\bfk_\bfbeta$ such that: $\Delta =\Delta_1\Delta_2\Delta_3$, $K=K_1K_2K_3$, $\nu=(\nu_s)^3$, and for all $\beta\in\curB$, $\bfk_\beta=(k_{1\beta_1},k_{2\beta_2},k_{3\beta_3})\in\KK\subset\RR^3$.
\end{definition}
%
%
%
\begin{lemma}[Discretization of the spectral measure and convergence properties] \label{lemma:4}
Let $\delta_{\bfk_\bfbeta}(\bfk) = \otimes_{j=1}^3 \delta_{k_{j\beta_j}}(k_j)$ be the Dirac measure on $\RR^3$ at sampling point $\bfk_\bfbeta\in\KK\subset \RR^3$ defined in Definition~\ref{definition:2}. Let $m_G^\nu(d\bfk)$ be the positive bounded measure on $\RR^3$ defined by
\begin{equation} \label{eq:eq30}
m_G^\nu(d\bfk) = \sum_{\bfbeta\in\curB} s_\bfbeta^\Delta \,  \delta_{\bfk_\bfbeta}(\bfk) \quad , \quad  s_\bfbeta^\Delta  =\Delta\, s(\bfk_\bfbeta)\, ,
\end{equation}
which is such that $m_G^\nu(\RR^3) =\sum_{\bfbeta\in\curB} s_\bfbeta^\Delta =\eta_\nu$ with $\eta_\nu > 0$. The sequence of measures $\{m_G^\nu(d\bfk)\}_\nu$ converges narrowly towards the measure $m_G(d\bfk)$ and the   positive sequence $\{\eta_\nu\}_\nu$ converges towards $1$.
\end{lemma}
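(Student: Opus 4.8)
The plan is to unwind the definition of narrow convergence and recognize the integral against $m_G^\nu$ as a midpoint Riemann sum. Recall that $\{m_G^\nu\}_\nu$ converges narrowly to $m_G$ precisely when $\int_{\RR^3} f(\bfk)\, m_G^\nu(d\bfk) \to \int_{\RR^3} f(\bfk)\, m_G(d\bfk)$ for every bounded continuous $f\in C_b(\RR^3)$. From the definition~\eqref{eq:eq30} of $m_G^\nu$, the left-hand side equals $\sum_{\bfbeta\in\curB} \Delta\, s(\bfk_\bfbeta)\, f(\bfk_\bfbeta)$, while the right-hand side equals $\int_{\overline\KK} s(\bfk)\, f(\bfk)\, d\bfk$, because $m_G(d\bfk) = s(\bfk)\, d\bfk$ with $\mathrm{supp}\, s = \overline\KK$. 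Everything therefore reduces to showing that the sum converges to this integral as $\nu = \nu_s^3 \to +\infty$.

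First I would observe that the points $\bfk_\bfbeta$ of Definition~\ref{definition:2} are exactly the centers of the $\nu = \nu_s^3$ congruent cells of the uniform partition of the box $\overline\KK = \prod_{j=1}^3 [-K_j, K_j]$, each of volume $\Delta = \Delta_1\Delta_2\Delta_3$; hence $\sum_{\bfbeta} \Delta\, g(\bfk_\bfbeta)$ is the midpoint-rule Riemann sum of any function $g$ over $\overline\KK$. Applying this to $g = s\, f$, whose restriction to the compact box $\overline\KK$ is continuous as the product of the continuous $s$ with the continuous $f$, and noting that the mesh $\max_j \Delta_j = \max_j 2K_j/\nu_s$ tends to $0$ when $\nu_s \to +\infty$ (since $K_j \leq K_j^\pmax < +\infty$ by Hypothesis~\ref{hypothesis:2}), the standard convergence of Riemann sums for continuous integrands on a box yields $\sum_{\bfbeta} \Delta\, s(\bfk_\bfbeta)\, f(\bfk_\bfbeta) \to \int_{\overline\KK} s(\bfk)\, f(\bfk)\, d\bfk$. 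This establishes the desired limit for every $f\in C_b(\RR^3)$, i.e. the narrow convergence $m_G^\nu \to m_G$.

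The convergence $\eta_\nu \to 1$ is then the special case $f\equiv 1$: there $\int_{\RR^3} f\, m_G^\nu(d\bfk) = \sum_{\bfbeta} s_\bfbeta^\Delta = \eta_\nu$ and $\int_{\RR^3} f\, m_G(d\bfk) = \int_{\overline\KK} s(\bfk)\, d\bfk = 1$ by Eq.~\eqref{eq:eq23}; equivalently, $\eta_\nu$ is itself the midpoint Riemann sum of $s$ over $\overline\KK$ and hence converges to $1$.

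The one substantive point — and thus the main, rather mild, obstacle — is the convergence of the midpoint Riemann sums for an integrand that is only assumed continuous, which must be justified through uniform continuity on the compact box $\overline\KK$ (Heine--Cantor) rather than through any differentiability-based error estimate. The usual distinction between narrow and vague convergence causes no difficulty here: since every $m_G^\nu$ and $m_G$ is carried by the single fixed compact set $\overline\KK$, testing against $C_b(\RR^3)$ amounts to testing against $C(\overline\KK)$, and in particular the constant function $f\equiv 1$ is an admissible test function, which is exactly what secures $\eta_\nu\to 1$ as part of the narrow convergence.
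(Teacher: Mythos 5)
Your proof is correct and follows essentially the same route as the paper: both reduce narrow convergence to the convergence of the midpoint Riemann sums $\Delta\sum_{\bfbeta} f(\bfk_\bfbeta)\,s(\bfk_\bfbeta)$ towards $\int_{\overline\KK} f(\bfk)\,s(\bfk)\,d\bfk$ for continuous $f$, and obtain $\eta_\nu\to 1$ as the special case $f\equiv 1$. The only difference is that you spell out what the paper leaves as ``it is known that'' (uniform continuity of $fs$ on the compact box via Heine--Cantor) and make explicit the reduction from test functions in $C_b(\RR^3)$ to $C(\overline\KK)$, which slightly strengthens the write-up without changing the argument.
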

%
%
\begin{proof} (Lemma~\ref{lemma:4}). We have to prove that $\forall f\in C^0(\overline\KK)$, the sequence
$m_G^\nu(f) = \int_{\overline\KK} f(\bfk)\,$ $m_G^\nu(d\bfk) =\Delta \sum_{\bfbeta\in\curB} f(\bfk_\bfbeta)\, s(\bfk_\bfbeta)$
converges towards $m_G(f)=\int_{\overline\KK} f(\bfk)\,m_G(d\bfk)= \int_{\overline\KK} f(\bfk)$ $s(\bfk)\,d\bfk$.
Since the function $\bfk\mapsto f(\bfk)\, s(\bfk)$ is continuous on $\overline\KK$, it is known that for $\nu_s\rightarrow +\infty$ (that is to say for
$\nu\rightarrow +\infty$), $\Delta\sum_{\bfbeta\in\curB} f(\bfk_\bfbeta)\, s(\bfk_\bfbeta) \rightarrow \int_{\overline\KK} f(\bfk)\,s(\bfk)\,d\bfk$.
Taking $f(\bfk)=1$ for all $\bfk\in\overline\KK$ yields $m_G(1)=\int_{\overline\KK} s(\bfk)\,d\bfk = 1$ and $m_G^\nu(1)=\Delta \sum_{\bfbeta\in\curB}s(\bfk_\bfbeta)
=\sum_{\bfbeta\in\curB} s_\bfbeta^\Delta =\eta_\nu$. Therefore, $\{\eta_\nu\}_\nu$ converges towards $1$.
\end{proof}
%
\begin{hypothesis}[{\textit{Choice of}} $\nu=(\nu_s)^3$] \label{hypothesis:3}
Let $s$ be the s.d.f satisfying Hypothesis~\ref{hypothesis:2}. Let us consider the spectral domain sampling introduced in Definition~\ref{definition:2}. Using Lemma~\ref{lemma:4}, we will assume that $\nu$ is chosen sufficiently large in order that $\vert \sum_{\bfbeta\in\curB} s_\bfbeta^\Delta - 1\vert \leq \epsilon_s \ll 1$
and consequently, we will write $\sum_{\bfbeta\in\curB} s_\bfbeta^\Delta \simeq 1$.
\end{hypothesis}
%
%
\begin{definition}[{\textit{Dimensionless spectral density function}}] \label{definition:3}
The spectral density function $\bfk\mapsto s(\bfk)$, which verifies Hypothesis~\ref{hypothesis:2}, is written for all $\bfk$ in $\RR^3$ as
$s(k_1,k_2,k_3) = (K_1 K_2 K_3)^{-1}\chi(k_1/K_1,k_2/K_2,k_3/K_3)$ in which $\chi$ is a given function $\bftau=(\tau_1,\tau_2,\tau_3)\mapsto \chi(\tau_1,\tau_2,\tau_3):\RR^3 \rightarrow\RR^+$ with compact support $[-1,1]^3$.
\end{definition}
Function $\chi$  has the same properties as $s$: $\chi(-\bftau) = \chi(\bftau)$, quadrant symmetry, and continuity. For $j=1,2,3$, the change of variable $\tau_j = k_j/K_j$ yields $s(\bfk)\, d\bfk = \chi(\bftau)\ d\bftau$ and thus $m_G(d\bfk) = \mu_G(d\bftau)$ with $\mu_G(d\bftau) = \chi(\bftau)\, d\bftau$, and consequently, Eq.~\eqref{eq:eq23} yields
\begin{equation} \label{eq:eq32}
\int_{\RR^3} \chi(\bftau)\, d\bftau =\int_{[-1,1]^3}  \chi(\bftau)\, d\bftau = 1\, .
\end{equation}
The dimensionless spectral domain sampling is directly deduced from Definition~\ref{definition:2},
\begin{equation} \label{eq:eq33}
\{\bftau_\bfbeta = (\tau_{\beta_1},\tau_{\beta_2},\tau_{\beta_3}) , \bfbeta\in\curB\} \,\, , \,\,
\tau_{\beta_j} = -1 +(\beta_j -\frac{1}{2})\,\frac{2}{\nu_s} \,\, , \,\, j\in\{1,2,3\} \, .
\end{equation}
The discretization $\mu_G^\nu(d\bftau)$ of $\mu_G(d\bftau)$, such that $\mu_G^\nu(d\bftau)= m_G^\nu(d\bfk)$, is written as
\begin{equation} \label{eq:eq34}
\mu_G^\nu(d\bftau) = \sum_{\bfbeta\in \curB} \chi^\Delta_\bfbeta \,\, \delta_{\bftau_\bfbeta}(\bftau)\quad , \quad
\chi^\Delta_\bfbeta =({2}/{\nu_s})^3 \, \chi(\bftau_\bfbeta) \, ,
\end{equation}
in which $\delta_{\bftau_\bfbeta} = \otimes_{j=1}^3 \delta_{\tau_{\beta_j}}(\tau_j)$ and where, from Hypothesis~\ref{hypothesis:3},
\begin{equation} \label{eq:eq34plus}
\sum_{\bfbeta\in\curB} \, \chi_\bfbeta^\Delta \simeq 1\, .
\end{equation}
Definition~\ref{definition:3} implies that measure $\mu_G^\nu(d\bftau)$ is independent of $K_1$, $K_2$, and $K_3$.
In order to introduce the probability model of the spectral measure, we start by defining an adapted parameterization $[y]$ that takes into account quadrant symmetry.
%
\begin{definition}[{\textit{Parameterization of the discretized dimensionless spectral measure}}] \label{definition:4}
Let $\widehat\nu_s = \nu_s/2$ ($\nu_s$ is even).
Let $\curC_y$ be the subset of $\MM_{3,\widehat\nu_s}$ defined by
\begin{equation} \label{eq:eq34bis}
\curC_y = \{ \, [y]\in\MM_{3,\widehat\nu_s} \,\, , \, [y]_{j\widehat\beta} \in [0,1]\,\,\, \hbox{for}  \,\,\, j=1,2,3 \,\,\, \hbox{and} \,\,\, \widehat\beta =1,\ldots , \widehat\nu_s\} \, .
\end{equation}
Let $[\underline y]$ be in $\curC_y$ such that $[\underline y]_{j\widehat\beta} =1/2$ for $j=1,2,3$ and $\widehat\beta =1,\ldots , \widehat\nu_s$.

Let $\widehat\curB = \{\widehat\bfbeta = (\widehat\beta_1,\widehat\beta_2,\widehat\beta_3), \widehat\beta_j = 1,\ldots ,\widehat\nu_s\} \subset \curB$ be the set of
$\widehat\nu = (\widehat\nu_s)^3 = \nu/8$ elements.
We define the finite family  of functions $[y]\mapsto a_{\widehat\bfbeta}([y]) : \curC_y\rightarrow \RR$ such that
$a_{\widehat\bfbeta}([y]) =\sqrt{\chi_{\widehat\bfbeta}^\Delta}\, q_{\widehat\bfbeta}([y];\delta_s)$, in which $\delta_s > 0$ is a hyperparameter that will allow the level of spectrum uncertainties to be controlled and where $[y]\mapsto q_{\widehat\bfbeta}([y];\delta_s)$ is any given continuous real function on $\curC_y$ such that $q_{\widehat\bfbeta}([\underline y];\delta_s) = 1$.  For all $[y]\in\curC_y$, let $\{a_\bfbeta([y]),\bfbeta\in\curB\}$ be the $\nu$ real numbers that are directly constructed from
$\{ a_{\widehat\bfbeta}([y]),\widehat\bfbeta\in\widehat\curB\}$ using the quadrant symmetry (see Hypothesis~\ref{hypothesis:2}); an example of such a construction is given in Example~\ref{example:1}-(iv).
For all $\bfbeta\in\curB$, we define the function $[y]\mapsto \widetilde\chi_\bfbeta^\Delta ([y]): \curC_y\rightarrow \RR^+$ such that
\begin{equation} \label{eq:eq34ter}
\widetilde\chi_\bfbeta^\Delta ([y]) = a_\bfbeta([y])^2 (\sum_{\bfbeta'\in\curB} a_{\bfbeta'}([y])^2)^{-1}\, .
\end{equation}
The dimensionless spectral measure $\widetilde\mu_G^\nu(d\bftau; [y])$ for $[y]$ given in $\curC_y$ is then defined by
\begin{equation} \label{eq:eq34quar}
\widetilde\mu_G^\nu(d\bftau; [y]) = \sum_{\bfbeta\in\curB} \widetilde\chi_\bfbeta^\Delta ([y])\, \delta_{\bftau_\bfbeta}(\bftau) \, .
\end{equation}
Note that $[y]$ is one parameter of $\bfcurS$ and $[\underline y]$ is the corresponding parameter of $\underline\bfcurS$.
\end{definition}
%
\begin{proposition}[Random discretized dimensionless spectral measure] \label{proposition:2}
Let us consider Definitions~\ref{definition:3} and \ref{definition:4}. It is assumed that Eq.~\eqref{eq:eq34plus} holds.

\noindent (i) $\forall\bfbeta\in\curB$, function $[y]\mapsto\widetilde\chi_\bfbeta^\Delta ([y])$ is continuous on $\curC_y$ (and thus bounded on $\curC_y$), is such that $\widetilde\chi_\bfbeta^\Delta ([\underline  y]) \simeq \chi_\bfbeta^\Delta$, and
$\forall\, [y]\in\curC_y$, $\sum_{\bfbeta\in\curB}\, \widetilde\chi_\bfbeta^\Delta ([y]) = 1$.

\noindent (ii) Let $[\bfY]$ be the $\MM_{3,\widehat\nu_s}$-valued random variable, defined on $(\Theta,\curT,\curP)$, whose support of its probability measure is $\curC_y\subset\MM_{3,\widehat\nu_s}$, and such that
$\{ [\bfY]_{j\widehat\beta} \, , \, j\in\{1,2,3\}\, , \, \widehat\beta \in\{ 1,\ldots,\widehat\nu_s\}\}$ are $3\,\widehat\nu_s$ independent uniform random variables on $[0,1]$. Its mean value is $E\{[\bfY]\} = \int_{\curC_y} [y]\, P_{[\bfY]}(dy) = \int_{\curC_y} [y]\, dy = [\underline y]$. For all $\widehat\bfbeta\in\widehat\curB$,
$A_{\widehat\bfbeta} = a_{\widehat\bfbeta}([\bfY])$ is a  second-order real-valued random variable.

\noindent (iii) $\forall\bfbeta\in\curB$, $\widetilde\chi_\bfbeta^\Delta ([\bfY])$ is a second-order positive-valued random variable, defined on $(\Theta,\curT,\curP)$ such that $\sum_{\bfbeta\in\curB} \widetilde\chi_\bfbeta^\Delta ([\bfY]) = 1$ almost surely.

\noindent (iv) The  dimensionless spectral measure $\widetilde\mu_G^\nu(d\bftau;[y])$ for given $[y]$ in $\curC_y$, is a bounded positive measure on $\RR^3$
and is such that $\widetilde\mu_G^\nu(d\bftau;[\underline y]) \simeq \mu_G^\nu(d\bftau)$.  For all $[y]\in\curC_y$,
$\widetilde\mu_G^\nu(\RR^3;[\underline y]) = \sum_{\bfbeta\in\curB}\, \widetilde\chi_\bfbeta^\Delta ([\underline  y]) = 1$.
\end{proposition}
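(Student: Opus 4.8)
The plan is to dispatch the four statements in order, reducing (iii) and (iv) to (i)--(ii) plus a single continuity/positivity argument that carries the real content.

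For (i), I would first observe that $a_{\widehat\bfbeta}([y]) = \sqrt{\chi_{\widehat\bfbeta}^\Delta}\, q_{\widehat\bfbeta}([y];\delta_s)$ is continuous on $\curC_y$ because $q_{\widehat\bfbeta}(\cdot;\delta_s)$ is continuous by Definition~\ref{definition:4}. Since each $a_\bfbeta([y])$ for $\bfbeta\in\curB$ is produced from the values $\{a_{\widehat\bfbeta}([y])\}_{\widehat\bfbeta\in\widehat\curB}$ by the quadrant-symmetry copy rule (index/sign reflection only, no further arithmetic), $[y]\mapsto a_\bfbeta([y])$ is continuous too, hence so are $a_\bfbeta([y])^2$ and the denominator $D([y]) = \sum_{\bfbeta'\in\curB} a_{\bfbeta'}([y])^2$. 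The ratio $\widetilde\chi_\bfbeta^\Delta([y]) = a_\bfbeta([y])^2/D([y])$ of Eq.~\eqref{eq:eq34ter} is then continuous provided $D$ stays strictly positive on $\curC_y$: this is the one delicate point, and I would secure it by noting that $D$ is continuous on the compact box $\curC_y$ and does not vanish when the factors $q_{\widehat\bfbeta}$ are positive-valued (the natural choice), because then $D([y])\geq \sum_{\widehat\bfbeta} \chi_{\widehat\bfbeta}^\Delta q_{\widehat\bfbeta}([y])^2 > 0$ as soon as some $\chi_{\widehat\bfbeta}^\Delta>0$, which holds since $\sum_{\bfbeta}\chi_\bfbeta^\Delta\simeq 1$ by Eq.~\eqref{eq:eq34plus}. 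Compactness then gives $\inf_{\curC_y} D>0$, so continuity holds and boundedness on $\curC_y$ comes for free. The normalization $\sum_{\bfbeta\in\curB}\widetilde\chi_\bfbeta^\Delta([y]) = 1$ is immediate from Eq.~\eqref{eq:eq34ter}, the numerators summing to $D([y])$. Finally, at $[\underline y]$ the condition $q_{\widehat\bfbeta}([\underline y];\delta_s)=1$ gives $a_{\widehat\bfbeta}([\underline y])^2 = \chi_{\widehat\bfbeta}^\Delta$, and since $\chi_\bfbeta^\Delta = (2/\nu_s)^3\chi(\bftau_\bfbeta)$ inherits the quadrant symmetry of $\chi$ (Definition~\ref{definition:3}), the copy rule yields $a_\bfbeta([\underline y])^2 = \chi_\bfbeta^\Delta$ for every $\bfbeta\in\curB$; hence $\widetilde\chi_\bfbeta^\Delta([\underline y]) = \chi_\bfbeta^\Delta/\sum_{\bfbeta'}\chi_{\bfbeta'}^\Delta$, and $\sum_{\bfbeta'}\chi_{\bfbeta'}^\Delta\simeq 1$ delivers $\widetilde\chi_\bfbeta^\Delta([\underline y])\simeq\chi_\bfbeta^\Delta$, where the $\simeq$ is exactly the slack in Hypothesis~\ref{hypothesis:3}.

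For (ii), I would construct $[\bfY]$ as the $\MM_{3,\widehat\nu_s}$-valued variable whose $3\widehat\nu_s$ entries are independent uniforms on $[0,1]$, i.e. the product of the one-dimensional laws; its support is exactly $[0,1]^{3\widehat\nu_s} = \curC_y$, and $E\{[\bfY]_{j\widehat\beta}\}=1/2$ for every entry gives $E\{[\bfY]\}=[\underline y]$. Because $a_{\widehat\bfbeta}$ is continuous on the compact $\curC_y$ it is bounded there, so $A_{\widehat\bfbeta}=a_{\widehat\bfbeta}([\bfY])$ is a bounded, hence second-order, real random variable.

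Statement (iii) follows by composition: by (i) each $\widetilde\chi_\bfbeta^\Delta$ is continuous and nonnegative on the compact $\curC_y$, hence bounded, so $\widetilde\chi_\bfbeta^\Delta([\bfY])$ is a second-order positive-valued random variable on $(\Theta,\curT,\curP)$; and since $[\bfY]\in\curC_y$ almost surely while $\sum_{\bfbeta\in\curB}\widetilde\chi_\bfbeta^\Delta([y])=1$ for every $[y]\in\curC_y$ by (i), the identity $\sum_{\bfbeta\in\curB}\widetilde\chi_\bfbeta^\Delta([\bfY])=1$ holds almost surely. For (iv), $\widetilde\mu_G^\nu(d\bftau;[y])$ of Eq.~\eqref{eq:eq34quar} is a finite nonnegative combination of Dirac masses whose weights sum to $1$ by (i), hence a bounded positive measure with $\widetilde\mu_G^\nu(\RR^3;[y])=\sum_{\bfbeta\in\curB}\widetilde\chi_\bfbeta^\Delta([y])=1$, in particular at $[\underline y]$; comparing it weight-by-weight with $\mu_G^\nu(d\bftau)=\sum_{\bfbeta\in\curB}\chi_\bfbeta^\Delta\,\delta_{\bftau_\bfbeta}(\bftau)$ of Eq.~\eqref{eq:eq34} and invoking $\widetilde\chi_\bfbeta^\Delta([\underline y])\simeq\chi_\bfbeta^\Delta$ from (i) gives $\widetilde\mu_G^\nu(d\bftau;[\underline y])\simeq\mu_G^\nu(d\bftau)$. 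The only genuine obstacle in the whole argument is the strict positivity of the denominator $D([y])$ in (i); once that is in hand, everything else is continuity/compactness bookkeeping together with the normalization identity built into Eq.~\eqref{eq:eq34ter}.
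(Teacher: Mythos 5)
Your proof is correct. There is essentially nothing in the paper to compare it against: the paper's own proof of Proposition~\ref{proposition:2} reads ``easy to prove and left to the reader,'' and your argument is the natural continuity/compactness bookkeeping the author evidently intends --- continuity of the $q_{\widehat\bfbeta}$ propagates through the quadrant-symmetry copies to the numerator and the denominator of Eq.~\eqref{eq:eq34ter}, the normalization $\sum_{\bfbeta\in\curB}\widetilde\chi_\bfbeta^\Delta([y])=1$ is the identity $D([y])/D([y])=1$, the condition $q_{\widehat\bfbeta}([\underline y];\delta_s)=1$ together with Hypothesis~\ref{hypothesis:3} gives both $\simeq$ statements, and (ii)--(iv) follow by composition with $[\bfY]$. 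The one substantive point, which you correctly isolate, is the strict positivity of $D([y])=\sum_{\bfbeta'\in\curB}a_{\bfbeta'}([y])^2$: Definition~\ref{definition:4} requires only that each $q_{\widehat\bfbeta}(\cdot;\delta_s)$ be continuous with value $1$ at $[\underline y]$, and this alone does not exclude a point $[y_0]\in\curC_y$ at which all $a_{\widehat\bfbeta}$ (hence all their quadrant copies) vanish simultaneously, where $\widetilde\chi_\bfbeta^\Delta([y_0])$ is not even defined. This is not a vacuous worry: in Example~\ref{example:1}, if $\delta_j\geq 1/\sqrt{3}$ one may take $[y_0]_{j\widehat\beta_j}=1/2-1/(\sqrt{12}\,\delta_j)$ for every $\widehat\beta_j$ in a single row $j$, which makes every product $a_{\widehat\bfbeta}([y_0])$ vanish and $D([y_0])=0$. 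So your added hypothesis that the $q_{\widehat\bfbeta}$ are positive-valued (satisfied in Example~\ref{example:1} exactly when $\delta_j<1/\sqrt{3}$, in particular for the values $\delta_j\leq 0.4$ used in Section~\ref{sec:Section6}), or some equivalent nondegeneracy condition on the denominator, is genuinely needed; the proposition as literally stated cannot be proved without it, and your proof supplies the correct repair.
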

%
\begin{proof} (Proposition~\ref{proposition:2}).
This proposition is easy to prove and is left to the reader.
\end{proof}
%
\begin{example}[{\textit{Illustration of a construction for a separable spatial correlation structure}}] \label{example:1}
\noindent{\textit{(i) Spectral density function}}. $\forall\bfk=(k_1,k_2,k_3)\in \RR^3$,
$s(\bfk)=\Pi_{j=1}^3 s_j(k_j)$. For $j=1,2,3$, $s_j(k_j) = K_j^{-1}\, (1-\vert k_j\vert/K_j)\,\11_{[-K_j,K_j]}(k_j)$ and thus, supp $s_j =[-K_j,K_j]$,
$s_j(-k_j)=s_j(k_j)$ (yielding $s(-\bfk)=s(\bfk)$ and the quadrant symmetry), and $\int_{[-K_j,K_j]} s_j(k_j)\, dk_j = 1$.

\noindent {\textit{(ii) Correlation function and spatial correlation length}}. For all $\bfzeta =(\zeta_1,\zeta_2,\zeta_3)\in\RR^3$, $\rho_G(\bfzeta)=\Pi_{j=1}^3 \rho_j(\zeta_j)$ and for $j=1,2,3$, $\rho_j(\zeta_j) = \int_\RR e^{\iota\,k_j\zeta_j} \, s_j(k_j)\, dk_j$, $\rho_j(0)=1$, and the spatial correlation length is $L_{cj} = \int_0^{+\infty} \vert\rho_j(\zeta_j)\vert\, d\zeta_j = \pi\, s_j(0) = \pi/K_j$.

\noindent {\textit{(iii) Dimensionless spectral density function and spectral sampling}}. $\forall\bftau = (\tau_1,\tau_2,\tau_3)$, $\chi(\tau) = \Pi_{j=1}^3 \, \chi_j(\tau_j)$. For $ j=1,2,3$, $\chi_j(\tau_j) = (1-\vert\tau_j\vert)\, \11_{[-1,1]}(\tau_j)$ and therefore, supp $\chi_j = [-1,1]$, $\chi_j(-\tau_j) = \chi_j(\tau_j)$, and
$\int_\RR \chi_j(\tau_j)\, d\tau_j =1$. For all $\bfbeta=(\beta_1,\beta_2,\beta_3)\in\curB$, $\chi_\bfbeta^\Delta =\Pi_{j=1}^3 \chi_{j\beta_j}^\Delta$
with $\chi_{j\beta_j}^\Delta = (2/\nu_s) \, \chi_j(\tau_{\beta_j})$.

\noindent {\textit{(iv) Construction of}} $a_\bfbeta([y])$. For $j\in\{1,2,3\}$, $\widehat\beta_j \in\{1,\ldots , \widehat\nu_s\}$,
$\widehat\bfbeta = (\widehat\beta_1,\widehat\beta_2,\widehat\beta_3)$, and $\forall [y]\in\curC_y$,
$q_{\widehat\bfbeta}([y];\delta_s) = \Pi_{j=1}^3 q_{j\widehat\beta_j}([y];\delta_j)$ in which $q_{j\widehat\beta_j}([y];\delta_j)= 1+\sqrt{12}\, \delta_j\,([y]_{j\widehat\beta_j} -1/2)$ with $\delta_j > 0$ the hyperparameter. We thus have $a_{\widehat\bfbeta}([y]) = \Pi_{j=1}^3 a_{j\widehat\beta_j}([y])$ in which
$a_{j\widehat\beta_j}([y])=\sqrt{\chi^\Delta_{j\widehat\beta_j}}\, q_{j\widehat\beta_j}([y],\delta_j)$ and for $\beta_j\in\{\widehat\nu_s \! + \! 1,\ldots , 2\,\widehat\nu_s\}$, $a_{j\beta_j}([y])= a_{j, (2\,\widehat\nu_s+1-\beta_j)}([y])$.

\noindent {\textit{(v) Random variable}} $A_{\widehat\bfbeta}$ {\textit{and hyperparameter}} $\delta_s$. For $j\in\{1,2,3\}$ and $\widehat\beta_j \in\{1,\ldots , \widehat\nu_s\}$, the mean value and the second-order moment of random variable $A_{j\widehat\beta_j} = a_{j\widehat\beta_j}([\bfY])$ are $E\{A_{j\widehat\beta_j}\} = \sqrt{\chi^\Delta_{j\widehat\beta_j}}$ and $E\{A_{j\widehat\beta_j}^2\} = \chi^\Delta_{j\widehat\beta_j}\, (1+\delta_j^2)$. Since the random variables
$\{A_{j\widehat\beta_j}\}_{j,\widehat\beta_j}$ are independent, the mean value and the second-order moment of the random variable
$A_{\widehat\bfbeta} = a_{\widehat\bfbeta}([\bfY]) = \Pi_{j=1}^3 A_{j\widehat\beta_j}$ are
$E\{A_{\widehat\bfbeta}\} = \sqrt{\chi^\Delta_{\widehat\bfbeta}}$ and $E\{A_{\widehat\bfbeta}^2\} = \chi^\Delta_{\widehat\bfbeta}\, \Pi_{j=1}^3(1+\delta_j^2)$.
Defining the hyperparameter $\delta_s$ as $\delta_s^2= E\{(A_{\widehat\bfbeta}-\sqrt{\chi^\Delta_{\widehat\bfbeta}})^2\}/ \chi^\Delta_{\widehat\bfbeta}$, it can be seen that we have $\delta_s^2= (\Pi_{j=1}^3(1+\delta_j^2)) - 1 > 0$, which is independent of $\widehat\bfbeta$.

\noindent {\textit{(vi) Discretized dimensionless spectral measure}}.  Eq.~\eqref{eq:eq34ter} yields, $\forall\bfbeta=(\beta_1,\beta_2,\beta_3)\in\curB$,
$\widetilde\chi_\bfbeta^\Delta ([y]) = \Pi_{j=1}^3 \,\widetilde\chi_{j\beta_j}^\Delta ([y])$
in which $\widetilde\chi_{j\beta_j}^\Delta ([y]) = a_{j\beta_j}([y])^2 (\sum_{\bfbeta'\in\curB} a_{\bfbeta'}([y])^2)^{-1/3}$.
\end{example}
%
%
\begin{definition}[{\textit{Spectrum parameters}} $\bfw$ {\textit{and}} $\bfcurS$ {\textit{and their probabilistic models}} $\bfW$ {\textit{and}} $\bfS$] \label{definition:5}
Let $\bfw=(w_1,w_2,w_3)$ in which $w_j=\pi/K_j$ (this parameter allows the support of the spectral measure to be controlled).
Let $\curC_w =\{\bfw\in\RR^3; w_j\in [w_j^\pmin,w_j^\pmax]$ for $j=1,2,3\}$ be the compact subset of $\RR^3$, in which $0< w_j^\pmin = \pi/K_j^\pmax < w_j^\pmax = \pi/K_j^\pmin <+\infty$ (see Hypothesis~\ref{hypothesis:2}).
Parameter $\bfw$ is modeled by a $\RR^3$-valued random variable $\bfW$, defined on $(\Theta,\curT,\curP)$, independent of $[\bfY]$, whose support of its given probability measure $P_\bfW(d\bfw)$ is $\curC_w$.
We define the parameter $\bfcurS$ as $\{\bfw, [y]\}$, which  takes its values in the subset $\curC_\curS = \curC_w\times \curC_y$ of $\RR^3\times \MM_{3,\widehat\nu_s}$. The probabilistic model of $\bfcurS$ is the  $\RR^3\times \MM_{3,\widehat\nu_s}$- valued random variable $\bfS=\{\bfW,[\bfY]\}$ whose probability measure is the product of measures $P_\bfS  = P_\bfW(d\bfw)\otimes P_{[\bfY]}(dy)$ whose compact support is $\curC_\curS$.
\end{definition}
%
\begin{definition} [{\textit{Normalized Gaussian random field}} $G^\nu(\cdot;\bfcurS)$ {\textit{given}} $\bfcurS = \{ \bfw, \hbox{[} y \hbox{]}\}\in \curC_\curS = $ $ \curC_w\times \curC_y$ ] \label{definition:6}
Let $\nu=(\nu_s)^3$ be fixed.
Let $\{Z_\bfbeta, \bfbeta\in\curB\}$ and $\{\Phi_\bfbeta, \bfbeta\in\curB\}$ be $2\, \nu$ independent random variables on $(\Theta,\curT,\curP)$, which are independent of $\bfW$ and $[\bfY]$. For all $\bfbeta\in\curB$,  $Z_\bfbeta = \sqrt{-log \Psi_\bfbeta}$  in which $\Psi_\bfbeta$ is uniform of $[0,1]$ and $\Phi_\bfbeta$ is uniform on $[0, 2\,\pi]$. Let $P_\bfZ(d\bfz)$ and $P_\bfPhi(d\bfvarphi)$ be the probability measures on $\RR^\nu$ of the $\RR^\nu$-valued random variables $\bfZ=\{Z_\bfbeta,\bfbeta\in\curB\}$ and $\bfPhi =\{\Phi_\bfbeta,\bfbeta\in\curB\}$. The unbounded support $\curC_z$ of $P_\bfZ(d\bfz)$ is
$\curC_z = \{\bfz=\{z_\bfbeta,\bfbeta\in\curB\}, z_\bfbeta > 0\}\subset\RR^\nu$ and the compact support $\curC_\varphi$ of $P_\bfPhi(d\bfvarphi)$ is
$\curC_\varphi = \{\bfvarphi=\{\varphi_\bfbeta,\bfbeta\in\curB\}, \varphi_\bfbeta \in [0, 2\,\pi]\}\subset\RR^\nu$.
Let $\bfx\mapsto g^\nu(\bfx;\bfcurS,\bfz,\bfvarphi) : \RR^3\rightarrow \RR$ be such that, for all $\{\bfcurS,\bfz,\bfvarphi\}\in \curC_\curS\times \curC_z\times \curC_\varphi$,
\begin{equation} \label{eq:eq35}
g^\nu(\bfx;\bfcurS,\bfz,\bfvarphi) = \sum_{\bfbeta\in\curB} \sqrt{2\widetilde\chi_\bfbeta^\Delta ([y])}\, z_\bfbeta\, \cos (\varphi_\bfbeta +\sum_{j=1}^3 \frac{\pi}{w_j}\tau_{\beta_j} x_j) \, .
\end{equation}
For all $\bfw\in\curC_w$ and $[y]\in\curC_y$, we define the real-valued random field $\{G^\nu(\bfx;\bfcurS), \bfx\in\RR^3\}$ with
$\bfcurS = \{ \bfw, [y]\}\in \curC_\curS = \curC_w\times \curC_y$, such that
\begin{equation} \label{eq:eq36}
G^\nu(\bfx;\bfcurS) = g^\nu(\bfx;\bfcurS,\bfZ,\bfPhi) \, .
\end{equation}
Equation~\eqref{eq:eq36} with Eq.~\eqref{eq:eq35} corresponds to a finite discretization of the stochastic integral representation with a stochastic spectral measure for homogeneous second-order mean-square continuous random fields \cite{Doob1953,Guikhman1980,Kree1986}.
\end{definition}
%
\begin{proposition}[Properties of random field $G^\nu(\cdot;\bfcurS)$] \label{proposition:3}
For all $\bfcurS = \{ \bfw, [y]\}\in \curC_\curS = \curC_w\times \curC_y$, the real-valued random field $\{ G^\nu(\bfx;\bfcurS),\bfx\in\RR^3\}$ is Gaussian, homogeneous, second-order, mean-square continuous, and normalized,
\begin{equation} \label{eq:eq37}
E\{G^\nu(\bfx;\bfcurS)\} = 0 \quad , \quad  E\{G^\nu(\bfx;\bfcurS)^2\} = 1 \quad , \quad \forall\bfx\in\RR^3 \, .
\end{equation}
Its dimensionless spectral measure $\widetilde\mu_G^\nu(d\bftau;[y])$, expressed with the dimensionless spectral variable $\bftau = (\tau_1,\tau_2,\tau_3)$ with $\tau_j=k_j/K_j$, is the spectral measure defined by Eq.~\eqref{eq:eq34quar}.
\end{proposition}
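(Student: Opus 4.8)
\section*{Proof proposal (Proposition~\ref{proposition:3})}

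The plan is to read Eq.~\eqref{eq:eq35}--\eqref{eq:eq36} as a (Box--Muller) spectral representation and to verify each property in turn. First I would fix $\bfcurS=\{\bfw,[y]\}\in\curC_\curS$, so that the amplitudes $\widetilde\chi_\bfbeta^\Delta([y])$ and the phases $\theta_\bfbeta(\bfx)=\sum_{j=1}^3 (\pi/w_j)\,\tau_{\beta_j}x_j$ are deterministic, the only randomness sitting in the $2\nu$ independent variables $\{Z_\bfbeta,\Phi_\bfbeta\}_{\bfbeta\in\curB}$. Expanding each summand by the angle-addition formula, $z_\bfbeta\cos(\varphi_\bfbeta+\theta_\bfbeta(\bfx))=(z_\bfbeta\cos\varphi_\bfbeta)\cos\theta_\bfbeta(\bfx)-(z_\bfbeta\sin\varphi_\bfbeta)\sin\theta_\bfbeta(\bfx)$, I would introduce the pair $U_\bfbeta=\sqrt2\,Z_\bfbeta\cos\Phi_\bfbeta$ and $V_\bfbeta=\sqrt2\,Z_\bfbeta\sin\Phi_\bfbeta$.

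The crux is the following observation, which is the hardest step and the one everything rests on: since $Z_\bfbeta=\sqrt{-\log\Psi_\bfbeta}$ with $\Psi_\bfbeta$ uniform on $[0,1]$, the variable $Z_\bfbeta^2=-\log\Psi_\bfbeta$ is exponential with mean one, hence $\sqrt2\,Z_\bfbeta$ carries the modulus law of a planar standard Gaussian; combined with the independent uniform phase $\Phi_\bfbeta$ on $[0,2\pi]$ this is exactly the Box--Muller transform, so $(U_\bfbeta,V_\bfbeta)$ is a pair of independent $N(0,1)$ variables. Because the pairs are independent across $\bfbeta$, the whole family $\{U_\bfbeta,V_\bfbeta\}_{\bfbeta\in\curB}$ is a centred Gaussian vector. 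For fixed $\bfcurS$ one has $G^\nu(\bfx;\bfcurS)=\sum_{\bfbeta\in\curB}\sqrt{\widetilde\chi_\bfbeta^\Delta([y])}\,(U_\bfbeta\cos\theta_\bfbeta(\bfx)-V_\bfbeta\sin\theta_\bfbeta(\bfx))$, a fixed linear functional of this vector; evaluating at any finite set $\bfx^{(1)},\dots,\bfx^{(p)}$ exhibits $(G^\nu(\bfx^{(1)};\bfcurS),\dots,G^\nu(\bfx^{(p)};\bfcurS))$ as a linear image of a Gaussian vector, hence jointly Gaussian. This proves the field is Gaussian.

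Next I would compute the two moments. Mean zero is immediate from $E\{U_\bfbeta\}=E\{V_\bfbeta\}=0$, equivalently $E\{\cos\Phi_\bfbeta\}=E\{\sin\Phi_\bfbeta\}=0$. For the covariance, in the double sum for $E\{G^\nu(\bfx+\bfzeta;\bfcurS)\,G^\nu(\bfx;\bfcurS)\}$ every off-diagonal term $\bfbeta\neq\bfbeta'$ vanishes by independence and the zero mean of the phase, while each diagonal term is evaluated with $E\{Z_\bfbeta^2\}=1$ and the product-to-sum identity $\cos A\cos B=\tfrac12(\cos(A-B)+\cos(A+B))$, the $\cos(A+B)$ part averaging to zero over $\Phi_\bfbeta$. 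Using $\pi/w_j=K_j$ and $k_{j\beta_j}=K_j\tau_{\beta_j}$ gives $\theta_\bfbeta(\bfzeta)=\bfk_\bfbeta\cdot\bfzeta$, so that $\rho_{G^\nu}(\bfzeta)=\sum_{\bfbeta\in\curB}\widetilde\chi_\bfbeta^\Delta([y])\cos(\bfk_\bfbeta\cdot\bfzeta)$, which depends on $\bfzeta$ alone; the field is therefore second-order homogeneous, and strictly homogeneous since it is Gaussian with constant mean. This expression is the cosine transform of the atomic positive measure carrying mass $\widetilde\chi_\bfbeta^\Delta([y])$ at $\bfk_\bfbeta$, whose image under the dimensionless change of variables $\tau_j=k_j/K_j$ of Definition~\ref{definition:3} is exactly $\widetilde\mu_G^\nu(d\bftau;[y])$ of Eq.~\eqref{eq:eq34quar}, identifying the spectral measure. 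Setting $\bfzeta=\bfzero$ and invoking Proposition~\ref{proposition:2}-(i), namely $\sum_{\bfbeta\in\curB}\widetilde\chi_\bfbeta^\Delta([y])=1$, gives $E\{G^\nu(\bfx;\bfcurS)^2\}=1<+\infty$, so the field is second order and normalized, establishing Eq.~\eqref{eq:eq37}.

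Finally, mean-square continuity would follow at once from $E\{\vert G^\nu(\bfx+\bfzeta;\bfcurS)-G^\nu(\bfx;\bfcurS)\vert^2\}=2\,(\rho_{G^\nu}(\bfzero)-\rho_{G^\nu}(\bfzeta))$ together with the continuity of $\rho_{G^\nu}$: the latter is a finite sum of the continuous functions $\bfzeta\mapsto\cos(\bfk_\bfbeta\cdot\bfzeta)$, hence continuous at $\bfzeta=\bfzero$, so the right-hand side tends to zero as $\bfzeta\to\bfzero$. Apart from the Box--Muller identification, every step is a routine moment computation for a finite trigonometric sum, so I expect the only real obstacle to be making the Gaussianity argument fully rigorous.
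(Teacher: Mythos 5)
Your proposal is correct and follows essentially the same route as the paper: compute the mean and the covariance using the independence of $\{Z_\bfbeta,\Phi_\bfbeta\}$, identify the correlation function $\rho_G^\nu(\bfzeta;\bfcurS)=\sum_{\bfbeta\in\curB}\widetilde\chi_\bfbeta^\Delta([y])\cos(\bfk_\bfbeta\cdot\bfzeta)$, deduce normalization from Proposition~\ref{proposition:2}-(i), homogeneity from the lag-only dependence of a Gaussian field with constant mean, mean-square continuity from continuity of $\rho_G^\nu$, and read off the spectral measure of Eq.~\eqref{eq:eq34quar}. Your only departure is the explicit Box--Muller decomposition into the Gaussian pair $(U_\bfbeta,V_\bfbeta)$, which makes rigorous (via linear images of a Gaussian vector) the joint Gaussianity of the finite-dimensional distributions that the paper asserts in one line by stating that each summand $Z_\bfbeta\cos(\Phi_\bfbeta+\bfk_\bfbeta\cdot\bfx)$ is Gaussian.
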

%
\begin{proof} (Proposition~\ref{proposition:3}).
Since $\{Z_\bfbeta, \bfbeta\in\curB\}$ and $\{\Phi_\bfbeta, \bfbeta\in\curB\}$  are $2\,\nu$ independent random variables, it can easily be proven
that $G^\nu(\cdot;\bfcurS)$ is centered (first equation in Eq.~\eqref{eq:eq37}) and, $\forall\bfzeta=(\zeta_1,\zeta_2,\zeta_3)\in\RR^3$ and $\forall\bfcurS = \{ \bfw, [y]\}\in \curC_\curS = \curC_w\times \curC_y$,
\begin{equation} \label{eq:eq39}
\rho_G^\nu(\bfzeta;\bfcurS) =  E\{G^\nu(\bfx\!+\!\bfzeta;\bfcurS)\, G^\nu(\bfx;\bfcurS)\} =  \sum_{\bfbeta\in\curB} \widetilde\chi_\bfbeta^\Delta ([y])
   \, \cos (\sum_{j=1}^3 \frac{\pi}{w_j}\tau_{\beta_j} \zeta_j)  \, .
\end{equation}
Using Proposition~\ref{proposition:2}-(i)  yields the second equation in Eq.~\eqref{eq:eq37}.
For all $\bfbeta\in\curB$, the random variable $Z_\bfbeta\, \cos (\Phi_\bfbeta +\sum_{j=1}^3 \frac{\pi}{w_j}\tau_{\beta_j} x_j)$ is Gaussian and consequently, random field $G^\nu(\cdot;\bfcurS)$ is Gaussian.
Since $G^\nu(\cdot;\bfcurS)$ is a Gaussian random field with zero mean function and a correlation function that depends only on $\bfzeta$, $G^\nu(\cdot;\bfcurS)$ is homogeneous on $\RR^3$.
Since $\bfzeta\mapsto \rho_G^\nu(\bfzeta;\bfcurS)$ defined by Eq.~\eqref{eq:eq39} is continuous on $\RR^3$, $G^\nu(\cdot;\bfcurS)$ is mean-square continuous on $\RR^3$ and thus there exists a spectral measure given by Eq.~\eqref{eq:eq34quar}. Note that the spectral measure in $\bfk=(k_1,k_2,k_3)$ is such that
$\widetilde m_G^\nu(d\bfk;\bfcurS) = \widetilde\mu_G^\nu(d\bftau;[y])$ with
$\widetilde m_G^\nu(d\bfk;\bfcurS) = \sum_{\bfbeta\in\curB} \widetilde s_\bfbeta^\Delta(\bfw,[y])\, \delta_{\bfk_\bfbeta}(\bfk)$
in which $\widetilde s_\bfbeta^\Delta(\bfw,[y]) =\widetilde\chi_\bfbeta^\Delta([y])\, \Pi_{j=1}^3 (\pi/w_j)$ with $\bfcurS = \{\bfw,[y]\}$.
\end{proof}

\section{Non-Gaussian random field $\hbox{[} \CC(.;\bfcurS) \hbox{]}$ parameterized by $\bfcurS$ and random field $\hbox{[} \widetilde \CC \hbox{]}$ with uncertain spectral measure}
\label{sec:Section4}
In the construction of $G$ with an uncertain spectrum, the spectral measure $m_G(d\bfk)$ of $G$  is given (see Hypothesis~~\ref{hypothesis:2}). This is the reason why the convergence of the sequence $\{m_G^\nu(d\bfk)\}_\nu$ of measures towards $m_G(d\bfk)$ has been studied (see Lemma~\ref{lemma:4}).  The uncertain dimensionless spectrum, represented by $\widetilde\mu_G^\nu(d\bftau;[y])$ for $[y]$ given in $\curC_y$, is constructed from $\mu_G^\nu(d\bftau) = m_G^\nu(d\bfk)$ and constitutes the uncertain spectral measure of random field $G^\nu(\cdot; \bfcurS)$ given $\bfcurS=\{\bfw,[y]\}\in\curC_\curS=\curC_w\times\curC_y$.
Although a limit  $G^\infty(\cdot; \bfcurS)$ of random field $G^\nu(\cdot; \bfcurS)$ exits for $\nu\rightarrow +\infty$ (see \cite{Poirion1995}), a convergence analysis is not useful for the probabilistic construction that is proposed because the limit is not given (unknown). The value of $\nu = (\nu_s)^3$ (see Definition~\ref{definition:2}) is chosen sufficiently large in order that Hypothesis~\ref{hypothesis:3} be verified. Proposition~\ref{proposition:3} shows that, for all $\bfcurS\in\curC_\curS$, the random field
$G^\nu(\cdot; \bfcurS)$, defined by Eq.~\eqref{eq:eq36}, satisfies all the required properties (Gaussian, homogeneous, mean-square continuous, and normalization). We are therefore led to introduce the following definition in coherence with
Proposition~\ref{proposition:1}, Lemma~\ref{lemma:2}, and Remark~\ref{remark:1}.
%
\begin{definition} [{\textit{Random field}} $\hbox{[} \bfC(\cdot,\bfcurS)\hbox{]}$ {\textit{given}} $\bfcurS$] \label{definition:7}
We assume that $\nu$ is fixed and satisfies Hypothesis~\ref{hypothesis:3}.
The non-Gaussian random field $\bfC(\cdot,\bfcurS)$ given $\bfcurS\in\curC_\curS$ is defined by Eq.~\eqref{eq:eq11} in which the $21$ Gaussian random fields
$\{ G_{mn}(\bfx;\bfcurS),\bfx\in\RR^3\}_{1\leq m\leq n\leq 6}$ are replaced by $21$ independent copies of the Gaussian real-valued random field $\{G^\nu(\bfx;\bfcurS),\bfx\in\RR^3\}$ defined by Eq.~\eqref{eq:eq36}, and denoted by $\{G^\nu_{mn}(\bfx;\bfcurS),\bfx\in\RR^3\}_{1\leq m\leq n\leq 6}$.
For all $\bfcurS\in\curC_\curS$, $\bfx\in\RR^3$, and for $1\leq m\leq n\leq 6$, using Eq.~\eqref{eq:eq36} yields
\begin{equation} \label{eq:eq40}
G^\nu_{mn}(\bfx;\bfcurS) = g^\nu(\bfx;\bfcurS,\bfZ^{mn},\bfPhi^{mn}) \, .
\end{equation}
in which $\{\bfZ^{mn},\bfPhi^{mn}\}_{1\leq m\leq n\leq 6}$ are $21$ independent copies of $\RR^\nu$-valued random variables $\bfZ$ and $\bfPhi$
(see Definition~\ref{definition:6}), and we have
\begin{equation} \label{eq:eq41}
E\{G^\nu_{mn}(\bfx;\bfcurS)\} = 0 \quad , \quad  E\{G^\nu_{mn}(\bfx;\bfcurS)^2\} = 1 \, .
\end{equation}
\end{definition}
%
%
\begin{proposition}[Properties of the non-Gaussian $\MM^+_6$-valued random field $ \{\hbox{[} \bfC(\cdot;\bfcurS) \hbox{]}\} $] \label{proposition:4}
The non-Gaussian random field $[\bfC(\cdot;\bfcurS)]$, defined in Definition~\ref{definition:7} for a given uncertain spectral measure parameterized by $\bfcurS$, is a second-order random field such that
\begin{equation} \label{eq:eq43}
\Vert\, [\bfC(\bfx;\bfcurS)]\,\Vert_F \,\,\, \leq \,\,\Gamma_C \,\,\, a.s. \quad ,\quad \forall\bfx\in\RR^3 \, ,
\end{equation}
in which $\Gamma_C$ is a second-order positive-valued random variable, independent of $\bfx$ and $\bfcurS$, such that
\begin{equation} \label{eq:eq44}
E\{\Gamma_C^2\} = \underline\gamma_{2,C}^2 < +\infty \quad , \quad  E\{\Gamma_C^4\} = \underline\gamma_{4,C}^4 < +\infty\, .
\end{equation}
For all $\bfomega$ and $\bfomega'$ in $\RR^6$ and for all $\bfx$ in $\RR^3$,
\begin{equation} \label{eq:eq45}
\vert \langle [\bfC(\bfx;\bfcurS)]\,\bfomega\, , \bfomega' \rangle_2 \vert  \,\,\, \leq \,\, \Gamma_C \, \Vert\bfomega\Vert_2\, \Vert\bfomega'\Vert_2 \,\,\, a.s.
\end{equation}
\end{proposition}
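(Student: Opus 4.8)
The plan is to derive an almost-sure upper bound on $\tr\,[\bfC(\bfx;\bfcurS)]$ that is independent of $\bfx$ and $\bfcurS$, and then to exploit positive-definiteness to control $\Vert[\bfC(\bfx;\bfcurS)]\Vert_F$. Since $[\bfC(\bfx;\bfcurS)]=[\bfL(\bfx;\bfcurS)]^T[\bfL(\bfx;\bfcurS)]$ by Eq.~\eqref{eq:eq11}, this matrix is symmetric positive semidefinite, so its eigenvalues $\lambda_i\ge 0$ satisfy $\Vert[\bfC(\bfx;\bfcurS)]\Vert_F^2=\sum_i\lambda_i^2\le(\sum_i\lambda_i)^2=(\tr\,[\bfC(\bfx;\bfcurS)])^2$, whence $\Vert[\bfC(\bfx;\bfcurS)]\Vert_F\le\tr\,[\bfC(\bfx;\bfcurS)]=\Vert[\bfL(\bfx;\bfcurS)]\Vert_F^2=\sum_{1\le m\le n\le 6}[\bfL(\bfx;\bfcurS)]_{mn}^2$ (using that $[\bfL]$ is upper triangular). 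It therefore suffices to bound each squared entry of $[\bfL]$. By Proposition~\ref{proposition:1}-2) the off-diagonal entries obey $[\bfL(\bfx;\bfcurS)]_{mn}^2=\sigma_c^2\,G^\nu_{mn}(\bfx;\bfcurS)^2$, and by Proposition~\ref{proposition:1}-3) combined with Lemma~\ref{lemma:2}-(i) the diagonal entries obey $[\bfL(\bfx;\bfcurS)]_{mm}^2=2\sigma_c^2\,h(G^\nu_{mm}(\bfx;\bfcurS);\alpha_m)\le 2\sigma_c^2\,(2\alpha_m+G^\nu_{mm}(\bfx;\bfcurS)^2)$.

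The crucial step is a bound on $\vert G^\nu_{mn}(\bfx;\bfcurS)\vert$ free of $\bfx$ and $\bfcurS$. Starting from the explicit representation~\eqref{eq:eq35}--\eqref{eq:eq36}, I use $\vert\cos(\cdot)\vert\le 1$ and the Cauchy--Schwarz inequality to obtain
\[
\vert G^\nu_{mn}(\bfx;\bfcurS)\vert\le\sum_{\bfbeta\in\curB}\sqrt{2\,\widetilde\chi_\bfbeta^\Delta([y])}\;Z^{mn}_\bfbeta\le\sqrt{2\sum_{\bfbeta\in\curB}\widetilde\chi_\bfbeta^\Delta([y])}\;\Bigl(\sum_{\bfbeta\in\curB}(Z^{mn}_\bfbeta)^2\Bigr)^{1/2}.
\]
By Proposition~\ref{proposition:2}-(iii) one has $\sum_{\bfbeta\in\curB}\widetilde\chi_\bfbeta^\Delta([y])=1$ for every $[y]\in\curC_y$, so the first factor equals $\sqrt 2$ and the entire $\bfcurS$-dependence disappears. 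Writing $S^2_{mn}=\sum_{\bfbeta\in\curB}(Z^{mn}_\bfbeta)^2$, this yields $G^\nu_{mn}(\bfx;\bfcurS)^2\le 2\,S^2_{mn}$ almost surely, uniformly in $\bfx$ and $\bfcurS$. This is the main obstacle, and it is resolved exactly by the normalization $\sum_{\bfbeta}\widetilde\chi_\bfbeta^\Delta=1$, which is what renders the amplitude bound insensitive to the spectral parameter (and hence later to its randomization through $\bfS$).

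Assembling the two previous paragraphs, I set
\[
\Gamma_C=4\sigma_c^2\sum_{m=1}^6\alpha_m+4\sigma_c^2\sum_{1\le m\le n\le 6}S^2_{mn},
\]
which is independent of $\bfx$ and $\bfcurS$ and, by construction, satisfies $\Vert[\bfC(\bfx;\bfcurS)]\Vert_F\le\tr\,[\bfC(\bfx;\bfcurS)]\le\Gamma_C$ almost surely for all $\bfx\in\RR^3$, proving Eq.~\eqref{eq:eq43}. For the moment bounds~\eqref{eq:eq44}, I note that by Definition~\ref{definition:6} we have $(Z^{mn}_\bfbeta)^2=-\log\Psi^{mn}_\bfbeta$ with $\Psi^{mn}_\bfbeta$ uniform on $[0,1]$, so each $(Z^{mn}_\bfbeta)^2$ is exponentially distributed with unit rate; hence each $S^2_{mn}$ is a sum of $\nu$ independent unit-rate exponential variables and possesses finite moments of every order. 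As $\Gamma_C$ is an affine function of the finitely many variables $\{S^2_{mn}\}_{1\le m\le n\le 6}$, all its moments are finite, in particular $E\{\Gamma_C^2\}=\underline\gamma_{2,C}^2<+\infty$ and $E\{\Gamma_C^4\}=\underline\gamma_{4,C}^4<+\infty$; moreover $E\{\Vert[\bfC(\bfx;\bfcurS)]\Vert_F^2\}\le E\{\Gamma_C^2\}<+\infty$ confirms that the field is second-order.

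Finally, Eq.~\eqref{eq:eq45} follows from the estimate $\vert\langle[\bfC(\bfx;\bfcurS)]\,\bfomega,\bfomega'\rangle_2\vert\le\Vert[\bfC(\bfx;\bfcurS)]\,\bfomega\Vert_2\,\Vert\bfomega'\Vert_2\le\Vert[\bfC(\bfx;\bfcurS)]\Vert_2\,\Vert\bfomega\Vert_2\,\Vert\bfomega'\Vert_2$ (Cauchy--Schwarz and the definition of the operator norm), together with $\Vert[\bfC(\bfx;\bfcurS)]\Vert_2\le\Vert[\bfC(\bfx;\bfcurS)]\Vert_F\le\Gamma_C$, valid almost surely and uniformly in $\bfx$ and $\bfcurS$.
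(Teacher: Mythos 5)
Your proof is correct and follows essentially the same route as the paper's: the bound $\Vert[\bfC(\bfx;\bfcurS)]\Vert_F\le\Vert[\bfL(\bfx;\bfcurS)]\Vert_F^2$ (you via the trace of a positive semidefinite matrix, the paper via submultiplicativity of the Frobenius norm), Lemma~\ref{lemma:2}-(i) for the diagonal entries, the Cauchy--Schwarz estimate on $g^\nu$ combined with the normalization $\sum_{\bfbeta}\widetilde\chi_\bfbeta^\Delta([y])=1$ to eliminate the dependence on $\bfx$ and $\bfcurS$, the same form of $\Gamma_C$ (up to harmless constants), finiteness of moments from $(Z_\bfbeta^{mn})^2\sim\mathrm{Exp}(1)$, and the Frobenius-norm bound for Eq.~\eqref{eq:eq45}. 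The only nitpick is a citation slip: the identity $\sum_{\bfbeta}\widetilde\chi_\bfbeta^\Delta([y])=1$ for all $[y]\in\curC_y$ is Proposition~\ref{proposition:2}-(i), not (iii).
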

%
\begin{proof} (Proposition~\ref{proposition:4}).
For all $\bfx\in\RR^3$ and $\bfcurS\in\curC_\curS$, Proposition~\ref{proposition:1} and Definition~\ref{definition:7} yield
$[\bfC(\bfx;\bfcurS)] \! = \! [\bfL(\bfx;\bfcurS)]^T [\bfL(\bfx;\bfcurS)]$ in which
$[\bfL(\bfx;\bfcurS)]_{mn} \! = \! \sigma_c\, G_{mn}^\nu(\bfx;\bfcurS)$ for $1\!\leq \! m \! < \! n \! \leq \! 6$ and
$[\bfL(\bfx;\bfcurS)]_{mm} \! = \! \sigma_c\, \sqrt{ 2 h( G_{mm}^\nu(\bfx;\bfcurS);\alpha_m) }$ for $1\!\leq \! m\! = \! n\! \leq \! 6$.
We thus have, almost surely,
$\Vert\, [\bfC(\bfx;\bfcurS)]\,\Vert_F \,\, \leq \,\, \Vert [\bfL(\bfx;\bfcurS)] \Vert_F^2 =
\sum_m [\bfL(\bfx;\bfcurS)]_{mm}^2 + \sum_{m<n}[\bfL(\bfx;\bfcurS)]_{mn}^2
 = \sigma_c^2(\sum_m 2\, h( G_{mm}^\nu(\bfx;\bfcurS);$ $\alpha_m) +\sum_{m<n} G_{mn}^\nu(\bfx;\bfcurS)^2)$.
Using Lemma~\ref{lemma:2}-(i) yields
\begin{equation} \label{eq:eq46}
\Vert \, [\bfC(\bfx;\bfcurS)]\,\Vert_F \,\, \leq \,\, \sigma_c^2\,( 4\sum_m\alpha_m +2\sum_m G_{mm}^\nu(\bfx;\bfcurS)^2 + \sum_{m<n} G_{mn}^\nu(\bfx;\bfcurS)^2 )\, .
\end{equation}
For all $\bfcurS =\{\bfw,[y]\}\in\curC_\curS=\curC_w\times\curC_y$, $\bfz\in\curC_z$, and $\bfvarphi\in\curC_\varphi$, Eq.~\eqref{eq:eq35} allows for writing
$\vert g^\nu(\bfx;\bfcurS,\bfz,\bfvarphi)\vert \leq \sum_{\bfbeta\in\curB} \sqrt{ 2\widetilde\chi_\bfbeta^\Delta([y]) } \,$ $z_\bfbeta \leq
\sqrt{2 \sum_{\bfbeta\in\curB} \widetilde\chi_\bfbeta^\Delta([y])}\, \sqrt{\sum_{\bfbeta\in\curB} z_\bfbeta^2} \leq \sqrt{2 \sum_{\bfbeta\in\curB} z_\bfbeta^2}$
because, from Proposition~\ref{proposition:2}-(i), $\sum_{\bfbeta\in\curB}\widetilde\chi_\bfbeta^\Delta([y]) = 1$.
For all $\bfx\in\RR^3$, using Eq.~\eqref{eq:eq40} yields
$G_{mn}^\nu(\bfx;\bfcurS)^2 \leq 2 \sum_{\bfbeta\in\curB} (Z_\bfbeta^{mn})^2$ almost surely.
Using Eq.~\eqref{eq:eq46}, we obtain Eq.~\eqref{eq:eq43} in which $\Gamma_C = \sigma_c^2\,( 4\sum_m\alpha_m +4\sum_m \sum_\bfbeta (Z_\bfbeta^{mm})^2 + 2 \sum_{m<n} \sum_\bfbeta (Z_\bfbeta^{mn})^2 )$ that is independent of $\bfx$ and $\bfcurS$. The $21\times \nu$ random variables $\{Z_\bfbeta^{mn}\}_{mn,\bfbeta}$ are independent copies of random variable $Z = \sqrt{-log\Psi}$ whose probability measure is $P_Z(dz) = \11_{\RR^+}(z)\, 2z\, \exp(-z^2)\, dz$. We have $E\{Z\} = \sqrt{\pi}/2$ and $E\{Z^2\} =1$. Hence, $E\{\Gamma_C\} = \underline \gamma_{1,C}$ with $ \underline \gamma_{1,C}= \sigma_c^2(4\sum_m\alpha_m + 54\, \nu) < +\infty$. For $p=2$ or $4$,
$E\{\Gamma_C^p\} = 2 (2\sigma_c^2)^p\int_0^{+\infty}(2\sum_m\alpha_m + 27\nu z^2)^p\, z\, \exp(-z^2)\, dz = \underline\gamma_{p,C}^p < +\infty$ that yields Eq.~\eqref{eq:eq44} (note that $E\{\Gamma_C^4\} < +\infty$ implies $E\{\Gamma_C\} < +\infty$ and $E\{\Gamma_C^2\} < +\infty$).
Since $E\{ \Vert \, [\bfC(\bfx;\bfcurS)]\,\Vert_F^2\} \leq E\{\Gamma_C^2\} < +\infty$, $[\bfC(\cdot;\bfcurS)]$ is a second-order random field.
Finally, for all $\bfx\in\RR^3$, $\bfomega$ and $\bfomega'$ in $\RR^6$,
$\vert \langle [\bfC(\bfx;\bfcurS)]\,\bfomega\, , \bfomega' \rangle_2 \vert  \,\,
\leq \Vert [\bfC(\bfx;\bfcurS)] \Vert_2 \,\Vert\bfomega\Vert_2\, \Vert \bfomega' \Vert_2
\leq \Vert [\bfC(\bfx;\bfcurS)] \Vert_F \,\Vert\bfomega\Vert_2\, \Vert \bfomega' \Vert_2 $,
which yields Eq.~\eqref{eq:eq45} using  Eq.~\eqref{eq:eq43}.
\end{proof}

\begin{corollary}[Properties of the non-Gaussian $\MM^+_6$-valued random field $\hbox{[} \CC(\cdot;\bfcurS) \hbox{]}$] \label{corollary:1}
Let $\CC(\cdot;\bfcurS)$ be the non-Gaussian second-order random field defined by Eq.~\eqref{eq:eq2} in which $\bfC(\cdot;\bfcurS)$ satisfies the  properties given in Proposition~\ref{proposition:4}. For all $\bfomega\in\RR^6\backslash\{0\}$,
\begin{equation} \label{eq:eq47}
\frac{\langle [\CC(\bfx;\bfcurS)]\,\bfomega  , \bfomega \rangle_2}{\Vert\bfomega\Vert_2^2} \leq \Vert\, [\CC(\bfx;\bfcurS)]\,\Vert_F \,\,\, \leq \,\,\Gamma_\CC \,\,\, a.s. \quad ,\quad \forall\bfx\in\RR^3 \, ,
\end{equation}
with $\Gamma_\CC$ a second-order $\RR^+$-valued random variable, independent of $\bfx$ and $\bfcurS$, such that
\begin{equation} \label{eq:eq48}
E\{\Gamma_\CC^2\} = \underline\gamma_{2,\CC}^2 < +\infty \quad , \quad  E\{\Gamma_\CC^4\} = \underline\gamma_{4,\CC}^4 < +\infty\, .
\end{equation}
For all $\bfomega$ and $\bfomega'$ in $\RR^6$ and for all $\bfx$ in $\RR^3$,
\begin{align}
\vert \langle [\CC(\bfx;\bfcurS)]\,\bfomega , \bfomega'\rangle_2 \vert  \,\,\, & \leq \,\, \Gamma_\CC \, \Vert\bfomega\Vert_2\,
                                                                                            \Vert\bfomega'\Vert_2 \,\,\, a.s. \, , \label{eq:eq49} \\
\langle [\CC(\bfx;\bfcurS)]\,\bfomega , \bfomega \rangle_2  \,\,\, & \geq \,\, \underline c_\epsilon \, \Vert\bfomega\Vert_2^2 \,\,\, a.s. \label{eq:eq49bis}
\end{align}
in which $\underline c_\epsilon  = \underline c_0\,\epsilon/(1+\epsilon)$ is a finite positive constant independent of $\bfx$ and $\bfcurS$.
\end{corollary}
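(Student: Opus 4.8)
The plan is to push the factorized form \eqref{eq:eq2} through directly, transferring all the randomness onto the single bound $\Gamma_C$ supplied by Proposition~\ref{proposition:4} and absorbing the deterministic Cholesky factor $[\underline\LL]$ into constants. First, for fixed $\bfx$ and $\bfcurS$, I would introduce the substitution $\bfu = [\underline\LL]\bfomega$ and $\bfu' = [\underline\LL]\bfomega'$, so that \eqref{eq:eq2} gives $\langle[\CC(\bfx;\bfcurS)]\bfomega,\bfomega'\rangle_2 = \frac{1}{1+\epsilon}(\epsilon\langle\bfu,\bfu'\rangle_2 + \langle[\bfC(\bfx;\bfcurS)]\bfu,\bfu'\rangle_2)$. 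Bounding the first term by Cauchy--Schwarz, the second by \eqref{eq:eq45}, and using $\Vert\bfu\Vert_2 \leq \Vert[\underline\LL]\Vert_2\Vert\bfomega\Vert_2$ (similarly for $\bfu'$) yields $\vert\langle[\CC(\bfx;\bfcurS)]\bfomega,\bfomega'\rangle_2\vert \leq \frac{\epsilon+\Gamma_C}{1+\epsilon}\Vert[\underline\LL]\Vert_2^2\,\Vert\bfomega\Vert_2\Vert\bfomega'\Vert_2$ almost surely, which is the content of \eqref{eq:eq49} once $\Gamma_\CC$ is fixed.

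For the Frobenius estimate in \eqref{eq:eq47} I would apply the mixed submultiplicative inequalities $\Vert[a][b]\Vert_F \leq \Vert[a]\Vert_2\Vert[b]\Vert_F$ and $\Vert[a][b]\Vert_F \leq \Vert[a]\Vert_F\Vert[b]\Vert_2$ to \eqref{eq:eq2}, together with $\Vert[\underline\LL]^T\Vert_2 = \Vert[\underline\LL]\Vert_2$, obtaining $\Vert[\CC(\bfx;\bfcurS)]\Vert_F \leq \frac{1}{1+\epsilon}\Vert[\underline\LL]\Vert_2^2\,\Vert\epsilon[I_6]+[\bfC(\bfx;\bfcurS)]\Vert_F$, and then the triangle inequality with \eqref{eq:eq43} and $\Vert[I_6]\Vert_F = \sqrt 6$ to get $\Vert\epsilon[I_6]+[\bfC(\bfx;\bfcurS)]\Vert_F \leq \epsilon\sqrt6 + \Gamma_C$. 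I would therefore set $\Gamma_\CC = \frac{\Vert[\underline\LL]\Vert_2^2}{1+\epsilon}(\epsilon\sqrt6 + \Gamma_C)$, a single random variable that dominates both the Frobenius bound and (since $\sqrt6 > 1$) the bilinear bound above, so \eqref{eq:eq47} and \eqref{eq:eq49} hold with this common $\Gamma_\CC$. The leftmost inequality of \eqref{eq:eq47} is the standard fact that for the symmetric positive-definite matrix $[\CC(\bfx;\bfcurS)]$ the Rayleigh quotient is bounded by the spectral radius, itself bounded by the Frobenius norm.

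Since $\Gamma_\CC$ is an affine function of $\Gamma_C$, the moment bounds \eqref{eq:eq48} follow from \eqref{eq:eq44}: $E\{\Gamma_\CC^2\}$ and $E\{\Gamma_\CC^4\}$ expand into finite linear combinations of $E\{\Gamma_C^p\}$ for $p\leq 4$, all finite by \eqref{eq:eq44} (the intermediate $E\{\Gamma_C^3\}$ controlled by Lyapunov's inequality), so $\Gamma_\CC$ is a second-order $\RR^+$-valued random variable independent of $\bfx$ and $\bfcurS$. For the coercivity \eqref{eq:eq49bis} I would discard the $[\bfC]$ contribution using its almost-sure positive-definiteness: with $\bfu = [\underline\LL]\bfomega$, one has $\langle[\CC(\bfx;\bfcurS)]\bfomega,\bfomega\rangle_2 = \frac{1}{1+\epsilon}(\epsilon\Vert\bfu\Vert_2^2 + \langle[\bfC(\bfx;\bfcurS)]\bfu,\bfu\rangle_2) \geq \frac{\epsilon}{1+\epsilon}\Vert[\underline\LL]\bfomega\Vert_2^2$, and then $\Vert[\underline\LL]\bfomega\Vert_2^2 = \langle[\underline\CC]\bfomega,\bfomega\rangle_2 \geq \underline c_0\Vert\bfomega\Vert_2^2$ by Lemma~\ref{lemma:1} and \eqref{eq:eq1}, giving \eqref{eq:eq49bis} with $\underline c_\epsilon = \underline c_0\,\epsilon/(1+\epsilon)$.

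The computation is essentially routine once the factorization is unpacked; the only point requiring care is the bookkeeping needed to produce a single random variable $\Gamma_\CC$ valid simultaneously for the operator/bilinear estimate and the Frobenius estimate, rather than two distinct constants --- hence the mildly wasteful $\epsilon\sqrt6$ in place of $\epsilon$ in the definition of $\Gamma_\CC$.
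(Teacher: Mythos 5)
Your proof is correct and follows essentially the same route as the paper's: unpack the factorization \eqref{eq:eq2}, bound $\Vert\,[\CC(\bfx;\bfcurS)]\,\Vert_F$ by an affine function of $\Gamma_C$ via submultiplicativity and Eq.~\eqref{eq:eq43}, define $\Gamma_\CC$ as that affine bound (the paper uses the constant $\underline c_1 = \tr[\underline\CC]$ from Frobenius--Frobenius submultiplicativity where you use the slightly sharper $\Vert[\underline\LL]\Vert_2^2$), get the moment bounds \eqref{eq:eq48} from Eq.~\eqref{eq:eq44}, and prove \eqref{eq:eq49bis} by discarding the positive-semidefinite $[\bfC]$-contribution and invoking Eq.~\eqref{eq:eq1}. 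The only inessential deviation is your separate derivation of \eqref{eq:eq49} through Eq.~\eqref{eq:eq45}: the paper obtains it directly from Cauchy--Schwarz with the Frobenius bound, which your own $\Gamma_\CC$-domination remark reproduces anyway.
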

%
%
\begin{proof} (Corollary~\ref{corollary:1}).
Equation~\eqref{eq:eq2} yields
$\Vert\, [\CC(\bfx;\bfcurS)]\,\Vert_F\, \leq (1+\epsilon)^{-1} \Vert [\underline\LL]^T\Vert_F\,\Vert [\underline\LL]\Vert_F$
          $(\epsilon\,\Vert[I_6]\Vert_F + \Vert [\bfC(\bfx;\bfcurS)\Vert_F)$. We have
$\Vert [\underline\LL]^T\Vert_F = \Vert [\underline\LL]\Vert_F = (\tr[\underline\CC])^{1/2}$ and $\Vert[I_6]\Vert_F =\sqrt{6}$.
Eqs.~\eqref{eq:eq1} and  \eqref{eq:eq43} yield  $\Vert\, [\CC(\bfx;\bfcurS)]\,\Vert_F\,\, \leq \, \underline c_1 (1+\epsilon)^{-1}(\epsilon\sqrt{6} + \Vert\, [\bfC(\bfx;\bfcurS)]\,\Vert_F ) \leq \, \Gamma_\CC$ almost surely with $\Gamma_\CC =\underline c_1 (1+\epsilon)^{-1}(\epsilon\sqrt{6} + \Gamma_C)$, which is the second part of Eq.~\eqref{eq:eq47}. For all $\bfomega$ and $\bfomega'$ in $\RR^6$, we have
$\vert \langle [\CC(\bfx;\bfcurS)]\,\bfomega , \bfomega' \rangle_2 \vert  \,\,  \leq \,\Vert [\CC(\bfx;\bfcurS)]\Vert_F  \, \Vert\bfomega\Vert_2\,\Vert\bfomega'\Vert_2$.
Taking $\bfomega'=\bfomega\in\RR^6\backslash \{0\}$ yields the first part of  Eq.~\eqref{eq:eq47}. Then using the second part of Eq.~\eqref{eq:eq47} yields Eq.~\eqref{eq:eq49}.
From Eq.~\eqref{eq:eq2}, it can be deduced that $\langle[\CC(\bfx;\bfcurS)]\,\bfomega\, , \bfomega \rangle_2 =
(\epsilon \langle [\underline\CC]\,\bfomega , \bfomega \rangle_2 + \langle [\bfC(\bfx;\bfcurS)]\,[\underline\LL]\, \bfomega , [\underline\LL]\bfomega\rangle_2)/(1+\epsilon)$.
From Eq.~\eqref{eq:eq1} and since $[\bfC(\bfx;\bfcurS)]$ is a $\MM^+_6$-valued random variable, we obtain Eq.~\eqref{eq:eq49bis}.
\end{proof}
%
\begin{definition} [{\textit{Random field}} $\hbox{[} \widetilde\CC\hbox{]}$ {\textit{with uncertain spectral measure}}] \label{definition:8}
We assume that $\nu$ is fixed and satisfies Hypothesis~\ref{hypothesis:3}.
The random field $\{ [\widetilde\CC(\bfx)]\in\RR^3\}$ with uncertain spectral measure is defined by
\begin{equation} \label{eq:eq42}
[\widetilde\CC(\bfx)] = [\CC(\bfx;\bfS)] \quad , \quad \forall\bfx\in\RR^3 \, ,
\end{equation}
in which $\bfS = \{\bfW,[\bfY]\}$ is the $\RR^3\times \MM_{3,\widehat\nu_s}$- valued random variable defined by Definition~\ref{definition:5}.
\end{definition}
%
\begin{remark} \label{remark:2}
Random field $\{ [\widetilde\CC(\bfx)]\in\RR^3\}$ defined by Eq.~\eqref{eq:eq42} is the random field with uncertain spectral measure. As explained in
Remark~\ref{remark:1}, $E\{[\CC(\bfx;\bfcurS)]\} = [\underline\CC]$ for all $\bfx\in\RR^3$,  we have not $E\{[\widetilde\CC(\bfx)]\} = [\underline\CC]$, but we have the approximation $E\{[\widetilde\CC(\bfx)]\} \simeq [\underline\CC]$.
\end{remark}

\section{Stochastic elliptic boundary value problem for stochastic homogenization}
\label{sec:Section5}
We consider a heterogeneous complex elastic microstructure occupying domain $\Omega$, which by definition is a microstructure that cannot be described  in terms of its constituents  at the microscale. This is typically the case of live tissues. In such a case, the stochastic model of the apparent elasticity field can be constructed at the mesoscale that corresponds to the scale  of the spatial correlation length of the microstructure $\Omega$ as proposed in \cite{Soize2006,Soize2008,Soize2017b}. The stochastic homogenization from the mesoscale to the macroscale allows the effective elasticity tensor $\widetilde\CC^\peff$ to be constructed. The study of the statistical properties of $\widetilde\CC^\peff$  allows for analyzing the scale separation. The separation is obtained if the statistical fluctuations of $\widetilde\CC^\peff$ are sufficiently small and, in this case, $\Omega$ is a representative volume element (RVE) \cite{Kanit2003,Ostoja2006,Ostoja2007,Soize2008}. Such a separation occurs if the spatial correlation length at the mesoscale is sufficiently small with respect to the characteristic geometrical dimension of $\Omega$. If not, $\widetilde\CC^\peff$ exhibits significant statistical fluctuations and therefore, $\Omega$ is not a RVE.

The deterministic part of the formulation used in Sections~\ref{sec:Section5.1} to \ref{sec:Section5.3} to write the problem of homogenization on $\Omega$ is that proposed in \cite{Bornert2008} for homogeneous deformations on the boundary $\partial\Omega$.
We use the convention for summations over repeated Latin indices $j$, $p$, and $q$ taking values in $\{1, 2, 3\}$.
\subsection{Definition of the stochastic boundary value problem (BVP)}
\label{sec:Section5.1}
Let $\Omega$ be a bounded open subset of $\RR^3$ with a sufficiently regular boundary $\partial\Omega$.
Let $\bfcurS$ be fixed in $\curC_\curS$. For all $\ell$ and $r$ in $\{1,2,3\}$, we have to find the $\RR^3$-valued random field
$\{\widetilde\bfU^{\ell r}(\bfx) = (\widetilde U_1^{\ell r}(\bfx), \widetilde U_2^{\ell r}(\bfx), \widetilde U_3^{\ell r}(\bfx) ) ,\bfx\in \overline{\Omega}\}$,
defined on $(\Theta,\curT,\curP)$, indexed by $\overline\Omega$, such that almost surely,
\begin{align}
 -\frac{\partial}{\partial x_j} ( \CC_{ijpq}(\bfx;\bfcurS)\, \varepsilon_{pq}( \widetilde\bfU^{\ell r}(\bfx) ) = 0
                                                        & \quad , \quad  \forall\bfx\in \Omega\quad , \quad i=1,2,3\, ,\label{eq:eq50} \\
 \widetilde\bfU^{\ell r}(\bfx) = \widetilde\bfu_0^{\ell r}(\bfx) &\quad , \quad  \forall\bfx\in \partial\Omega \, ,   \label{eq:eq51}
\end{align}
in which $\varepsilon_{pq}(\bfu) = (\partial u_p /\partial x_q + \partial u_q /\partial x_p)/2$ for $\bfu=(u_1,u_2,u_3)$
and where for all $\bfx\in\partial\Omega$, $\widetilde\bfu_0^{\ell r}(\bfx) = (\widetilde u_{0,1}^{\ell r}(\bfx), \widetilde u_{0,2}^{\ell r}(\bfx), \widetilde u_{0,3}^{\ell r}(\bfx) )$ is defined by $\widetilde u_{0,j}^{\ell r}(\bfx)=( \delta_{j\ell} x_r + \delta_{jr} x_\ell)/2$ with $\delta_{j\ell}$ the Kronecker symbol. The fourth-order tensor-valued random field $\{\CC_{ijpq}(\cdot;\bfcurS)\}_{ijpq}$ is such that
$\CC_{ijpq} = \CC_{jipq} = \CC_{ijqp} = \CC_{pqij}$ for $i$, $j$, $p$, and $q$ in $\{1,2,3\}$ and is such that
$\CC_{ijpq}(\cdot;\bfcurS) =[\CC(\cdot,\bfcurS)]_{\textbf{i}\textbf{j}}$ in which $\textbf{i} =(i,j)$ with $1\leq i\leq j\leq 3$ and $\textbf{j} =(p,q)$ with $1\leq p\leq q\leq 3$ are indices with values in $\{1,\ldots , 6\}$, and where the $\MM^+_6$-valued random field $[\CC(\cdot;\bfcurS)]$ is the one constructed in Section~\ref{sec:Section4} and whose properties are given by Corollary~\ref{corollary:1}.
\subsection{Random effective tensor  from stochastic homogenization and its random eigenvalues}
\label{sec:Section5.2}
For $\bfcurS$ fixed in $\curC_\curS$, for $i$, $j$, $\ell$, and  $r$ in $\{1,2,3\}$ the component $\CC_{ij\ell r}^\peff (\bfcurS)$ of the random fourth-order effective tensor $\CC^\peff (\bfcurS)$  is defined by
\begin{equation} \label{eq:eq52}
\CC_{ij\ell r}^\peff (\bfcurS) = \frac{1}{\vert\Omega\vert} \int_\Omega \CC_{ijpq}(\bfx;\bfcurS)\, \varepsilon_{pq}( \widetilde\bfU^{\ell r}(\bfx) )\, d\bfx \, ,
\end{equation}
in which $\widetilde\bfU^{\ell r}$ is the $\RR^3$-valued random field that satisfies Eqs.~\eqref{eq:eq50} and \eqref{eq:eq51} and where $\vert\Omega\vert = \int_\Omega d\bfx$. The fourth-order effective tensor $\CC^\peff (\bfcurS)$ satisfies the symmetry and positive-definiteness properties \cite{Bornert2008}. We can thus define the effective $\MM^+_6$-valued random matrix $[\CC^\peff (\bfcurS)]$ associated with random tensor  $\CC^\peff (\bfcurS)$, which is such that
$[\CC^\peff (\bfcurS)]_{\textbf{i}\textbf{j}} = \CC_{ij\ell r}^\peff (\bfcurS)$ in which $\textbf{i}=(i,j)$ with $1\leq i\leq j \leq 3$ and  $\textbf{j}=(\ell,r)$ with $1\leq \ell\leq r \leq 3$.
\subsection{Transforming the nonhomogeneous Dirichlet BVP in a homogeneous Dirichlet BVP}
\label{sec:Section5.3}
For fixed $\ell$ and $r$, since $\bfx\mapsto\widetilde\bfu_0^{\ell r}(\bfx)$ is a linear function in $\bfx$, we can perform the following translation (without having to resort the trace theorem in Hilbert spaces),
\begin{equation} \label{eq:eq53}
\widetilde\bfU^{\ell r}(\bfx) = \bfU^{\ell r}(\bfx) + \widetilde\bfu_0^{\ell r}(\bfx) \quad , \quad \forall\bfx \in\overline\Omega \, .
\end{equation}
Since, $\forall\bfx \in\overline\Omega$,
$\varepsilon_{pq}( \widetilde\bfu_0^{\ell r}(\bfx) )  =(\delta_{p\ell}\delta_{qr} +\delta_{pr}\delta_{q\ell})/2$ and
$\CC_{ij\ell r}(\bfx;\bfcurS) = \CC_{ijr\ell}(\bfx;\bfcurS)$, for all $\ell$ and $r$ in $\{1,2,3\}$, the nonhomogeneous Dirichlet BVP defined by
Eqs.~\eqref{eq:eq50} and \eqref{eq:eq51} becomes the following homogeneous Dirichlet BVP for the
$\RR^3$-valued random field $\{\bfU^{\ell r}(\bfx) = (U_1^{\ell r}(\bfx), U_2^{\ell r}(\bfx), U_3^{\ell r}(\bfx) ) ,\bfx\in \overline{\Omega}\}$,
defined on $(\Theta,\curT,\curP)$, indexed by $\overline\Omega$, such that almost surely,
\begin{align}
 -\frac{\partial}{\partial x_j} ( \CC_{ijpq}(\bfx;\bfcurS)\, \varepsilon_{pq}( \bfU^{\ell r}(\bfx) ) & = f_i^{\ell r}(\bfx;\bfcurS)
                                                         \quad , \quad  \forall\bfx\in \Omega\quad , \quad i=1,2,3\, ,\label{eq:eq54} \\
 \bfU^{\ell r}(\bfx) & = \bfzero \quad , \quad  \forall\bfx\in \partial\Omega \, ,   \label{eq:eq55}
\end{align}
in which $f_i^{\ell r}(\bfx;\bfcurS) = \frac{\partial}{\partial x_j} (\CC_{ij\ell r}(\bfx;\bfcurS)$ that, from Proposition~\ref{proposition:1} and Eqs.~\eqref{eq:eq35},
\eqref{eq:eq36}, and \eqref{eq:eq40}, exists almost surely.
\subsection{Analysis of the stochastic homogeneous Dirichlet BVP}
\label{sec:Section5.4}
(i) {\textit{Definition of random vector $\bfXi$}}. Let $\bfXi = \{ \{ \bfZ^{mn},$ $1\leq m\leq n\leq 6 \} , \{ \bfPhi^{mn}, 1\leq m\leq n\leq 6 \} \}$ be the second-order random variable on $(\Theta,\curT,\curP)$ with values in $\RR^{n_\xi}$ with $n_\xi = 2\times 21 \times \nu$,  whose probability measure is
$P_\bfXi =(\otimes_{m,n} P_{\bfZ^{mn}})\otimes (\otimes_{m,n} P_{\bfPhi^{mn}})$ in which
$P_{\bfZ^{mn}} = P_\bfZ$ and $P_{\bfPhi^{mn}} = P_\bfPhi$ for all $1\leq m\leq n\leq 6$ (see Definitions~\ref{definition:6} and \ref{definition:7}).
Let $\curC_\xi \subset \RR^{n_\xi}$ be the support of $P_\bfXi$, which is known and can easily be written.
Consequently, we have
$E\{\Vert\bfXi\Vert_2^2\} = \int_{\RR^{n_\xi}} \Vert\bfxi\Vert_2^2 \, P_\bfXi(d\bfxi) = \int_{\curC_\xi} \Vert\bfxi\Vert_2^2 \, P_\bfXi(d\bfxi) < +\infty$.\\

\noindent (ii) {\textit{Definition of mappings $\bfxi\mapsto \cc(\cdot;\bfcurS,\bfxi)$ and $\bfxi\mapsto \bfu^{\ell r}(\cdot;\bfcurS,\bfxi)$}}.
 For $\bfcurS$ fixed in $\curC_\curS$, the fourth-order tensor-valued random field $\CC(\cdot;\bfcurS)$ is written as
$\CC_{ijpq}(\bfx;\bfcurS) = [\CC(\bfx;\bfcurS)]_{\textbf{i}\textbf{j}}$ in which $\textbf{i}=(i,j)$ with $i\leq j$ and $\textbf{j}=(p,q)$ with $p\leq q$.
Taking into account the construction presented in Sections~\ref{sec:Section2} to \ref{sec:Section4}, the random field $[\CC(\cdot;\bfcurS)]$ is defined by a $\MM^+_6$-valued measurable mapping $\bfxi\mapsto [\cc(\cdot;\bfcurS,\bfxi)]$ on $\curC_\xi$ such that
$[\CC(\cdot;\bfcurS)] = [\cc(\cdot;\bfcurS,\bfXi)]$. Therefore, the fourth-order random field $\CC(\cdot;\bfcurS)$ is defined by a  measurable mapping $\bfxi\mapsto \cc(\cdot;\bfcurS,\bfxi)$ on $\curC_\xi$ such that
\begin{equation} \label{eq:eq58}
\CC_{ijpq}(\cdot;\bfcurS) = \cc_{ijpq}(\cdot;\bfcurS,\bfXi) \quad , \quad i,j,p,q \in \{1,2,3\} \, .
\end{equation}
Similarly, random field $\bfU^{\ell r}$ involved in the stochastic BVP, defined by Eqs.~\eqref{eq:eq54} and \eqref{eq:eq55}, depends only on $\bfcurS$ and $\bfXi$, and is defined by a measurable mapping $\bfxi\mapsto \bfu^{\ell r}(\cdot;\bfcurS,\bfxi)$ on $\curC_\xi$ such that
$\bfU^{\ell r} = \bfu^{\ell r}(\cdot;\bfcurS,\bfXi)$ for $\ell$ and $r$ in $\{1,2,3\}$.\\

\noindent (iii) {\textit{Definition of Hilbert space $\HH$}}. Let $\HH = \{\bfv=(v_1,v_2,v_3); v_j\in H^1(\Omega)$ for $j=1,2,3;$ $\bfv = \bfzero$ on $\partial\Omega\}$ be the Hilbert space equipped with the inner product and the associated norm,
\begin{equation} \label{eq:eq60}
\langle\bfu ,\bfv\rangle_\HH = \int_\Omega \varepsilon_{pq}(\bfu(\bfx))\, \varepsilon_{pq}(\bfv(\bfx))\, d\bfx \quad , \quad \Vert\bfv\Vert_\HH = (\langle\bfv ,\bfv\rangle_\HH)^{1/2}\, .
\end{equation}
Note that $\Vert\bfv\Vert_\HH$ is a norm on $\HH$ due to the Korn inequality and because $\bfv=\bfzero$ on $\partial\Omega$ (see for instance \cite{Dautray2013}). Introducing the matrix $[\varepsilon]$ such that $[\varepsilon]_{pq} = \varepsilon_{pq}$, Eq.~\eqref{eq:eq60} can be rewritten as
\begin{equation} \label{eq:eq60bis}
\langle\bfu ,\bfv\rangle_\HH = \int_\Omega \langle [\varepsilon(\bfu(\bfx))] \, , [\varepsilon(\bfv(\bfx))]\rangle_F d\bfx \quad ,
\quad \Vert\bfv\Vert^2_\HH = \int_\Omega \Vert \,[\varepsilon(\bfv(\bfx))] \, \Vert_F^2\, d\bfx \, .
\end{equation}

\noindent (iv) {\textit{Definition of bilinear form $b(\cdot,\cdot;\bfcurS,\bfxi)$ and linear form $\curL^{\ell r}(\cdot;\bfcurS,\bfxi)$}}.
For $\bfcurS$ fixed in $\curC_\curS$, for all $\bfxi\in\curC_\xi$, for $\ell$ and $r$ in $\{1,2,3\}$, and using Eq.~\eqref{eq:eq58}, we define the bilinear form $(\bfu,\bfv)\mapsto b(\bfu,\bfv;\bfcurS,\bfxi): \HH\times\HH \rightarrow \RR$ such that
\begin{equation} \label{eq:eq61}
b(\bfu,\bfv;\bfcurS,\bfxi) = \int_\Omega \cc_{ijpq}(\bfx;\bfcurS,\bfxi)\, \varepsilon_{pq}(\bfu(\bfx))\, \varepsilon_{ij}(\bfv(\bfx))\, d\bfx \, ,
\end{equation}
and the linear form $\bfv\mapsto \curL^{\ell r}(\bfv;\bfcurS,\bfxi): \HH\rightarrow\RR$ such that
\begin{equation} \label{eq:eq62}
\curL^{\ell r}(\bfv;\bfcurS,\bfxi) = - \int_\Omega \cc_{\ell r i j}(\bfx;\bfcurS,\bfxi)\, \varepsilon_{ij}(\bfv(\bfx))\, d\bfx \, ,
\end{equation}
whose right-hand side member in Eq.~\eqref{eq:eq62} is the transformation of $\int_\Omega f_i^{\ell r}(\bfx;\bfcurS)\, v_i(\bfx)\, d\bfx$ for which we have used $\bfv=\bfzero$ on $\partial\Omega$ and the symmetry property $\cc_{ij\ell r} =\cc_{ji\ell r} =\cc_{\ell rij}$.\\

\noindent (v) {\textit{Type of stochastic solution sought}}.
From a computational point of view, a solution of the stochastic BVP defined by Eqs.~\eqref{eq:eq54} and \eqref{eq:eq55} will be constructed by using the Monte Carlo simulation method. Consequently, we only need to analyze the strong stochastic solution of the weak formulation of this stochastic BVP and the weak stochastic solution is not useful. We then limit Proposition~\ref{proposition:5} to the strong stochastic solution.
%
\begin{proposition}[Weak formulation of the stochastic homogeneous Dirichlet BVP and its strong stochastic solution] \label{proposition:5}
(i) For $\bfcurS$ fixed in $\curC_\curS$ and for $1\leq \ell\leq r \leq 3$, the weak formulation of the stochastic BVP defined by
Eqs.~\eqref{eq:eq54} and \eqref{eq:eq55} is: for $P_\bfXi$-almost all $\bfxi$ in $\curC_\xi\subset\RR^{n_\xi}$, find $\bfu^{\ell r}(\cdot;\bfcurS,\bfxi)$
in $\HH$ such that
\begin{equation} \label{eq:eq63}
b(\bfu^{\ell r}(\cdot ;\bfcurS,\bfxi),\bfv;\bfcurS,\bfxi) = \curL^{\ell r}(\bfv;\bfcurS,\bfxi) \quad , \quad \forall\bfv\in\HH\, .
\end{equation}

\noindent (ii) For $1\leq \ell\leq r \leq 3$, there exists a unique solution  $\bfu^{\ell r}(\cdot ;\bfcurS,\bfxi)\in\HH$ (strong stochastic solution) such that Eq.~\eqref{eq:eq63} holds and $\bfu^{r \ell}(\cdot ;\bfcurS,\bfxi) = \bfu^{\ell r}(\cdot ;\bfcurS,\bfxi)$.

\noindent (iii) The associated stochastic solution $\bfU^{\ell r}(\cdot;\bfcurS) = \bfu^{\ell r}(\cdot ;\bfcurS,\bfXi)$ is of second-order,
\begin{equation} \label{eq:eq64}
E\{\Vert\bfU^{\ell r}(\cdot ;\bfcurS)\Vert_\HH^2\} = \underline\gamma_u^2 < +\infty\, .
\end{equation}
\end{proposition}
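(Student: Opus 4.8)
The plan is to treat the problem realization by realization. For $\bfcurS$ fixed in $\curC_\curS$ and for $P_\bfXi$-almost every fixed $\bfxi\in\curC_\xi$, the field $\bfx\mapsto[\cc(\bfx;\bfcurS,\bfxi)]$ is a deterministic, bounded, uniformly elliptic elasticity field, so I would obtain the strong stochastic solution of the weak formulation \eqref{eq:eq63} by applying the Lax--Milgram theorem pathwise, and then recover measurability and the second-order property from the fact that the coercivity constant is deterministic together with the integrability bounds of Corollary~\ref{corollary:1}. Part (i) is immediate: multiplying \eqref{eq:eq54} by a test function $\bfv\in\HH$, integrating over $\Omega$, integrating by parts, and using $\bfv=\bfzero$ on $\partial\Omega$ with the symmetries $\cc_{ijpq}=\cc_{jipq}=\cc_{ijqp}=\cc_{pqij}$ yields \eqref{eq:eq63} with $b$ and $\curL^{\ell r}$ exactly as in \eqref{eq:eq61}--\eqref{eq:eq62}, the right-hand side $f_i^{\ell r}$ being transformed into \eqref{eq:eq62} as already indicated in the text.

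For part (ii), I would check the three hypotheses of Lax--Milgram for each fixed $\bfxi$. Using the Voigt representation and the symmetries, the form \eqref{eq:eq61} is rewritten, via \eqref{eq:eq60bis}, as $b(\bfu,\bfv;\bfcurS,\bfxi)=\int_\Omega\langle[\cc(\bfx;\bfcurS,\bfxi)]\,\bfvarepsilon(\bfu(\bfx)),\bfvarepsilon(\bfv(\bfx))\rangle_2\,d\bfx$, where $\bfvarepsilon(\bfw)\in\RR^6$ is the Voigt vector of the strain matrix $[\varepsilon(\bfw)]$, whose Euclidean norm is equivalent, up to fixed constants, to $\Vert[\varepsilon(\bfw)]\Vert_F$. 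Continuity of $b$ then follows from \eqref{eq:eq49} and the Cauchy--Schwarz inequality in $L^2(\Omega)$, giving $\vert b(\bfu,\bfv;\bfcurS,\bfxi)\vert\leq c\,\Gamma_\CC\,\Vert\bfu\Vert_\HH\,\Vert\bfv\Vert_\HH$ almost surely; coercivity follows from \eqref{eq:eq49bis}, giving $b(\bfv,\bfv;\bfcurS,\bfxi)\geq c'\,\underline c_\epsilon\,\Vert\bfv\Vert_\HH^2$ with $\underline c_\epsilon$ deterministic and independent of $\bfx$, $\bfcurS$, and $\bfxi$ (here $\Vert\cdot\Vert_\HH$ is a genuine norm on $\HH$ by the Korn inequality, as noted after \eqref{eq:eq60}); and continuity of $\curL^{\ell r}$ follows from \eqref{eq:eq49} applied with the fixed Voigt vector associated with $(\ell,r)$, giving $\vert\curL^{\ell r}(\bfv;\bfcurS,\bfxi)\vert\leq c\,\Gamma_\CC\,\sqrt{\vert\Omega\vert}\,\Vert\bfv\Vert_\HH$. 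Lax--Milgram then provides, for almost every $\bfxi$, a unique $\bfu^{\ell r}(\cdot;\bfcurS,\bfxi)\in\HH$ satisfying \eqref{eq:eq63}, together with the almost sure a priori estimate

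\begin{equation*}
\Vert\bfu^{\ell r}(\cdot;\bfcurS,\bfxi)\Vert_\HH \,\leq\, \frac{c\,\sqrt{\vert\Omega\vert}}{c'\,\underline c_\epsilon}\,\Gamma_\CC \, .
\end{equation*}

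The symmetry $\bfu^{r\ell}=\bfu^{\ell r}$ is then a consequence of uniqueness, since the minor symmetry $\cc_{\ell rij}=\cc_{r\ell ij}$ gives $\curL^{r\ell}=\curL^{\ell r}$ while $b$ does not depend on $(\ell,r)$, so both fields solve the same variational problem.

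Finally, for part (iii), I would first establish measurability of $\bfxi\mapsto\bfu^{\ell r}(\cdot;\bfcurS,\bfxi)$: since $\bfxi\mapsto[\cc(\bfx;\bfcurS,\bfxi)]$ is measurable by \eqref{eq:eq58} and the Lax--Milgram solution operator depends continuously on the coefficients and on the right-hand side, the composition is measurable; equivalently this can be obtained through a Galerkin approximation, each finite-dimensional solution being measurable as the solution of a linear system with measurable and almost surely invertible matrix, and passing to the limit. The second-order property then follows at once by squaring the a priori estimate and taking the expectation,

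\begin{equation*}
E\{\Vert\bfU^{\ell r}(\cdot;\bfcurS)\Vert_\HH^2\} \,\leq\, \Big(\frac{c\,\sqrt{\vert\Omega\vert}}{c'\,\underline c_\epsilon}\Big)^2\,E\{\Gamma_\CC^2\} \,=\, \underline\gamma_u^2 \, ,
\end{equation*}

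which is finite by \eqref{eq:eq48}. The decisive point—and the only genuinely delicate one—is that the coercivity constant $\underline c_\epsilon$ is deterministic and uniform in $\bfxi$, coming from the deterministic lower bound \eqref{eq:eq49bis}, so that the randomness enters the estimate only through the numerator $\Gamma_\CC$, whose second-order integrability $E\{\Gamma_\CC^2\}<+\infty$ from Corollary~\ref{corollary:1} immediately closes the argument. I expect the main obstacle to be the careful justification of the measurability of the solution map, the Lax--Milgram step and the a priori estimate being otherwise routine.
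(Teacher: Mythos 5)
Your proposal is correct and follows essentially the same route as the paper: pathwise application of the Lax--Milgram theorem for fixed $\bfxi$, with continuity of $b$ and $\curL^{\ell r}$ controlled by the random variable $\Gamma_\CC=\gamma_\CC(\bfXi)$ via Eqs.~\eqref{eq:eq49} and \eqref{eq:eq66}--\eqref{eq:eq67}, coercivity from the deterministic bound \eqref{eq:eq49bis}, and the second-order property \eqref{eq:eq64} obtained by squaring the a priori estimate and invoking $E\{\Gamma_\CC^2\}<+\infty$ from Eq.~\eqref{eq:eq48}. Your additional justifications of the measurability of $\bfxi\mapsto\bfu^{\ell r}(\cdot;\bfcurS,\bfxi)$ and of the symmetry $\bfu^{r\ell}=\bfu^{\ell r}$ via uniqueness are sound details that the paper leaves implicit, but they do not change the structure of the argument.
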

%
\begin{proof} (Proposition~\ref{proposition:5}).

(i) Using Eqs.~\eqref{eq:eq58} and \eqref{eq:eq60} to \eqref{eq:eq62}, it is easy to prove that Eq.~\eqref{eq:eq63} is the weak formulation of
Eqs.~\eqref{eq:eq54} and \eqref{eq:eq55}.

(ii) Let $[\cc^{\ell r}(\bfx;\bfcurS,\bfxi)]$ be the $(3\times 3)$ real matrix such that
$[\cc^{\ell r}(\bfx;\bfcurS,\bfxi)]_{ij} = \cc_{\ell rij}(\bfx ;\bfcurS,\bfxi)$.
Therefore Eq.~\eqref{eq:eq62} can be rewritten as
$\curL^{\ell r}(\bfv;\bfcurS,\bfxi) = - \int_\Omega \langle [\cc^{\ell r}(\bfx;\bfcurS,\bfxi)]\, , [\varepsilon(\bfv(\bfx))]\rangle_F\, d\bfx$ and using Eq.~\eqref{eq:eq60bis}
yields
$$\vert\curL^{\ell r}(\bfv;\bfcurS,\bfxi) \vert \leq (\int_\Omega \,\Vert \, [\cc^{\ell r}(\bfx;\bfcurS,\bfxi)]\, \Vert_F^2 \, d\bfx)^{1/2}\, \Vert\bfv\Vert_\HH\, .$$
For all $\ell$ and $r$ in $\{1,2,3\}$ and since $\cc_{\ell rij} = \cc_{\ell rji}$, we have
$$\Vert \, [\cc^{\ell r}(\bfx;\bfcurS,\bfxi)]\, \Vert_F^2 = \sum_{i,j}\cc_{\ell rij}(\bfx;\bfcurS,\bfxi)^2 \leq 2\,\sum_{\ell'\leq r',i\leq j}
\cc_{\ell' r' ij}(\bfx;\bfcurS,\bfxi)^2 = 2\, \Vert \, [\cc(\bfx;\bfcurS,\bfxi)]\, \Vert_F^2$$
 in which we have used the notation
$[\cc(\bfx;\bfcurS,\bfxi)]_{\textbf{j}'\textbf{i}} = \cc_{\ell'r'ij}(\bfx;\bfcurS,\bfxi)$ in which $\textbf{j}'=(\ell',r')$
with $\ell'\leq r'$ and $\textbf{i}=(i,j)$ with $i\leq j$. Taking into account Proposition~\ref{proposition:4} and its proof,
Eqs.~\eqref{eq:eq47} and \eqref{eq:eq48} of Corollary~\ref{corollary:1} with its proof, show that $\Gamma_\CC=\gamma_\CC(\bfXi)$ in which $\bfxi\mapsto\gamma_\CC(\bfxi)$ is a positive-valued measurable mapping on $\curC_\xi$, which is independent of $\bfcurS$ and such that
\begin{equation} \label{eq:eq65}
E\{\Gamma_\CC^2\} = \int_{\curC_\xi} \gamma_\CC(\bfxi)^2\, P_\bfXi(d\bfxi) = \underline\gamma_\CC^2 < +\infty \, .
\end{equation}
Equation~\eqref{eq:eq47} shows that
\begin{equation} \label{eq:eq66}
\Vert \, [\cc(\bfx;\bfcurS,\bfxi)]\, \Vert_F \,\,\leq \,\gamma_\CC(\bfxi)\,\, , \,\,\hbox{for} \,\, P_\bfXi\hbox{ - almost all} \,\,\bfxi\,\,\hbox{in} \,\,\curC_\xi\, .
\end{equation}
It can then be deduced that
\begin{equation} \label{eq:eq67}
\vert\curL^{\ell r}(\bfv;\bfcurS,\bfxi) \vert \,\,\leq \, \sqrt{2} \,\vert\Omega\vert^{1/2} \, \gamma_\CC(\bfxi)\, \Vert\bfv\Vert_\HH \, ,
\end{equation}
which shows that linear form $\bfv \mapsto \curL^{\ell r}(\bfv;\bfcurS,\bfxi)$ is continuous on $\HH$ for $P_\bfXi$-almost $\bfxi$ in $\curC_\xi$.
Equation~\eqref{eq:eq49} shows that, $\forall\bfomega$ and $\bfomega'$ in $\RR^6$,
$\vert \langle [\cc(\bfx;\bfcurS,\bfxi)]\,\bfomega , \bfomega'\rangle_2 \vert \,\leq  \gamma_\CC(\bfxi) \, \Vert\bfomega\Vert_2\, \Vert\bfomega'\Vert_2$
for $P_\bfXi$-almost $\bfxi$ in $\curC_\xi$. Using Eq.~\eqref{eq:eq61} and taking into account the symmetry properties of $\varepsilon_{pq}$ and $\cc_{ijpq}$ yield, for all $\bfu$ and $\bfv$ in $\HH$,
$\vert b(\bfu,\bfv;\bfcurS,\bfxi)\vert \,\,\leq \, 2\, \gamma_\CC(\bfxi)\, \Vert\bfu\Vert_\HH \, \Vert\bfv\Vert_\HH$,
which shows that bilinear form $(\bfu,\bfv)\mapsto b(\bfu,\bfv;\bfcurS,\bfxi)$ is continuous on $\HH\times \HH$ for $P_\bfXi$-almost $\bfxi$ in $\curC_\xi$. Equation~\eqref{eq:eq49bis} shows that,
$\forall\bfomega\in\RR^6$,
$\langle[\cc(\bfx;\bfcurS,\bfxi)]\,\bfomega , \bfomega\rangle_2  \,\,\,\geq \,\, \underline c_\epsilon \, \Vert\bfomega\Vert_2^2$
for $P_\bfXi$-almost $\bfxi$ in $\curC_\xi$. From Eq.~\eqref{eq:eq61}, it can be deduced that, $\forall\bfv\in\HH$,
\begin{equation} \label{eq:eq69}
b(\bfv,\bfv;\bfcurS,\bfxi) \,\,\geq \, \underline c_\epsilon\, \Vert\bfv\Vert_\HH^2  \, ,
\end{equation}
which proves that bilinear form $(\bfu,\bfv)\mapsto b(\bfu,\bfv;\bfcurS,\bfxi)$ is coercive for $P_\bfXi$-almost $\bfxi$ in $\curC_\xi$.
Due to the  continuity and coercivity  of bilinear form $b(\cdot,\cdot;\bfcurS,\bfxi)$ and due to the continuity of linear form
$\curL^{\ell r}(\cdot;\bfcurS,\bfxi)$ for $P_\bfXi$-almost $\bfxi$ in $\curC_\xi$, the use of the Lax-Milgram theorem \cite{Lax1954,Lions2012} allows
for proving (ii) of the Proposition.

(iii) Taking  $\bfv = \bfu^{\ell r}(\cdot; \bfcurS,\bfxi)$ in Eq.~\eqref{eq:eq63} yields
$b(\bfu^{\ell r}(\cdot ;\bfcurS,\bfxi),\bfu^{\ell r}(\cdot ;\bfcurS,\bfxi);\bfcurS,\bfxi)$ $ = \vert\curL^{\ell r}(\bfu^{\ell r}(\cdot ;\bfcurS,\bfxi);\bfcurS,\bfxi)\vert$.
From Eq.~\eqref{eq:eq67}, it can be deduced that
$\vert\curL^{\ell r}(\bfu^{\ell r}(\cdot ;\bfcurS,\bfxi);\bfcurS,\bfxi)\vert
\,\,\leq$ $\sqrt{2} \,\vert\Omega\vert^{1/2} \, \gamma_\CC(\bfxi)\, \Vert\bfu^{\ell r}(\cdot ;\bfcurS,\bfxi)\Vert_\HH$ and using  Eq.~\eqref{eq:eq69} yield
$\underline c_\epsilon\, \Vert\bfu^{\ell r}(\cdot ;\bfcurS,\bfxi)\Vert_\HH^2 \leq b(\bfu^{\ell r}(\cdot ;\bfcurS,\bfxi),$ $\bfu^{\ell r}(\cdot ;\bfcurS,\bfxi);\bfcurS,\bfxi)$. Consequently, we obtain
\begin{equation} \label{eq:eq70}
\Vert\bfu^{\ell r}(\cdot ;\bfcurS,\bfxi)\Vert_\HH \,\,\,\leq  \frac{\sqrt{2} \,\vert\Omega\vert^{1/2}}{ \underline c_\epsilon} \, \gamma_\CC(\bfxi)  \, .
\end{equation}
Finally,
$$E\{\Vert\bfU^{\ell r}(\cdot ;\bfcurS)\Vert_\HH^2\} = \int_{\curC_\xi} \Vert\bfu^{\ell r}(\cdot ;\bfcurS,\bfxi)\Vert_\HH^2\, P_\bfXi(d\bfxi)
\leq {2 \,\vert\Omega\vert} \,\underline c_\epsilon^{-2} \int_{\curC_\xi} \gamma_\CC(\bfxi)^2\,  P_\bfXi(d\bfxi)$$
 and
using Eq.~\eqref{eq:eq65} yield
$E\{\Vert\bfU^{\ell r}(\cdot ;\bfcurS)\Vert_\HH^2\}\! =\! 2 \vert\Omega\vert  \underline\gamma_\CC^2 / \underline c_\epsilon^2$ that is Eq.~\eqref{eq:eq64}
with $\underline\gamma_u^2 = 2 \,\vert\Omega\vert \underline\gamma_\CC^2 /\underline c_\epsilon^2$.
\end{proof}
\subsection{Random eigenvalues of the random effective elasticity matrix}
\label{sec:Section5.5}
For $\bfcurS=\{\bfw,[y]\}\in\curC_\curS=\curC_w\times\curC_y$, the random effective elasticity matrix $[\CC^\peff(\bfcurS)]$ defined in Section~\ref{sec:Section5.2} can be written as $[\CC^\peff(\bfcurS)] = [\cc^\peff(\bfcurS,\bfXi)]$ in which $\bfxi\mapsto [\cc^\peff(\bfcurS,\bfxi)]$ is a $\MM^+_6$-valued measurable mapping on $\curC_\xi\subset\RR^{n_\xi}$. For all $\bfcurS\in\curC_\curS$ and $\bfxi\in\curC_\xi$, let $\lambda_1(\bfcurS,\bfxi) \geq \ldots \geq \lambda_6(\bfcurS,\bfxi) > 0$ be the eigenvalues of matrix $[\cc^\peff(\bfcurS,\bfxi)]\in \MM_6^+$ and let $\bflambda(\bfcurS,\bfxi) = (\lambda_1(\bfcurS,\bfxi), \ldots , \lambda_6(\bfcurS,\bfxi))\in (\RR^{+*})^6$.
Let $\bfS =\{\bfW,[\bfY]\}$ be the $\RR^3\times \MM_{3,\widehat\nu_s}$-valued random variable defined in Definition~\ref{definition:5} and Proposition~\ref{proposition:2}-(ii), for which the support of its probability measure $P_\bfS = P_\bfW(d\bfw)\otimes P_{[\bfY]}(dy)$ is subset
$\curC_\bfcurS =\curC_w\times\curC_y$ of $\RR^3\times \MM_{3,\widehat\nu_s}$.
The random effective elasticity matrix $[\widetilde\CC^\peff]$, which corresponds to the elasticity random field $\widetilde\CC$ for which its spectral measure is uncertain, can then be written as $[\widetilde\CC^\peff] = [\CC^\peff(\bfS)]=[\cc^\peff(\bfS,\bfXi)]$.
Let $\{\widetilde\Lambda_j = \lambda_j(\bfS,\bfXi) ,j=1,\ldots,6\}$ be the ordered (a.s) random eigenvalues of $[\widetilde\CC^\peff]$. Let $\widetilde\bfLambda=(\widetilde\Lambda_1,\ldots,\widetilde\Lambda_6)$ be  the $\RR^6$-valued random variable whose support of its probability measure  $P_{\widetilde\bfLambda}(d\widetilde\bflambda)$ is $(\RR^{+*})^6$.
The operator norm of $[\widetilde\CC^\peff]$ is $\Vert\,[\widetilde\CC^\peff] \,\Vert_2  = \widetilde\Lambda_1$.
%
%
\begin{corollary}[Second-order properties of the random eigenvalues] \label{corollary:2}
Under proposition~\ref{proposition:5}, $\widetilde\bfLambda$ is a second-order $\RR^6$-valued random variable,
\begin{equation}\label{eq:eq71}
E\{ \Vert\widetilde\bfLambda\Vert_2^2\} = \gamma_{\lambda}^2 < +\infty \, .
\end{equation}
\end{corollary}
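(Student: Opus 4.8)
The plan is to reduce Eq.~\eqref{eq:eq71} to a second-moment bound on the largest random eigenvalue and then to dominate that eigenvalue by the random variable $\Gamma_\CC$ of Corollary~\ref{corollary:1}, whose second moment is finite. First I would note that $[\widetilde\CC^\peff]$ is a.s. in $\MM_6^+$ with ordered eigenvalues $\widetilde\Lambda_1\geq\ldots\geq\widetilde\Lambda_6>0$, so that $\Vert\widetilde\bfLambda\Vert_2^2=\sum_{j=1}^6\widetilde\Lambda_j^2\leq 6\,\widetilde\Lambda_1^2$ with $\widetilde\Lambda_1=\Vert[\widetilde\CC^\peff]\Vert_2$. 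Hence it suffices to prove $E\{\widetilde\Lambda_1^2\}<+\infty$ and to set $\gamma_\lambda^2$ equal to a majorant of $6\,E\{\widetilde\Lambda_1^2\}$.

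The main step, which I expect to be the real obstacle, is an estimate $\lambda_1(\bfcurS,\bfxi)\leq c\,\gamma_\CC(\bfxi)$ holding for every $\bfcurS\in\curC_\curS$ and for $P_\bfXi$-almost all $\bfxi$, with $c$ finite and depending only on $\Omega$ and the Voigt normalization, and with $\gamma_\CC$ the mapping of Eq.~\eqref{eq:eq65}. To obtain it I would first establish the energy representation of the effective matrix. Writing $\widetilde\bfU^{\ell r}=\bfU^{\ell r}+\widetilde\bfu_0^{\ell r}$ and denoting by $b(\cdot,\cdot;\bfcurS,\bfxi)$ the bilinear form of Eq.~\eqref{eq:eq61} (extended by the same integral to $H^1$ fields, and symmetric since $\cc_{ijpq}=\cc_{pqij}$), the identity $\curL^{\ell r}(\bfv;\bfcurS,\bfxi)=-b(\widetilde\bfu_0^{\ell r},\bfv;\bfcurS,\bfxi)$ turns Eq.~\eqref{eq:eq63} into the Galerkin orthogonality $b(\widetilde\bfU^{\ell r},\bfv;\bfcurS,\bfxi)=0$ for all $\bfv\in\HH$; in particular $b(\widetilde\bfU^{\ell r},\bfU^{ij};\bfcurS,\bfxi)=0$. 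Rewriting the definition~\eqref{eq:eq52} by minor symmetry as $\CC_{ij\ell r}^\peff(\bfcurS)=\vert\Omega\vert^{-1}\,b(\widetilde\bfu_0^{ij},\widetilde\bfU^{\ell r};\bfcurS,\bfxi)$ and adding the vanishing term $b(\bfU^{ij},\widetilde\bfU^{\ell r};\bfcurS,\bfxi)=0$ then gives the symmetric form $\CC_{ij\ell r}^\peff(\bfcurS)=\vert\Omega\vert^{-1}\,b(\widetilde\bfU^{ij},\widetilde\bfU^{\ell r};\bfcurS,\bfxi)$. For a unit $\bfomega\in\RR^6$, forming the corresponding linear combination $\widetilde\bfU^\bfomega$ of the fields $\widetilde\bfU^{ij}$ (with affine part $\widetilde\bfu_0^\bfomega$) yields $\langle[\CC^\peff(\bfcurS)]\,\bfomega,\bfomega\rangle_2=\vert\Omega\vert^{-1}\,b(\widetilde\bfU^\bfomega,\widetilde\bfU^\bfomega;\bfcurS,\bfxi)$. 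Using the orthogonality once more to replace the second argument, $b(\widetilde\bfU^\bfomega,\widetilde\bfU^\bfomega)=b(\widetilde\bfU^\bfomega,\widetilde\bfu_0^\bfomega)$, and since Eq.~\eqref{eq:eq49bis} makes $b(\cdot,\cdot;\bfcurS,\bfxi)$ an inner product a.s., the Cauchy--Schwarz inequality gives $b(\widetilde\bfU^\bfomega,\widetilde\bfU^\bfomega)\leq b(\widetilde\bfu_0^\bfomega,\widetilde\bfu_0^\bfomega)=\int_\Omega\langle[\CC(\bfx;\bfcurS)]\,\bfomega',\bfomega'\rangle_2\,d\bfx$, where $\bfomega'$ is the constant strain vector associated with $\widetilde\bfu_0^\bfomega$. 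By Eq.~\eqref{eq:eq49} this integral is bounded by $c\,\vert\Omega\vert\,\Gamma_\CC\,\Vert\bfomega\Vert_2^2$, and maximizing over unit $\bfomega$ gives $\widetilde\Lambda_1=\Vert[\CC^\peff(\bfcurS)]\Vert_2\leq c\,\Gamma_\CC=c\,\gamma_\CC(\bfXi)$ a.s. The essential point is that this minimum-energy/Cauchy--Schwarz route keeps the bound \emph{linear} in $\Gamma_\CC$; a cruder estimate through the a priori bound~\eqref{eq:eq70} on $\Vert\bfu^{\ell r}\Vert_\HH$ would be cubic in $\Gamma_\CC$ and would require a finite sixth moment, which Corollary~\ref{corollary:1} does not supply.

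Finally, because the majorant $c\,\gamma_\CC(\bfxi)$ does not depend on $\bfcurS$, substituting $\bfcurS=\bfS$ and using that $\bfXi$ and $\bfS$ are independent (Definitions~\ref{definition:6} and~\ref{definition:5}) gives $E\{\widetilde\Lambda_1^2\}=E\{\lambda_1(\bfS,\bfXi)^2\}\leq c^2\,E\{\gamma_\CC(\bfXi)^2\}=c^2\,\underline\gamma_{2,\CC}^2<+\infty$, by Eqs.~\eqref{eq:eq48} and~\eqref{eq:eq65}. Combined with the reduction of the first paragraph, this yields $E\{\Vert\widetilde\bfLambda\Vert_2^2\}\leq 6\,c^2\,\underline\gamma_{2,\CC}^2=:\gamma_\lambda^2<+\infty$, which is Eq.~\eqref{eq:eq71}.
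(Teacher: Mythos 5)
Your proof is correct, but it takes a genuinely different route from the paper's. The paper argues componentwise and non-variationally: starting from \eqref{eq:eq52}, \eqref{eq:eq53} and \eqref{eq:eq62} it writes $\cc^\peff_{ij\ell r}(\bfcurS,\bfxi)=-\vert\Omega\vert^{-1}\curL^{ij}(\bfu^{\ell r}(\cdot;\bfcurS,\bfxi);\bfcurS,\bfxi)+\vert\Omega\vert^{-1}\int_\Omega\cc_{ij\ell r}(\bfx;\bfcurS,\bfxi)\,d\bfx$, bounds the linear-form term by chaining the continuity estimate \eqref{eq:eq67} with the a priori bound \eqref{eq:eq70} (which makes that term quadratic in $\gamma_\CC(\bfxi)$), bounds the averaged-coefficient term by \eqref{eq:eq66}, and sums squares to obtain $\Vert\bflambda(\bfcurS,\bfxi)\Vert_2^2\leq 168\,\underline c_\epsilon^{-2}\,\gamma_\CC(\bfxi)^4+2\,\gamma_\CC(\bfxi)^2$; the integration over $P_\bfS\otimes P_\bfXi$ then consumes the \emph{fourth} moment $E\{\Gamma_\CC^4\}=\underline\gamma_{4,\CC}^4<+\infty$ of Eq.~\eqref{eq:eq48}. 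You instead exploit the variational structure of homogenization: Galerkin orthogonality turns \eqref{eq:eq52} into the energy representation $\langle[\CC^\peff(\bfcurS)]\,\bfomega,\bfomega\rangle_2=\vert\Omega\vert^{-1}\,b(\widetilde\bfU^\bfomega,\widetilde\bfU^\bfomega;\bfcurS,\bfxi)$, and Cauchy--Schwarz for the positive semi-definite symmetric form $b$ (on $H^1$ fields it is only semi-definite, because of rigid motions, but that suffices) gives the minimum-energy (Voigt) bound $b(\widetilde\bfU^\bfomega,\widetilde\bfU^\bfomega)\leq b(\widetilde\bfu_0^\bfomega,\widetilde\bfu_0^\bfomega)\leq c\,\vert\Omega\vert\,\Gamma_\CC\,\Vert\bfomega\Vert_2^2$, i.e. $\widetilde\Lambda_1\leq c\,\Gamma_\CC$ a.s. This keeps the estimate \emph{linear} in $\Gamma_\CC$, so only the second moment in \eqref{eq:eq48} is needed, and it yields the pointwise domination of the operator norm as a by-product. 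What the paper's cruder route buys in exchange is economy of means: it reuses only the estimates \eqref{eq:eq66}, \eqref{eq:eq67}, \eqref{eq:eq70} already displayed in the proof of Proposition~\ref{proposition:5}, requires no extension of $b$ to $H^1(\Omega)^3$ and no orthogonality argument, and the fourth moment it needs was built into Corollary~\ref{corollary:1} for exactly this purpose.

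One correction to your closing side remark: the ``cruder estimate'' through \eqref{eq:eq67} and \eqref{eq:eq70} is quadratic, not cubic, in $\Gamma_\CC$ at the level of $\lambda_1$ (hence quartic for $\Vert\bflambda\Vert_2^2$), and the moment it requires is the fourth, not the sixth; Corollary~\ref{corollary:1} does supply it, via Eq.~\eqref{eq:eq48}. So that route is not blocked --- it is precisely the paper's proof. The genuine advantage of your argument is that it is sharper and needs weaker moment information, not that the alternative fails.
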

%
%
\begin{proof} (Corollary~\ref{corollary:2}).
From Eqs.~\eqref{eq:eq52}, \eqref{eq:eq53}, and \eqref{eq:eq62}, it can be deduced that
$$\cc_{ij\ell r}^\peff(\bfcurS,\bfxi) = -\vert\Omega\vert^{-1}\curL^{ij}(\bfu^{\ell r}(\cdot;\bfcurS,\bfxi);\bfcurS,\bfxi)
      +\vert\Omega\vert^{-1} \int_\Omega \cc_{ij\ell r}(\bfx;\bfcurS,\bfxi)\, d\bfx\, .$$
Since $(a+b)^2 \leq 2(a^2+b^2)$, we have
$$\Vert\bflambda(\bfcurS,\bfxi)\Vert_2^2 \,= \Vert\,[\cc^\peff(\bfcurS,\bfxi)]\,\Vert_F^2 = \sum_{i\leq j, \ell\leq r} \cc_{ij\ell r}^\peff(\bfcurS,\bfxi)^2 \leq 2 \vert\Omega\vert^{-2}\sum_{i\leq j, \ell\leq r} \left\{ \vert \curL^{ij}(\bfu^{\ell r}(\cdot;\bfcurS,\bfxi);\bfcurS,\bfxi)\vert^2 + ( \int_\Omega \cc_{ij\ell r}(\bfx;\bfcurS,\bfxi)\, d\bfx)^2\right\}\, .$$
Using Eq.~\eqref{eq:eq66} allows us to write
$\sum_{i\leq j, \ell\leq r}( \int_\Omega \cc_{ij\ell r}(\bfx;\bfcurS,\bfxi)\, d\bfx)^2 \leq \vert\Omega\vert\int_\Omega\Vert \,[\cc(\bfx;\bfcurS,\bfxi))]\,\Vert_F^2 \,d\bfx \leq \vert\Omega\vert^2 \, \gamma_\CC(\bfxi)^2$.
Using Eq.~\eqref{eq:eq67}  and \eqref{eq:eq70} yields
$\sum_{i\leq j, \ell\leq r} \vert \curL^{ij}(\bfu^{\ell r}(\cdot;\bfcurS,\bfxi);\bfcurS,\bfxi)\vert^2 \leq
84 \, \underline c_\epsilon^{-2}\,\vert\Omega\vert^2\,\gamma_\CC(\bfxi)^4$. It can then be deduced the inequality
$\Vert\bflambda(\bfcurS,\bfxi)\Vert_2^2 \,\,\leq 168\, \underline c_\epsilon^{-2}\, \gamma_\CC(\bfxi)^4 + 2\, \gamma_\CC(\bfxi)^2$ and consequently, we have
$E\{ \Vert\widetilde\bfLambda\Vert_2^2\} =\int_{\curC_\curS}  \int_{\curC_\xi} \Vert \bflambda(\bfcurS,\bfxi)\Vert_2^2\, P_\bfS(d\bfs)\otimes P_\bfXi(d\bfxi) \,\, \leq
168\, \underline c_\epsilon^{-2}\, E\{\Gamma_\CC^4\} + 2\,E\{\Gamma_\CC^2\}$. Using Eq.~\eqref{eq:eq48} yields  Eq.~\eqref{eq:eq71} with
$\gamma_\lambda^2 = 168\, \underline c_\epsilon^{-2}\, \underline\gamma_{4,\CC}^4 + 2\,\underline\gamma_{2,\CC}^2$.
\end{proof}
\subsection{Brief comments about numerical aspects of stochastic solver}
\label{sec:Section5.6}
The Monte Carlo simulation method \cite{Robert2005,Rubinstein2008} is used as stochastic solver. Let $\{(\bfcurS^\kappa,\bfxi^\kappa)\in \curC_\curS\times\curC_\xi,\, \kappa=1,\ldots ,\kappa_\psim\}$ be $\kappa_\psim$ independent realizations of random variables $(\bfS,\bfXi)$ using the generator of probability measures $P_\bfS$ (see Section~\ref{sec:Section5.5}) and $P_\bfXi$ (see Section~\ref{sec:Section5.4}-(i)). The spatial discretization of the weak formulation defined by Eq.~\eqref{eq:eq63} of the stochastic BVP and the discretization of Eq.~\eqref{eq:eq52} with Eq.~\eqref{eq:eq53} can be performed by the finite element method.
Using Section~\ref{sec:Section5.5}, $\lambda_1(\bfcurS^\kappa,\bfxi^\kappa) \geq \ldots \geq \lambda_6(\bfcurS^\kappa,\bfxi^\kappa) > 0$ are computed as the eigenvalues of $[\cc^\peff(\bfcurS^\kappa,\bfxi^\kappa)]\in\MM^+_6$. Taking into account Eq.~\eqref{eq:eq71}, the mean-square convergence of the random eigenvalues can be analyzed with the convergence function
\begin{equation}\label{eq:eq72}
\kappa_\psim\mapsto\pconv(\kappa_\psim) = \Vert\, [\underline\CC]\, \Vert_F^{-1} \,( \kappa_\psim^{-1} \sum_{\kappa=1}^{\kappa_\ppsim}\Vert \bflambda(\bfcurS^\kappa,\bfxi^\kappa)\Vert^2_2)^{1/2} \, .
\end{equation}
For a given tolerance of convergence, the probability density function (pdf) $\widetilde\bflambda\mapsto p_{\widetilde\bfLambda}(\widetilde\bflambda)$ on $\RR^6$ (with support $(\RR^{+*})^6$) with respect to the Lebesgue measure $d\widetilde\bflambda$ can be estimated  with the $\kappa_\psim$ independent realizations $\{\bflambda(\bfcurS^\kappa,\bfxi^\kappa), \kappa=1,\ldots ,\kappa_\psim\}$
using, for instance, the multidimensional Gaussian kernel-density estimation method \cite{Bowman1997}. The pdf $\widetilde\lambda_1\mapsto p_{\widetilde\Lambda_1}(\widetilde\lambda_1)$ of $\widetilde\Lambda_1$  can also be estimated yielding the pdf of the operator norm $\Vert\,[\widetilde\CC^\peff] \,\Vert_2  = \widetilde\Lambda_1$.
\section{Numerical illustration}
\label{sec:Section6}
\noindent {\textit{(i) Mean model of the microstructure}}. Domain $\Omega = (]0,1[)^3$ and the mean model of the elastic material is chosen in the orthotropic class with mean Young moduli $\underline E_1=10^{10}$, $\underline E_2=0.5\times 10^{10}$, and $\underline E_3=0.1\times 10^{10}$, with mean Poisson coefficients $\underline \nu_{23}=0.25$, $\underline \nu_{31}=0.15$, and $\underline \nu_{12}=0.1$ (the International System of Units is used).\\

\noindent {\textit{(ii) Elasticity random field}}.
The hyperparameter $\delta_c$ that allows for controlling the level of statistical fluctuations of $[\bfC(\bfx;\bfcurS)]$ (see Hypothesis~\ref{hypothesis:1}) is fixed to the value $0.4$.\\

\noindent {\textit{(iii) Uncertain spectral measure}}. The model of the spectral  measure is the one described in Example~\ref{example:1}. The probability measure $P_\bfW(d\bfw)$ of $\bfW$ (see Definition~\ref{definition:5}) is chosen as uniform on $\curC_w$. For all $j\in\{1,2,3\}$, the mean value of the random correlation length $L_{cj} = \pi/K_j$ is $\underline L_c$  and its coefficient of variation $\delta_{L_{cj}}$ is $\delta_{L_c}$, which are independent of $j$. Consequently, the support $[w_j^\pmin,w_j^\pmax]$ of the probability measure of $W_j = L_{cj}$ is such that $w_j^\pmin = \underline L_c\,(1-\sqrt{3} \,\delta_{L_{c}} ) $  and $w_j^\pmax = 2\,\underline L_c - w_j^\pmin$.
The hyperparameter $\delta_s$ that controls the level of uncertainties of the spectral measure, which is such that $\delta_s^2= (\Pi_{j=1}^3(1+\delta_j^2)) - 1$, is generated with $\delta_1=\delta_2=\delta_3=\delta_\unc$. A sensitivity analysis with respect to the level of spectrum uncertainties will be performed by considering $9$ values of the triplet of parameters $(\underline L_c, \delta_{L_c}, \delta_\unc)$ with $\underline L_c \in\{0.2,0.4,0.6\}$ and
$\delta_{L_c}= \delta_\unc\in\{0.2,0.3,0.4\}$. For this set of data, the minimum of correlation lengths is $0.06$ (obtained for $\underline L_c =0.2$ with
$\delta_{L_c}= \delta_\unc =0.4$) while the maximum is $1.01$ (obtained for $\underline L_c =0.6$ with $\delta_{L_c}= \delta_\unc =0.4$).
The spectral domain sampling for the discretization of the spectral measure is performed with $\nu_s=8$ and thus $\nu=8^3=512$. With the quadrant symmetry, we have $\widehat\nu_s = 4$ yielding $\widehat\nu=64$.\\

\noindent {\textit{(iv) Finite element discretization}}.
The weak formulation defined by Eq.~\eqref{eq:eq63} is discretized by the finite element method.
The finite element mesh is made up of $20\times 20\times 20 = 8\,000$ solid finite elements (8-nodes solid), $9\, 261$ nodes, and $27\, 783$ degrees of freedom (dof). There are $2\, 402$ nodes on the boundary and thus $7\, 206$ zeros Dirichlet conditions. There are $2^3$ integrations points in each finite element, which yields $64\, 000$ integrations points. The spatial discretization of the $\MM^+_6$-valued elasticity random field $[\bfC(\cdot,\bfcurS)]$ yields
$21\times 64\, 000= 1\, 344\, 000$ random terms (taking into account the symmetry).\\

\noindent {\textit{(v) Stochastic solver}}. The Monte Carlo simulation method is performed for $\kappa_\psim \in [1, 2\,000]$.
Figure~\ref{fig:figure1}-(left) displays the convergence function $\kappa_\psim\mapsto\pconv(\kappa_\psim)$ defined by Eq.~\eqref{eq:eq72}  for $\underline L_c =0.2$ and with no uncertainty in the spectral measure ($\delta_{L_c}= \delta_\unc = 0$) and  for $\underline L_c \in\{0.2,0.4,0.6\}$ with the largest uncertainties in the spectral measure, $\delta_{L_c}= \delta_\unc = 0.4$, which is the most unfavorable value with respect to convergence.  It can be seen that the mean-square convergence is obtained for $\kappa_\psim = 2\,000$.\\

\begin{figure}[tb]
\includegraphics[width=6.5cm]{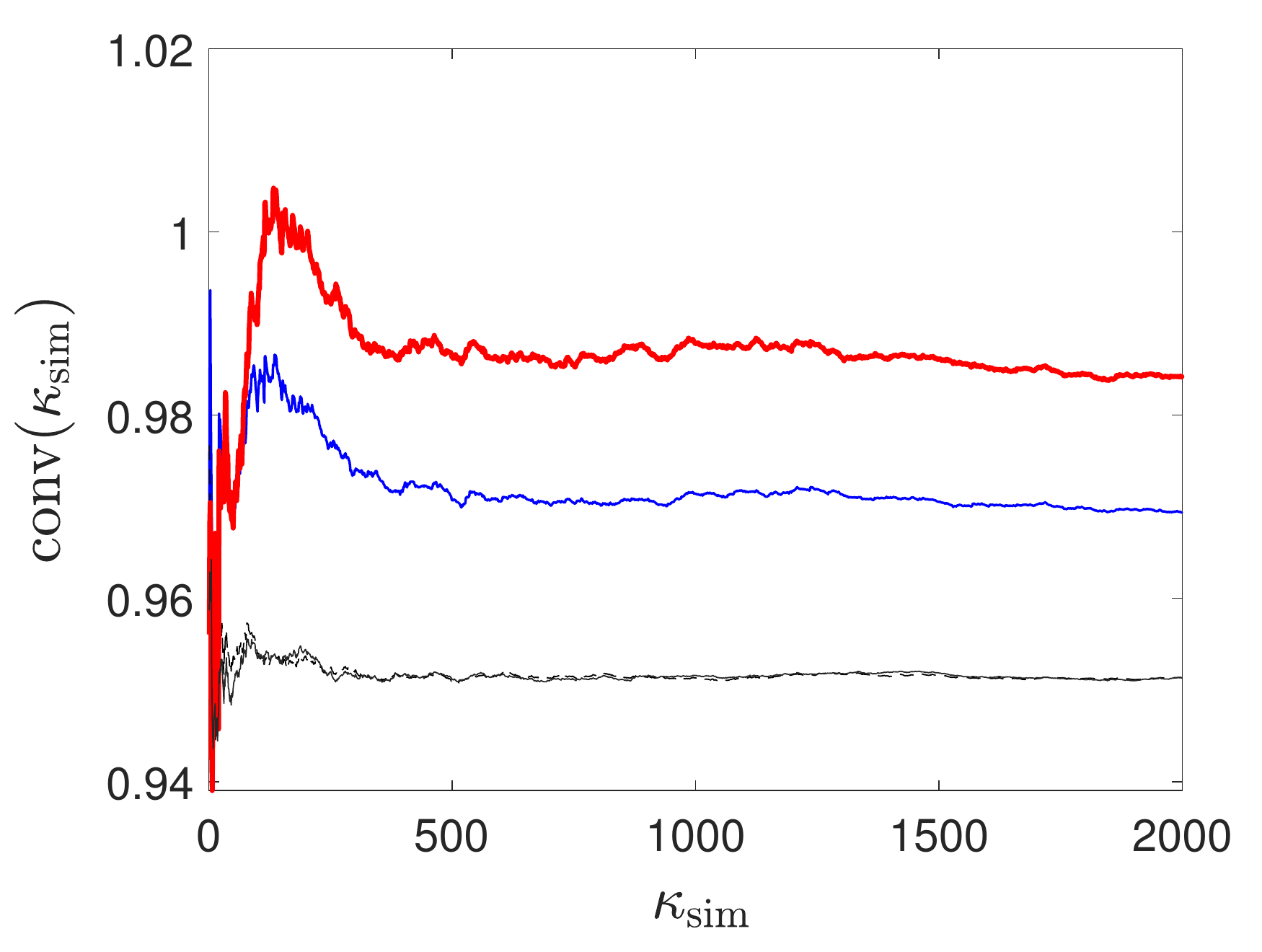} \includegraphics[width=6.7cm]{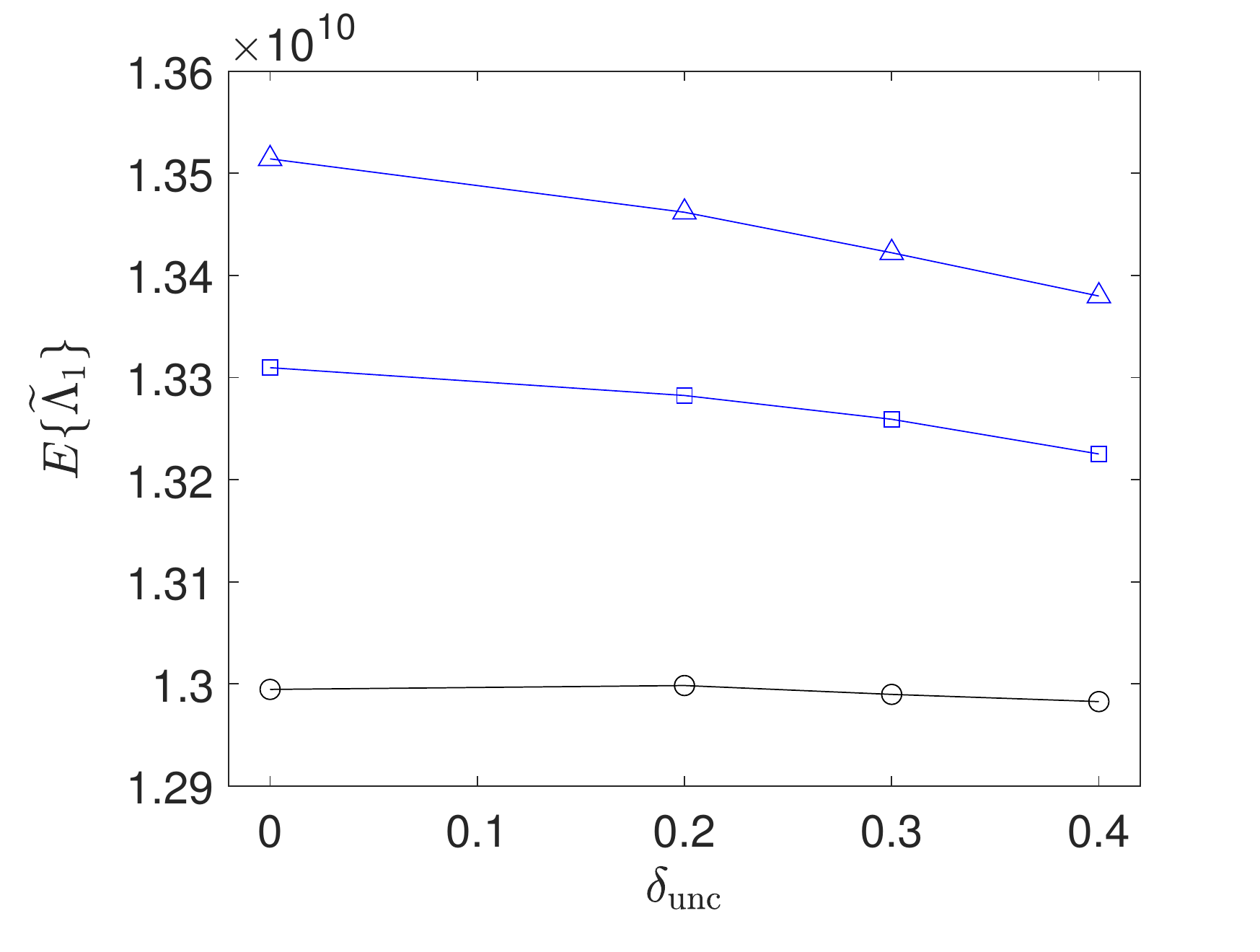}
\caption{Left figure: function $\kappa_\psim\mapsto\pconv(\kappa_\psim)$ for
$\delta_\unc = 0$ and $\underline L_c =0.2$ (dashed line);
$\delta_\unc = 0.4$ and $\underline L_c =0.2$ (black thin line), $0.4$ (blue med line), and $0.6$ (red thick line).
Right figure: $E\{\widetilde\Lambda_1\}$ as a function of $\delta_\unc$ for $\underline L_c = 0.2$ (circle marker),
$0.4$ (square marker), and $0.6$ (triangle-up marker).}
\label{fig:figure1}
\end{figure}
\begin{figure}[tb]
\includegraphics[width=6.5cm]{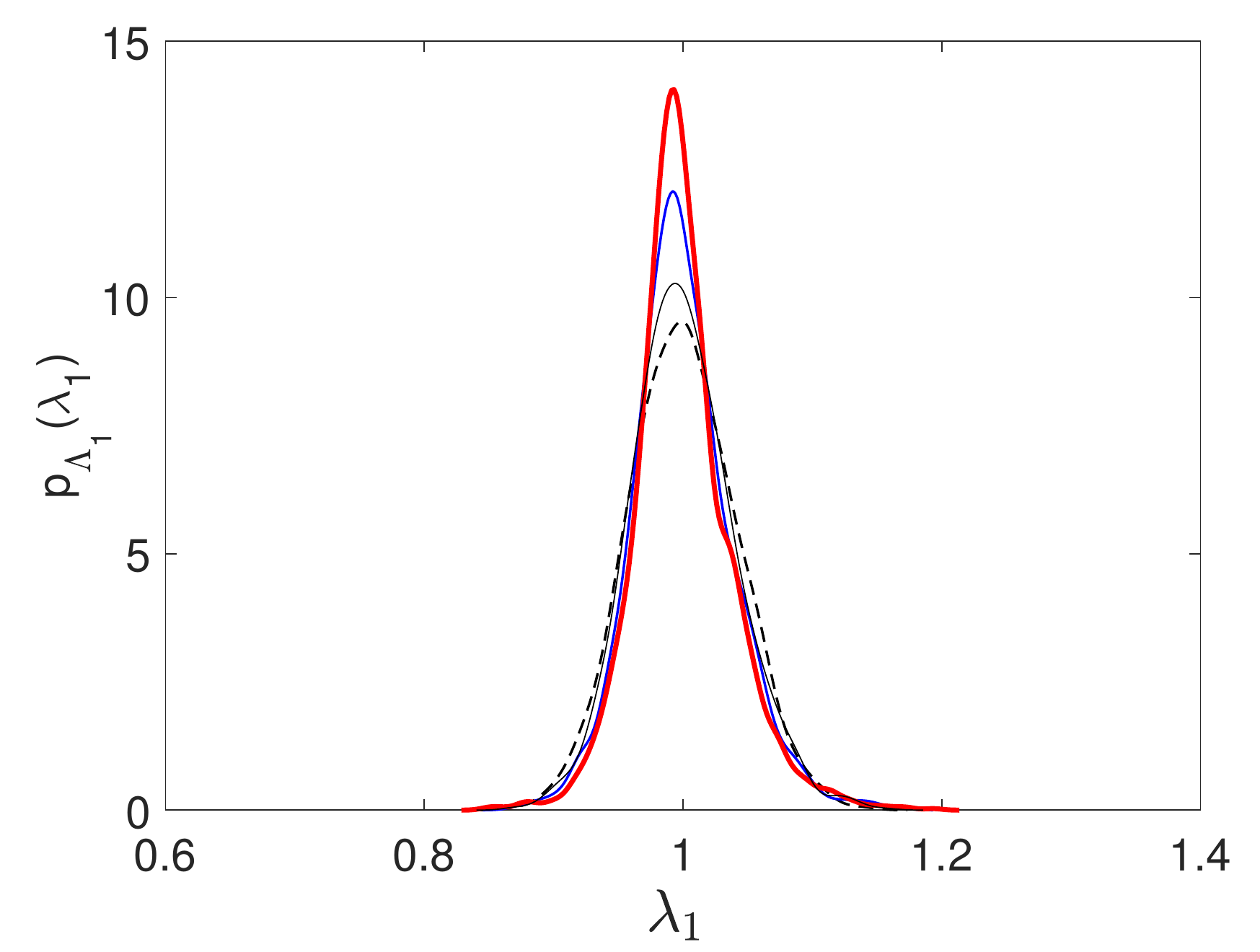}  \includegraphics[width=6.5cm]{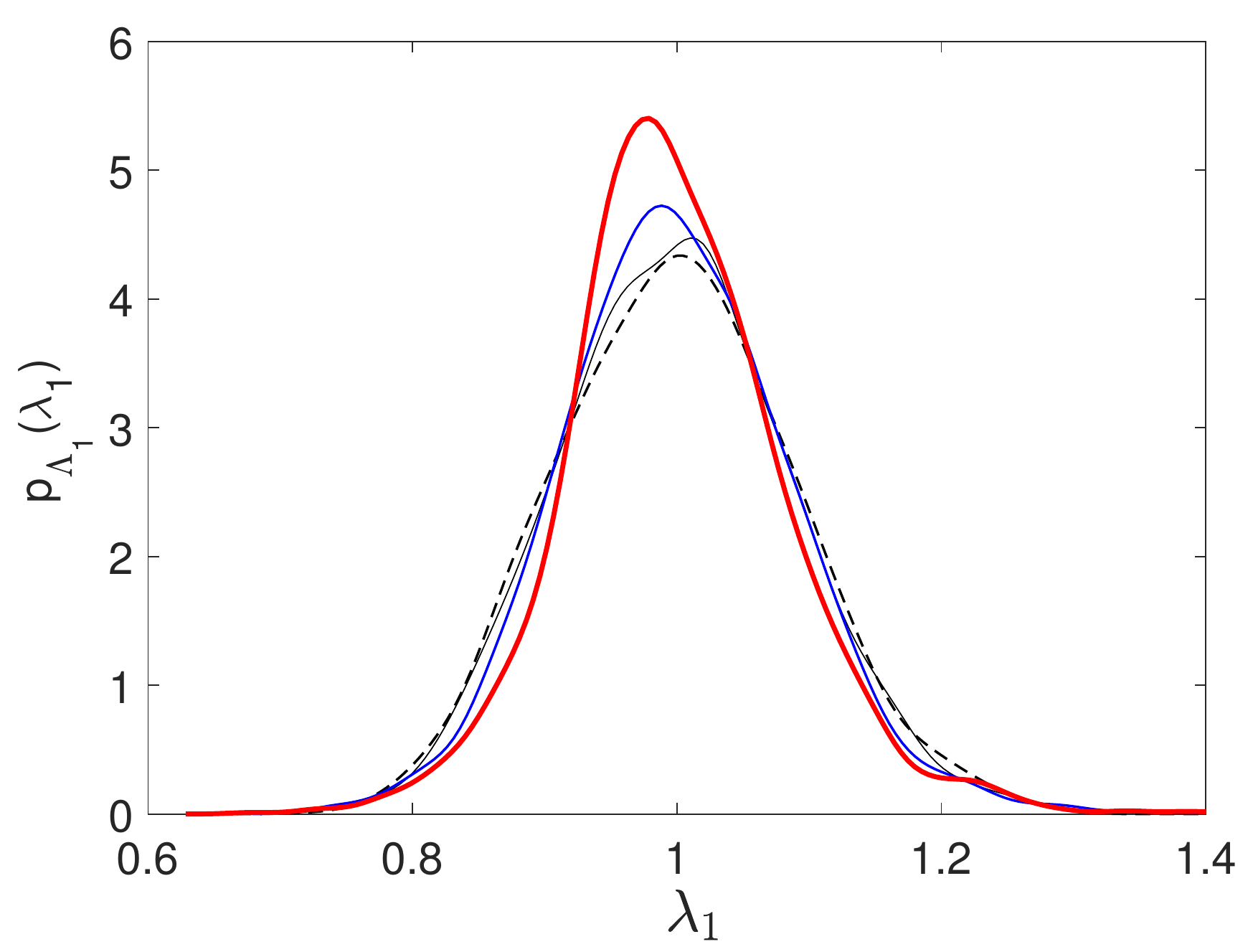} \includegraphics[width=6.5cm]{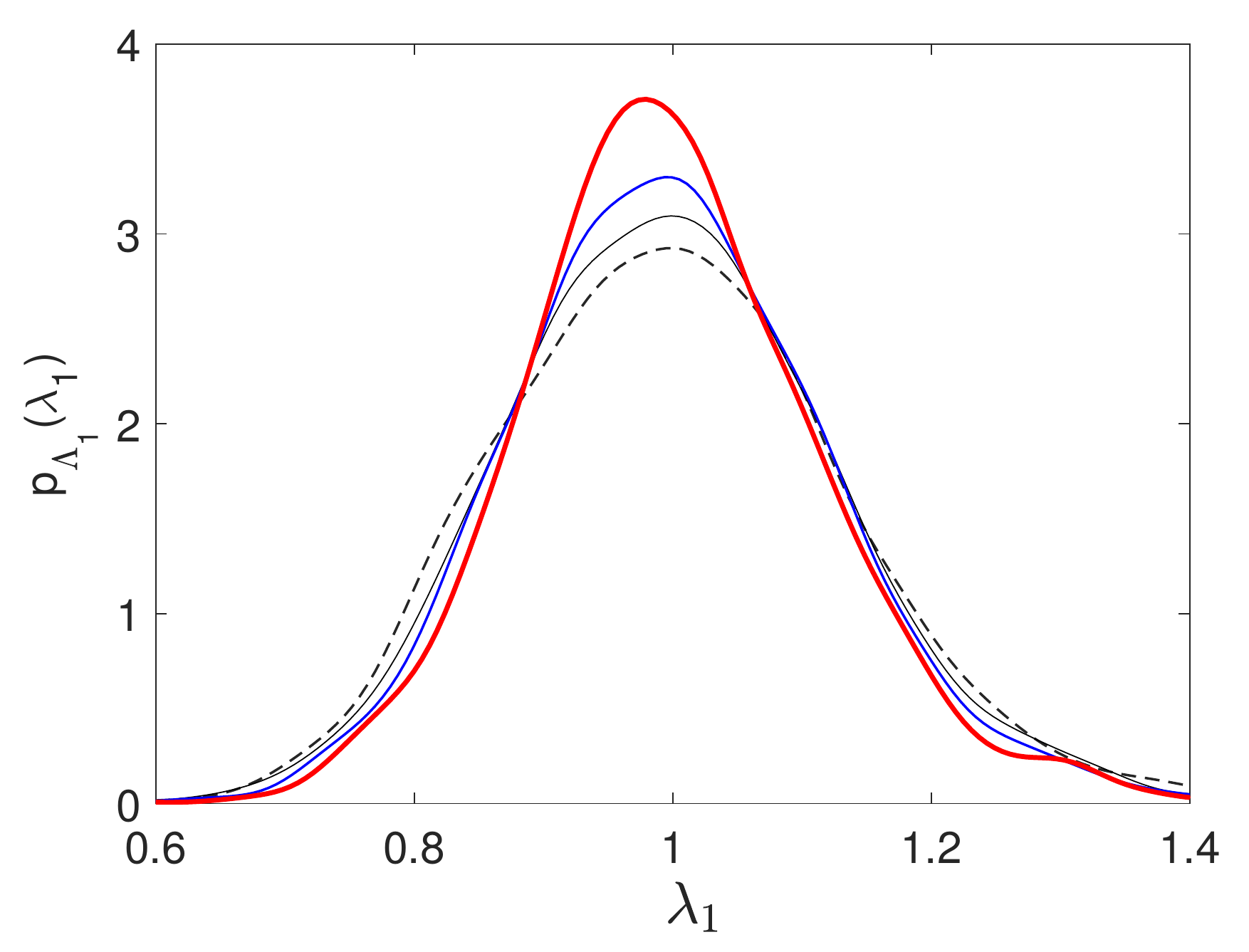}
\caption{Sensitivity of the pdf $\lambda_1 \mapsto p_{\Lambda_1}(\lambda_1)$
for $\underline L_c =0.2$ (top left figure), $0.4$ (top right figure), and $0.6$ (down figure) as a function of the spectral measure  uncertainty level:
$\delta_\unc = 0$ (dashed line), $0.2$ (black thin line), $0.3$ (blue med line), $0.4$ (red thick line).}
\label{fig:figure2}
\end{figure}

\noindent {\textit{(vi) Sensitivity of the probability density function of the normalized operator norm of the random effective elasticity matrix as a function of the uncertainty level of the spectral measure}}.
Let $\Lambda_1 = \widetilde\Lambda_1/E\{\widetilde\Lambda_1\}$ be the normalized operator norm $\Vert\, [\widetilde\CC^\peff]\,\Vert / E\{[\widetilde\CC^\peff]\,\Vert\} =\Lambda_1$. For $\underline L_c =0.2$, $0.3$, $0.4$, Fig.~\ref{fig:figure1}-(right) displays the graph of $E\{\widetilde\Lambda_1\}$ as a function of the level $\delta_{L_c}= \delta_\unc$ of the spectral measure uncertainties.
Figure~\ref{fig:figure2} shows the pdf $\lambda_1\mapsto p_{\Lambda_1}(\lambda_1)$ of the normalized operator norm of the random effective elasticity matrix
for $\underline L_c =0.2$, $0.4$, and $0.6$, with no uncertainty  in the spectral measure ($\delta_{L_c}= \delta_\unc = 0$) and with uncertainties $\delta_{L_c}= \delta_\unc = 0.2$, $0.3$, and $0.4$.\\

\noindent {\textit{(vii) Sensitivity of the probabilistic analysis of the RVE size with respect to the uncertainty level of the spectral measure}}.
We analyze the random largest eigenvalue $\Lambda_1$  (normalized operator norm).
Let $\eta$ be a positive real number and let $\eta\mapsto P(\eta)$ be the function from $]0,1]$ into $[0,1]$ defined by
\begin{equation}\label{eq:eq73}
P(\eta) =\hbox{Proba}\{1-\eta < \Lambda_1 \leq 1 +\eta\} = F_{\Lambda_1}(1+\eta) - F_{\Lambda_1}(1-\eta)\, ,
\end{equation}
in which $F_{\Lambda_1}$ is the cumulative distribution function of $\Lambda_1$.
Figure~\ref{fig:figure3}  shows the sensitivity of the graph of function $\eta\mapsto P(\eta)$ for
$\underline L_c = 0.2$, $0.4$, and $0.6$ with respect to the level of uncertainties in the spectral measure for
$\delta_{L_c}= \delta_\unc = 0$, $0.2$, $0.3$, and $0.4$.
Table~\ref{table1} yields an extraction from Fig.~\ref{fig:figure3} of the probability levels.\\

\begin{figure}[tb]
\includegraphics[width=6.5cm]{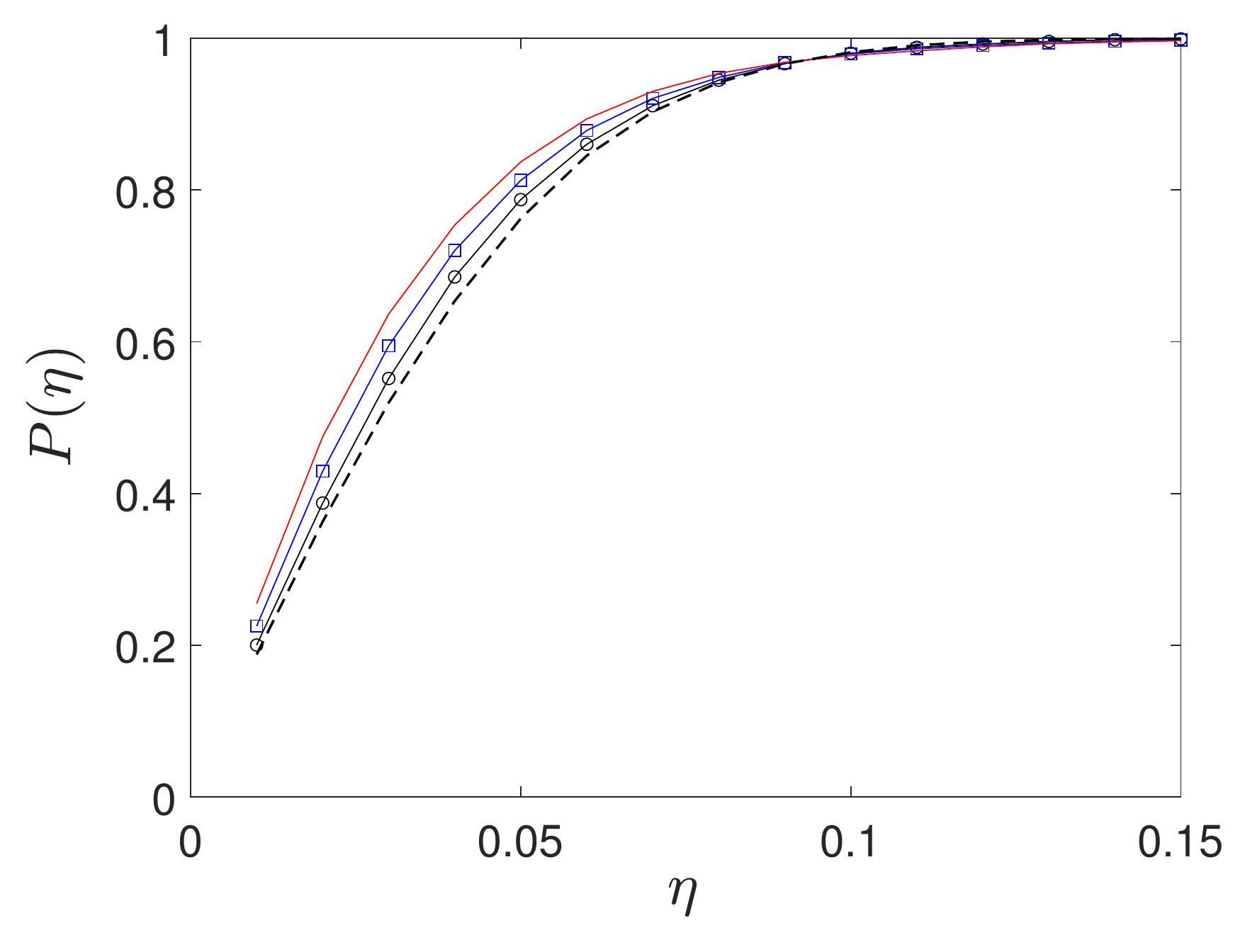}  \includegraphics[width=6.5cm]{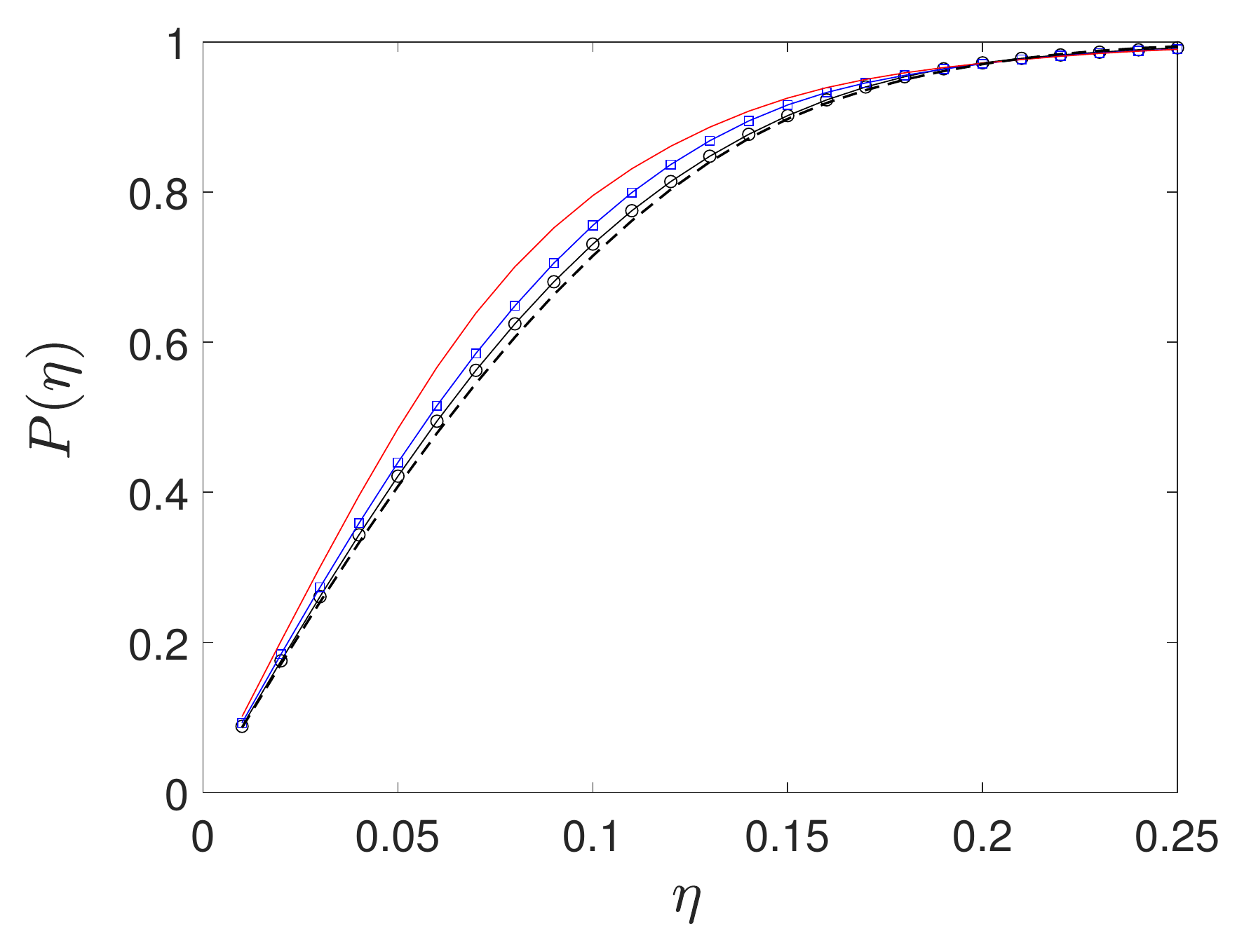} \includegraphics[width=6.5cm]{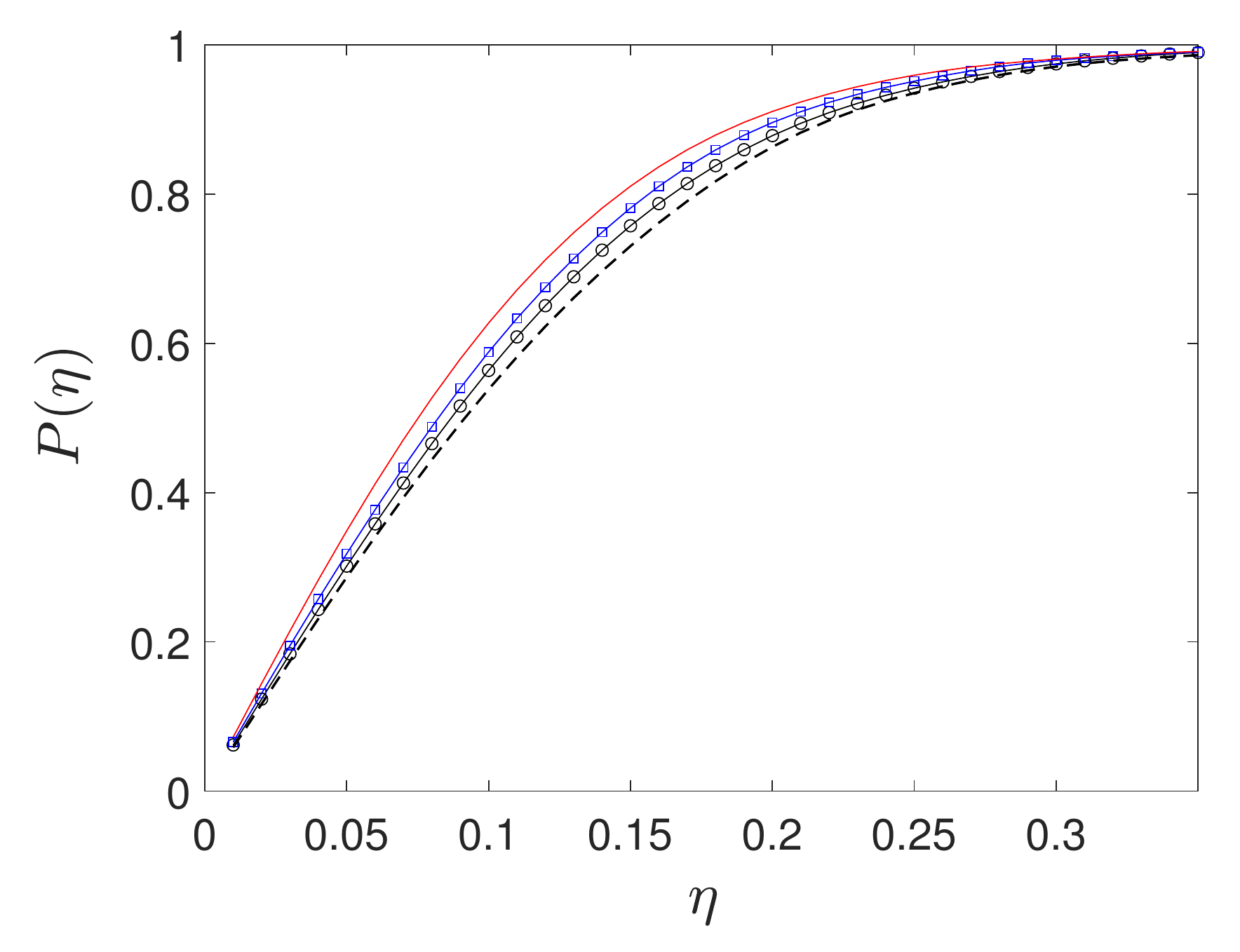}
\caption{Sensitivity of the graph of function $\eta\mapsto P(\eta)$ for
$\underline L_c = 0.2$ (top left figure), $0.4$ (top right figure), and $0.6$ (down figure)
with respect to the level of uncertainties in the spectral measure for
$\delta_\unc = 0$ (dashed line), $0.2$ (circle marker), $0.3$ (square marker), and $0.4$ (no marker).}
\label{fig:figure3}
\end{figure}
\begin{table}[ht]
  \caption{Sensitivity of the probabilistic analysis of the RVE size with respect to the uncertainty level of the spectral measure.}\label{table1}
\begin{center}
 \begin{tabular}{|c|c|c|c|c|} \hline
 $\underline L_c$ & $\delta_\unc$ & $P\{0.98 < \Lambda_1 \leq 1.02\}$ & $P\{0.96 < \Lambda_1 \leq 1.04\}$   & $P\{0.92 < \Lambda_1 \leq 1.08\}$       \\
 \hline
        0.2      &       0.0     &      0.365                         &    0.655                                &   0.942                             \\
                 &       0.2     &      0.385                         &    0.683                                &   0.945                             \\
                 &       0.3     &      0.430                         &    0.721                                &   0.948                             \\
                 &       0.4     &      0.475                         &    0.705                                &   0.954                             \\
 \hline
        0.4      &       0.0     &      0.171                         &    0.332                                &   0.610                             \\
                 &       0.2     &      0.175                         &    0.344                                &   0.625                             \\
                 &       0.3     &      0.185                         &    0.360                                &   0.650                             \\
                 &       0.4     &      0.202                         &    0.395                                &   0.700                             \\
 \hline
        0.6      &       0.0     &      0.108                         &    0.230                                &   0.442                             \\
                 &       0.2     &      0.125                         &    0.240                                &   0.465                             \\
                 &       0.3     &      0.130                         &    0.260                                &   0.488                             \\
                 &       0.4     &      0.145                         &    0.280                                &   0.526                             \\
 \hline
  \end{tabular}
\end{center}
\end{table}

\noindent {\textit{(viii) Brief discussion}}. When there are no uncertainties in the spectral measure ($\delta_\unc = 0$), Figure~\ref{fig:figure3} and Table~\ref{table1} shows that the condition to obtain a scale separation is not really obtained because, for $\underline L_c = 0.2$ and $\delta_\unc = 0$ it can be seen that
$\hbox{Proba}\{0.98 < \Lambda_1 \leq 1.02\} = 0.365$ and the probability becomes $0.942$ only for $\{0.92 < \Lambda_1 \leq 1.08\}$.
The results show in Table~\ref{table1} shows that, for the specific case analyzed (in particular choosing the same level of uncertainties for the spatial correlation lengths and for the values of the spectral density function) and contrary to what was expected, the introduction of spectral measure uncertainties improves the scale separation from a probabilistic analysis point of view.
In fact, for $\underline L_c= 0.2$ and for $\delta_{L_c}= \delta_\unc =0.4$, the minimum value of the realizations of the random correlation length is $0.06$, value less than $10$ percent of the characteristic length of the dimensions of domain $\Omega$, for which the scale separation can be obtained.
It should be noted that these results are presented as an illustration of the use of the proposed mathematical construction of a random field  with uncertainties in the spectral measure in order to improve the probabilistic analysis of stochastic homogenization of random elastic media.
More advanced computational analyses should be performed with this probabilistic model in order to deeply analyze the role played by uncertain spectral measure for stochastic homogenization of random elastic media, in particular in taking different values of uncertainties for the spatial correlation lengths and for the spectral density function.
%


\end{document}